\providecommand{\algorithmname}{Algorithm}
\begin{document}
\newtheorem{thm}{Theorem}[section]
\newtheorem*{thm*}{Theorem}%[section]
\newtheorem{lem}[thm]{Lemma}
\newtheorem{prop}[thm]{Proposition}
\newtheorem{cor}[thm]{Corollary}
\newtheorem*{conj}{Conjecture}%[section]
\newtheorem{proj}[thm]{Project}%[section]
\newtheorem{question}[thm]{Question}
\newtheorem{rem}{Remark}[section]
\newtheorem*{lemma*}{Lemma}
\newtheorem*{corollary*}{Corollary}

\theoremstyle{definition}
\newtheorem{defn}{Definition}
\newtheorem*{remark}{Remark}
\newtheorem{exercise}{Exercise}
\newtheorem*{exercise*}{Exercise}
\numberwithin{equation}{section}

\title[Asymptotic Independence For Surface Groups]{Asymptotic Independence for Random Permutations from Surface Groups}
\author{Yotam Maoz}
\address{School of Mathematical Sciences, Tel Aviv University, Tel Aviv 69978, Israel}
\email{yotammaoz@mail.tau.ac.il}
\date{\today}

\begin{abstract}
    Let $X$ be an orientable hyperbolic surface of genus $g\geq 2$ with a marked point $o$,
    and let $\Gamma$ be an orientable hyperbolic surface group isomorphic to $\pi_{1}(X,o)$.
    Consider the space $\text{Hom}(\Gamma,S_{n})$ which corresponds to
    $n$-sheeted covers of $X$ with labeled fiber.
    Given $\gamma\in\Gamma$ and a uniformly random $\phi\in\text{Hom}(\Gamma,S_{n})$,
    what is the expected number of fixed points of $\phi(\gamma)$?

    Formally, let $F_{n}(\gamma)$ denote the number of fixed points of $\phi(\gamma)$ for a uniformly random $\phi\in\text{Hom}(\Gamma,S_{n})$.
    We think of $F_{n}(\gamma)$ as a random variable on the space $\text{Hom}(\Gamma,S_{n})$.
    We show that an arbitrary fixed number of products of the variables $F_{n}(\gamma)$ are asymptotically independent as $n\to\infty$ when there are no obvious obstructions.
    We also determine the limiting distribution of such products.
    Additionally, we examine short cycle statistics in random permutations of the form $\phi(\gamma)$ for a uniformly random $\phi\in\text{Hom}(\Gamma,S_{n})$.
    We show a similar asymptotic independence result and determine the limiting distribution.
\end{abstract}

\keywords{Surface groups, Random covers, Random permutations, Asymptotic independence}

\maketitle

\pagebreak
\tableofcontents
\pagebreak
\section{Introduction}
Let $X$ be a compact hyperbolic surface with marked point $o$ of genus $g\geq 2$. In addition, let its fundamental group be
\[
    \Gamma = \left\langle a_{1},b_{1},...,a_{g},b_{g}|[a_{1},b_{1}]...[a_{g},b_{b}]\right\rangle.
\]
The main focus of this paper is the space $\text{Hom}(\Gamma,S_{n})$ consisting of all
group homomorphisms from $\Gamma$ to $S_{n}$ \textemdash the symmetric group of permutations of $\{1,...,n\}$.
This space admits a topological interpretation. Indeed, each $\phi \in \text{Hom}(\Gamma,S_{n})$ corresponds to a
degree-$n$ covering of $X$ denoted by $X_{\phi}$. In general, for a point $o\in X$ there is a bijection between
$n$-sheeted covers $p:\hat{X}\to X$ of $X$ with labeled fiber $p^{-1}(o)=\{1,...,n\}$ and elements of the space $\text{Hom}(\Gamma,S_{n})$.
This foundational bijection will be discussed further in Section \ref{bk}.

As the space $\text{Hom}(\Gamma,S_{n})$ is finite\footnote{One can show that for a finite group $G$ we have the formula:
\[
    \#\text{Hom}(\Gamma,G) = |G|^{2g-1}\zeta^{G}(2g-2),
\]
where $\zeta^{G}(s) = \sum_{\rho\in \text{Irrep}(G)}\dim(\rho)^{-s}$
is the Witten zeta function of $G$ and $\text{Irrep}(G)$ denotes the set of complex irreducible representations of $G$. See \cite[Proposition 3.2]{LS}.
}, we endow it with the uniform probability measure and view it as a probability space.
Given a random variable $T$ on this space, we define the expected value operator as:
\[
    \mathbb{E}_{n}[T]=\frac{1}{\#\text{Hom}(\Gamma,S_{n})}\sum_{\phi\in\text{Hom}(\Gamma,S_{n})}T(\phi).
\]
Thus, we have a notion of a random cover of the space $X$.
In particular, fixing $X$ and $n$, one obtains a notion of a random surface.

Over time, there have been a few works regarding the probability space $\text{Hom}(\Gamma,S_{n})$.
For example, Liebeck and Shalev \cite[Theorem 1.12]{LS} showed that for a uniformly random $\phi \in \text{Hom}(\Gamma,S_{n})$ a.a.s., i.e. with probability
tending to $1$ as $n\to\infty$, we have $A_{n}\subseteq Im(\phi)$. The image of a random $\phi\in \text{Hom}(\Gamma,S_{n})$
is a.a.s. transitive and so the corresponding cover $X_{\phi}$ is a.a.s. connected.

Let $l\subseteq X$ be a loop based at $o$, that is, a map $l:[0,1]\to X$ such that $l(0)=l(1)=o$. In addition let $p:\hat{X}\to X$ be an $n$-cover of $X$ with labeled fiber $p^{-1}(o) = \{1,...,n\}$.
For each $i\in \{1,...,n\}$ one can lift $l$ to a path $l_{i}$ in $\hat{X}$ based at $i$, that is, for every $i\in \{1,...,n\}$ one can find a map $l_{i}:[0,1]\to \hat{X}$ such that $l_{i}(0)=i$ and $l = p\circ l_{i}$.
Recently,  Magee and Puder \cite{MP2,MP1} considered the following natural question one could ask of the space $\text{Hom}(\Gamma,S_{n})$:

\begin{question}\label{q org}
    Fix a loop $l \subseteq X$ based at $o$. For a random $n$-cover $X_{\phi}$ of $X$, what is the expected number of times the lift $l_{i}$ is itself a loop in the cover?
\end{question}

As the space $\text{Hom}(\Gamma,S_{n})$ corresponds to $n$-covers of $X$ with labeled fiber, the above question has an algebraic equivalent:
\begin{question}\label{q}
    For a fixed $\gamma\in \Gamma$,
    what is the expected number of fixed points of the permutation $\phi(\gamma)$ as $\phi$ varies over the space $\text{Hom}(\Gamma,S_{n})$?
\end{question}

Let us now formalize the above and give a bit of notation.
Given a permutation $\sigma\in S_{n}$ we denote by $\text{Fix}(\sigma)$ its set of fixed points when acting on $\{1,...,n\}$.
Each $\gamma \in \Gamma$ defines a random variable on the space $\text{Hom}(\Gamma,S_{n})$ by $\#\text{Fix}(\phi(\gamma))$.
We denote this random variable by $F_{n}(\gamma)$.
One can generalize the variables $F_{n}(\gamma)$. Each pair $(\gamma,d)$, where $\gamma \in \Gamma$ and $d$ is a positive integer, defines a random variable on the space which we denote by $C_{n,d}(\gamma)$.
As a random variable, $C_{n,d}(\gamma)$ is the number of $d$-cycles in the permutation $\phi(\gamma)$ for a random $\phi \in \text{Hom}(\Gamma,S_{n})$.
In particular $C_{n,1}(\gamma) = F_{n}(\gamma)$.

While trying to tackle Question \ref{q}, one notes some important observations:
\begin{enumerate}
    \item
          If $\gamma = \gamma_{0}^{a}$ for some $\gamma_{0}\in \Gamma$ and some positive integer power $a$, then:
          \[
              F_{n}(\gamma) = \sum_{1\leq d|a}dC_{n,d}(\gamma_{0}).
          \]

    \item
          The values of $F_{n}(\gamma)$ and of $C_{n,d}(\gamma)$ depend only on the conjugacy class of $\gamma$ in $\Gamma$, as conjugate elements in $S_{n}$ have the same cycle structure.

    \item
          We have:
          \[
              F_{n}(\gamma) = F_{n}(\gamma^{-1}),
          \]
          as a permutation and its inverse have the same cycle structure.
\end{enumerate}

Due to these observations, we denote by $\mathcal{P}$
the set of primitive conjugacy classes in $\Gamma$ different from the identity.
By primitive we mean that they are conjugacy classes of non-power elements.
We let $\mathcal{P}_{0} = \mathcal{P}/\sim$ where $[\gamma] \sim [\delta]$ iff $\gamma = \delta$ or $\gamma = \delta^{-1}$.
These sets also admit nice topological/geometric interpretations.
The set $\mathcal{P}$ corresponds to primitive oriented geodesics on $X$ while $\mathcal{P}_{0}$ corresponds to primitive non-oriented geodesics on $X$.
Using some abuse of notation, we write $\gamma \in \mathcal{P}$ or $\gamma \in \mathcal{P}_{0}$ to mean that the conjugacy class of $\gamma$
is an element of $\mathcal{P}$ respectively $\mathcal{P}_{0}$. With this notation in mind, note that saying that two elements $\gamma,\delta\in \mathcal{P}_{0}$ are
distinct means that $\gamma$ is not conjugate to $\delta$ or $\delta^{-1}$. This will come up a lot in our main theorems.

\subsection{Recent Progress}
As for the state of Question \ref{q}, we know quite a lot.
Recently, Puder and Magee \cite[Theorem 1.2]{MP1} gave the following result:
\begin{thm}\label{basic}
    For $1\neq \gamma \in \Gamma$ write $\gamma = \gamma_{0}^{a}$ for primitive $\gamma_{0}\in \Gamma$ and $a\in\mathbb{Z}_{\geq 1}$, then as $n\to \infty$:
    \[
        \mathbb{E}_{n}[F_{n}(\gamma)] =  d(a) + O_{\gamma}(1/n),
    \]
    where $d(a)$ is the number of positive divisors of $a$.
\end{thm}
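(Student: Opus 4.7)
The plan is to start with the $S_{n}$-symmetrization
\[
    \mathbb{E}_{n}[F_{n}(\gamma)] = n\cdot \Pr_{\phi}[\phi(\gamma)(1)=1],
\]
which follows by double-counting pairs $(\phi, i)$ with $\phi(\gamma)(i)=i$. The event $\phi(\gamma)(1)=1$ says that the lift of $\gamma$ to $X_{\phi}$ based at sheet $1$ closes up to a loop, which happens inside the pointed connected component of $X_{\phi}$ at that sheet. I would then group the probability according to the isomorphism class of the minimal pointed tiled sub-surface $(Y, o_{Y})$ supporting this lifted loop --- in the spirit of the core-surface techniques of Magee--Puder --- to write
\[
    \mathbb{E}_{n}[F_{n}(\gamma)] = \sum_{(Y, o_{Y})} \frac{\#\bigl\{(\phi,i) : \text{core at } i \cong (Y, o_{Y})\bigr\}}{\#\mathrm{Hom}(\Gamma, S_{n})},
\]
summing over pointed tiled surfaces in which $\gamma$ lifts to a loop at $o_{Y}$.

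For each such $(Y, o_{Y})$ with $k$ sheets over $o$, the numerator factorizes as a choice of $k$ labels in $\{1, \dots, n\}$, a combinatorial factor involving $|\mathrm{Aut}(Y, o_{Y})|$, and a free completion on the remaining $n-k$ labels contributing $\#\mathrm{Hom}(\Gamma, S_{n-k})$. Invoking the footnote's formula $\#\mathrm{Hom}(\Gamma, S_{m}) = (m!)^{2g-1}\zeta^{S_{m}}(2g-2)$, together with the fact that $\zeta^{S_{m}}(2g-2) = 2 + o(1)$ as $m \to \infty$ (driven by the trivial and sign representations, and convergent because $2g - 2 \geq 2$), one extracts the key asymptotic
\[
    \frac{\#\mathrm{Hom}(\Gamma, S_{n-k})}{\#\mathrm{Hom}(\Gamma, S_{n})} = n^{-k(2g-1)}\bigl(1 + O(1/n)\bigr).
\]
Combined with $n!/(n-k)! \sim n^{k}$, this shows that each tiled surface contributes to leading order $\Theta(n^{-k(2g-2)}) = \Theta(n^{\chi(Y)})$, up to bounded combinatorial constants.

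Since $g \geq 2$, the contribution $n^{\chi(Y)}$ is of order $1$ precisely when $\chi(Y) = 0$, and is $O(1/n)$ otherwise. For $\gamma = \gamma_{0}^{a}$ with $\gamma_{0}$ primitive, the tiled surfaces with $\chi = 0$ in which $\gamma$ lifts to a loop at the base point should be exactly the open annuli $A_{d}$ whose core geodesic is $\gamma_{0}^{d}$, one for each divisor $d \mid a$ --- in $A_{d}$ the loop $\gamma_{0}^{a}$ wraps $a/d$ times around the core before returning. Each $A_{d}$ contributes $1$, summing to $d(a)$. The principal obstacle --- and the technical heart of the argument --- is the tail estimate: I would need to show that the sum over the infinite family of tiled surfaces with $\chi < 0$ is genuinely $O(1/n)$, with a constant depending only on $\gamma$. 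This requires uniform control of $\zeta^{S_{m}}(2g-2)$ over all $m$ and a careful count of the number of pointed tiled surfaces of given negative Euler characteristic in which $\gamma$ lifts to a loop, blending representation-theoretic character bounds on $S_{n}$ with the combinatorial geometry of tiled surfaces.
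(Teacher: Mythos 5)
The paper itself does not prove Theorem~\ref{basic}; it cites it as Theorem~1.2 of \cite{MP1}, and the machinery it recaps in Section~\ref{bk} (resolutions, Lemma~\ref{res and exp}, and especially Theorem~\ref{asym exp}) is the engine behind the Magee--Puder proof. Your high-level strategy --- expand $\mathbb{E}_n[F_n(\gamma)]$ over the tiled surfaces into which the loop for $\gamma$ can map, and isolate the $\chi = 0$ contributions as annuli indexed by divisors of $a$ --- is the right shape, and your identification of the leading-order objects and of the count $d(a)$ matches the paper's framework. The trouble is in the per-surface contribution estimate, which is the actual mathematical content of the theorem.

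The claim that extending an embedding of a $k$-vertex tiled surface $Y$ to a full degree-$n$ cover contributes ``a free completion on the remaining $n-k$ labels giving $\#\mathrm{Hom}(\Gamma, S_{n-k})$'' is false. That factorization holds when $Y$ is itself a full $k$-cover of $X$ (so that the remaining sheets are genuinely unconstrained), but tiled sub-surfaces like the annuli $A_d$ are not full covers: extending them requires filling in edges and $4g$-gons subject to the surface-group relation, and this is a heavily constrained count, not a free one. Your own arithmetic exposes the gap: you derive a per-surface exponent $n^{-k(2g-2)}$, but for an annulus with $k\geq 1$ vertices and $g\geq 2$ this is at most $n^{-2}$, which decays --- yet in the next paragraph you need the annuli to contribute $\Theta(1)$. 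The resolution is that the correct exponent is $\chi^{\mathrm{grp}}(Y)$ (the Euler characteristic of $\pi_1^{\mathrm{lab}}(Y)$, equal to $0$ for an annulus since $\pi_1^{\mathrm{lab}} \cong \mathbb{Z}$), not $-k(2g-2)$; these two quantities coincide only for full $k$-covers. Establishing that $\mathbb{E}_Y^{\mathrm{emb}}(n) = n^{\chi^{\mathrm{grp}}(Y)}(a_0(Y) + O(1/n))$ is precisely Theorem~\ref{asym exp} of the paper, and it is a deep result requiring the representation theory of $S_n$ and the boundary-reduction/core-surface machinery of \cite{MP1,MP2}; it is not a consequence of the $\mathrm{Hom}$-ratio heuristic. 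So the ``principal obstacle'' you flag (the tail over $\chi<0$ surfaces) is real, but the leading-order term is equally an obstacle and your proposal does not handle either without importing the asymptotic expansion theorem you are implicitly trying to rederive.
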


Even more recently, Puder and Zimhony gave the following stronger result:
\begin{thm}[Corollary 1.7 of \cite{PZ}]\label{tpz}
    For $1\neq \gamma \in \Gamma$ write $\gamma = \gamma_{0}^{a}$ for primitive $\gamma_{0}\in \Gamma$  and $a\in\mathbb{Z}_{\geq 1}$, then as $n\to \infty$:
    \[
        F_{n}(\gamma) \xrightarrow{\text{dis}} \sum_{1\leq d|a}dZ_{1/d},
    \]
    where the $Z_{\lambda}$-s are independent Poisson random variables with parameters $\lambda$, and $``\xrightarrow{\text{dis}}"$ denotes convergance in distribution.
\end{thm}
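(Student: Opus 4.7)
The plan is to establish the claimed Poisson convergence by the method of moments. By observation (1) and the continuous mapping theorem, it suffices to prove joint convergence of the cycle counts
\[
    (C_{n,d}(\gamma_{0}))_{d\mid a}\xrightarrow{\text{dis}}(Z_{1/d})_{d\mid a},
\]
for independent Poisson variables $Z_{1/d}$. Since a vector of independent Poissons is uniquely determined by its joint factorial moments, which are products $\prod_{d\mid a}(1/d)^{k_{d}}$, the task reduces to showing
\[
    \mathbb{E}_{n}\!\left[\prod_{d\mid a}(C_{n,d}(\gamma_{0}))_{k_{d}}\right]\longrightarrow \prod_{d\mid a}(1/d)^{k_{d}}
\]
for every tuple $(k_{d})$ of non-negative integers, where $(x)_k = x(x-1)\cdots(x-k+1)$ denotes the falling factorial.

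Next, I would reduce these moments to moments of the fixed-point variables $F_{n}(\gamma_{0}^{d})$. Observation (1) gives $F_{n}(\gamma_{0}^{d})=\sum_{d'\mid d}d'\,C_{n,d'}(\gamma_{0})$, which inverts to $d\cdot C_{n,d}(\gamma_{0})=\sum_{d'\mid d}\mu(d/d')\,F_{n}(\gamma_{0}^{d'})$ via M\"obius inversion over the divisor lattice. Expanding the falling factorials as polynomials in the $C_{n,d}$ and substituting, the problem reduces to evaluating mixed moments
\[
    \mathbb{E}_{n}\!\left[F_{n}(\gamma_{0}^{m_{1}})\cdots F_{n}(\gamma_{0}^{m_{r}})\right]
    =\sum_{(i_{1},\ldots,i_{r})\in\{1,\ldots,n\}^{r}}\mathbb{P}_{n}\!\left[(\phi(\gamma_{0}))^{m_{j}}(i_{j})=i_{j},\ 1\leq j\leq r\right],
\]
where $m_{1},\ldots,m_{r}$ are divisors of $a$ (possibly repeated). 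These sums can be stratified by the set partition recording which coordinates $i_{j}$ coincide; heuristically, the dominant contribution comes from tuples of distinct $i_{j}$'s lying in disjoint short $\phi(\gamma_{0})$-orbits.

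The heart of the proof is to extend Theorem~\ref{basic} from the single expectation $\mathbb{E}_{n}[F_{n}(\gamma)]$ to these joint probabilities. Concretely, one needs an asymptotic statement showing that for $k$ distinct base points the joint probability that each $i_{j}$ is fixed by $(\phi(\gamma_{0}))^{m_{j}}$ factorizes, to leading order in $1/n$, as the product of the corresponding single-point probabilities. This is the principal obstacle, since the Magee--Puder machinery underlying Theorem~\ref{basic} is organized around algebraic extensions of $\Gamma$ attached to a single element, whereas the joint analysis must track all interactions between the different $i_{j}$ and their respective orbit constraints. I expect this to be carried out through a careful Euler-characteristic analysis on the relevant moduli of partial covers of $X$, supplemented by a combinatorial argument showing that collision strata (where some $i_{j}$ coincide or share a common short orbit) contribute only $o(1)$ to the factorial moments in question.

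Once the joint moments of the $F_{n}(\gamma_{0}^{m})$ are controlled to leading order, M\"obius inversion converts them back into factorial moments of the $C_{n,d}(\gamma_{0})$. A direct combinatorial check then matches the limit with $\prod_{d\mid a}(1/d)^{k_{d}}$, the joint factorial moments of independent Poissons $Z_{1/d}$. As the Poisson distribution is determined by its moments, this convergence of joint factorial moments upgrades to convergence in distribution, and pushing forward by the linear map $(c_{d})_{d\mid a}\mapsto \sum_{d\mid a}d\,c_{d}$ recovers Theorem~\ref{tpz}.
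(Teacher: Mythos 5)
Theorem~\ref{tpz} is quoted from \cite{PZ}; the paper does not reprove it but uses it (together with the related single-element cycle result, Theorem~\ref{cycle thm}, also from \cite{PZ}) as a black box when deriving the multi-element generalizations. So there is no ``paper's own proof'' of this exact statement to compare against. That said, the paper's route to its generalization Theorem~\ref{dist thm} follows precisely the outline you propose: pass to the cycle-count vector $(C_{n,d}(\gamma_0))_{d\mid a}$, whose Poisson targets are moment-determinate (thereby sidestepping the moment-indeterminacy that would kill a direct method-of-moments attack on $\sum_{d\mid a} d\,Z_{1/d}$ --- a pitfall the paper explicitly flags in Section~\ref{prove dist cor}); apply the multivariate method of moments; use M\"obius inversion in both directions to translate between the $C_{n,d}$ and the $F_{n}(\gamma_0^m)$; and finally push forward through the continuous map $(c_d)\mapsto\sum d\,c_d$. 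On these structural points your plan is sound and aligned with how the paper organizes its own arguments.

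The gap is where you yourself locate it. The step you call ``the heart of the proof'' --- obtaining the asymptotics of the mixed moments $\mathbb{E}_n\bigl[F_n(\gamma_0^{m_1})\cdots F_n(\gamma_0^{m_r})\bigr]$ --- is left as a heuristic. You propose to sum over base points $(i_1,\dots,i_r)$ and stratify by which coincide, asserting that collision strata contribute $o(1)$; but you offer no mechanism for actually estimating the joint probabilities in a non-free setting, and a surface group (unlike a free group) has the surface relation, so the combinatorics of ``disjoint short orbits'' do not reduce to anything elementary. The rigorous instantiation of the ``Euler-characteristic analysis on moduli of partial covers'' you gesture at is the subcover/resolution machinery recalled in Section~\ref{bk}: one encodes $\prod_j F_n(\gamma_0^{m_j})$ as the number of lifts of a disjoint union of the cycle subcovers $Y_{\gamma_0^{m_j}}$, runs a (strongly-boundary-reduced) resolution, invokes the asymptotic expansion $\mathbb{E}^{\text{emb}}_Y(n)=n^{\chi^{\text{grp}}(Y)}(a_0(Y)+O(1/n))$, and observes that only codomains with $\chi^{\text{grp}}=0$ survive, all of which then have infinite-cyclic $\pi_1^{\text{lab}}$ and hence $a_0=1$ in the SBR case. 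Without either importing that framework or working out a genuine substitute, the proposal stops at an outline; the decisive estimate is not proved.
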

From now on $Z_{\lambda}$ will always denote a Poisson random variable with parameter $\lambda$.
In addition, whenever we write $\sum_{d|N}$, we mean that the sum is over the positive divisors of the positive integer $N$, i.e. we sum over $d|N$ such that $1\leq d$.

As can be seen, the bound on the error in Theorem \ref{basic} depends on the element $\gamma$.
This sometimes causes problems when trying to bound the error over a range of $\gamma$-s. Magee and Puder, together with
Naud, in \cite{MPN} give the following uniform bound on short $\gamma$-s:
\begin{thm}[Theorem 1.11 in \cite{MPN}]\label{unif}
    There is a constant $A$ (dependent on $g$) such that for any $c>0$, if $1\neq \gamma \in \Gamma$ is primitive and its word length is at most $c\log(n)$ then:
    \[
        \mathbb{E}_{n}[F_{n}(\gamma)] = 1 + O_{c,g}\left(\frac{(\log n)^{A}}{n}\right).
    \]
    The word length of an element $\gamma \in \Gamma$ is the length of a shortest word on the generators $a_{1},b_{1},...,a_{g},b_{g}$ and their inverses representing $\gamma$.
    In particular, the error does not depend on $\gamma$ as long as its word length is $\leq c\log(n)$.
\end{thm}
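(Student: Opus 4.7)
The plan is to prove Theorem \ref{unif} via the tiled-surface expansion of $\mathbb{E}_n[F_n(\gamma)]$ developed by Magee and Puder in the papers preceding this one. The starting point is a finite expansion of the form
\[
\mathbb{E}_n[F_n(\gamma)] \;=\; \sum_{Y}\,\mathcal{L}(Y,\gamma)\,n^{\chi(Y)}\bigl(1+O(n^{-1})\bigr),
\]
indexed by isomorphism classes of tiled surfaces $Y$ equipped with a distinguished boundary cycle labeled by $\gamma$. Here $\chi(Y)$ is the relevant Euler characteristic and $\mathcal{L}(Y,\gamma)$ counts morphisms of a specified type. This expansion is the analogue for $\Gamma$ of the classical free-group word map expansions and is already available in the Magee--Puder framework.

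The first step is to isolate the main term. Because $\gamma$ is primitive, the only $Y$ with $\chi(Y)=0$ that admits $\gamma$ as a boundary is the annulus, and its contribution is exactly $1$. Every remaining tiled surface has $\chi(Y)\leq -1$, so each contributes $O(n^{-1})$ individually. The whole game is therefore to sum these error contributions uniformly over $\gamma$.

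The heart of the proof — and the main obstacle — is to establish effective combinatorial estimates that are polynomial in the word length $\ell=\ell(\gamma)$. Specifically, one needs: (i) a bound of the form $O_{k,g}(\ell^{B(k,g)})$ on the number of isomorphism classes of tiled surfaces $Y$ with $\chi(Y)=-k$ that can support $\gamma$ as a boundary loop, and (ii) a bound of the same polynomial shape on the coefficients $\mathcal{L}(Y,\gamma)$ and on the implicit constants in each $O(n^{-1})$ above. The key elementary input for (i) is that a tiled surface with $\chi=-k$ and one boundary loop of length $\ell$ has at most $O_k(\ell)$ faces, edges, and vertices, and hence is determined by polynomially-in-$\ell$ many combinatorial choices; item (ii) follows from a similar bookkeeping on the gluing data.

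Combining (i) and (ii) and summing the geometric-type series in $k$ produces a total error of the shape $C_g\,\ell^{A}/n$ with $A=A(g)$. Specializing to $\ell \leq c\log n$ then converts the polynomial-in-$\ell$ bound into the claimed $O_{c,g}\bigl((\log n)^{A}/n\bigr)$, uniformly over all primitive $\gamma$ of length at most $c\log n$. The expansion and the identification of the main term are routine once the Magee--Puder machinery is set up; the genuinely difficult step is making every implicit constant in the expansion effective in $\ell$, which is where the bulk of the technical work resides.
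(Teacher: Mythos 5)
This theorem is not proved in the paper at hand: it is quoted verbatim as Theorem~1.11 of Magee--Puder--Naud \cite{MPN} and used as a black box. So there is no ``paper's own proof'' to compare against; the only thing to assess is whether your sketch is a plausible account of what the proof in \cite{MPN} must do.

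Your outline captures the right high-level picture---a tiled-surface/resolution expansion in powers of $n$, the annulus giving the main term $1$ for primitive $\gamma$, and the need for bounds that are polynomial in the word length $\ell$ rather than merely $O_\gamma(1/n)$---but it compresses what is genuinely a long and delicate argument into a few lines that do not yet constitute a proof. Two concrete points of concern. First, the expansion you write down, $\mathbb{E}_n[F_n(\gamma)]=\sum_Y \mathcal{L}(Y,\gamma)\,n^{\chi(Y)}(1+O(n^{-1}))$, is not quite what the Magee--Puder machinery gives: what one actually has is $\mathbb{E}_n[F_n(\gamma)] = \sum_{f\in R}\mathbb{E}^{\mathrm{emb}}_{Z_f}(n)$ over a resolution of $Y_\gamma$, where each $\mathbb{E}^{\mathrm{emb}}_{Z_f}(n)$ itself has an asymptotic expansion with $\gamma$-dependent coefficients and error terms, and controlling the totality of these uniformly is not a matter of bookkeeping. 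Second, your item (i)---counting tiled surfaces of fixed $\chi=-k$ with boundary $\gamma$---and item (ii)---bounding coefficients---are stated as ``elementary'' and ``similar bookkeeping,'' but making them quantitative with explicit polynomial dependence on $\ell$, and then summing over $k$ so that the total error is $\ell^{A}/n$ rather than something worse, is essentially the entire content of \cite{MPN}; they need, among other things, representation-theoretic estimates for $S_n$ and careful control of how the growing-process/resolution combinatorics scales with $\ell$. You do acknowledge this is ``where the bulk of the technical work resides,'' which is honest, but as written the sketch identifies the target rather than supplying a proof. If you want to actually prove the theorem, the thing to do is read \cite{MPN}; if you only want to \emph{use} it, as this paper does, then no proof is required and the citation suffices.
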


As for the variables $C_{n,d}(\gamma)$, Puder and Zimhony give a general result that when applied to surface groups, readily gives the following result.
Note that variables being asymptotically independent means that they jointly converge in distribution towards independent random variables.
\begin{thm}[Theorem 1.15 in \cite{PZ}]\label{cycle thm}
    For $1\neq \gamma \in \Gamma$ primitive we have:
    \begin{itemize}
        \item
              For all $d\in\mathbb{Z}_{\geq 1}$ we have $C_{n,d}(\gamma)\xrightarrow{\text{dis}}Z_{1/d}$.
        \item
              $C_{n,1}(\gamma),C_{n,2}(\gamma),...,C_{n,t}(\gamma)$ are asymptotically
              independent for fixed $t$.
    \end{itemize}
\end{thm}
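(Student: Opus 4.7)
The plan is to establish the joint distributional convergence
$(C_{n,1}(\gamma), \ldots, C_{n,t}(\gamma)) \xrightarrow{\text{dis}} (Z_1, Z_{1/2}, \ldots, Z_{1/t})$
with independent Poisson marginals, which gives both bullets of the theorem simultaneously. The standard tool is the method of joint factorial moments: the target convergence is equivalent to
\[
\mathbb{E}_n\Bigl[\prod_{d=1}^{t}(C_{n,d}(\gamma))^{(k_d)}\Bigr] \xrightarrow{n \to \infty} \prod_{d=1}^{t}(1/d)^{k_d}
\]
for every tuple $(k_1, \ldots, k_t)$ of non-negative integers, where $X^{(k)} = X(X-1)\cdots(X-k+1)$ is the $k$-th falling factorial.

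The first step is to reduce cycle counts to fixed-point counts of powers of $\gamma$. For any $\sigma \in S_n$, an $e$-cycle of $\sigma$ contributes $e$ fixed points to $\sigma^d$ exactly when $e \mid d$, which gives the identity
\[
F_n(\gamma^d) = \#\text{Fix}(\phi(\gamma)^d) = \sum_{e \mid d} e \cdot C_{n,e}(\gamma),
\]
and M\"obius inversion yields $d \cdot C_{n,d}(\gamma) = \sum_{e \mid d} \mu(d/e) F_n(\gamma^e)$. Thus $(C_{n,d})_{d \leq t}$ is an invertible deterministic linear function of $(F_n(\gamma^e))_{e \leq t}$, and joint distributional convergence of one tuple is equivalent, via the continuous mapping theorem, to that of the other.

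It therefore suffices to upgrade the marginal convergence given by Theorem \ref{tpz} to the joint convergence
\[
(F_n(\gamma^e))_{e=1}^{t} \xrightarrow{\text{dis}} \Bigl(\sum_{d \mid e} d\, Z_{1/d}\Bigr)_{e=1}^{t},
\]
where crucially the \emph{same} independent Poissons $(Z_1, Z_{1/2}, \ldots, Z_{1/t})$ are used for every $e$. Since the proposed limit is a fixed linear image of an independent Poisson tuple, its distribution is determined by its mixed factorial moments, so by the method of moments it is enough to prove
\[
\mathbb{E}_n\Bigl[\prod_{j=1}^{k} F_n(\gamma^{a_j})\Bigr] \xrightarrow{n \to \infty} \mathbb{E}\Bigl[\prod_{j=1}^{k} \sum_{d \mid a_j} d\, Z_{1/d}\Bigr]
\]
for all tuples $(a_1, \ldots, a_k)$ with $a_j \leq t$. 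Such joint moments are accessible through the graph-theoretic and representation-theoretic machinery underlying Theorems \ref{basic} and \ref{tpz}, in which moments of fixed-point counts are expressed as sums over morphisms of labelled graphs (equivalently, finite-index subgroups of $\Gamma$ with prescribed behaviour), with leading contributions controlled by Euler-characteristic considerations.

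The main obstacle is precisely this joint-moment estimate when the exponents $a_j$ share common divisors. For a single factor, primitivity of $\gamma$ ensures that the dominant contributions come from divisors of $a_j$, giving the limit of Theorem \ref{tpz}. For a product, one must track how different powers can ``share'' inherited cycles without producing unexpected corrections of order $\Theta(1)$ that would break the independent Poisson structure. Verifying that the surviving $\Theta(1)$ contributions are exactly those prescribed by the joint Poisson limit, with all other configurations suppressed by at least a factor of $1/n$, is where the primitivity of $\gamma$ plays its essential role.
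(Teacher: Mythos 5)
The paper does not actually prove Theorem~\ref{cycle thm}: it is imported as a known result from \cite{PZ} and then consumed as an input, most visibly in the proof of Corollary~\ref{moments cor}, where its joint moment convergence is invoked directly. So there is no in-paper argument to compare your proposal against; the closest thing is the paper's later proof of the generalization Theorem~\ref{cyc ext}, which is built \emph{on top of} Theorem~\ref{cycle thm} rather than reproving it.

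Judged on its own, your outline assembles the correct probabilistic scaffolding and closely mirrors what the paper does in Section~\ref{prove dist cor}: pass between cycle counts and fixed-point counts via the M\"obius inversion $d\,C_{n,d}(\gamma)=\sum_{e\mid d}\mu(d/e)\,F_n(\gamma^e)$, and then invoke the multivariate method of moments (Theorem~\ref{moments}). Your observation that the limiting tuple $\bigl(\sum_{d\mid e} d\,Z_{1/d}\bigr)_{e\le t}$ is an invertible (lower-triangular) linear image of the independent Poisson tuple and therefore inherits moment-determinacy is correct, and it cleanly sidesteps the obstruction the paper raises for \emph{products} of $F_n$'s such as $Z_1^3$, whose moments grow too fast for Carleman-type tests. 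The paper's own version of this dodge is to establish Theorem~\ref{cyc ext} for the cycle-count tuple first (moment-determinate via Proposition~\ref{pos det mom}) and then push forward by the continuous mapping theorem in Theorem~\ref{dist thm}; your linear-image argument and the paper's are equivalent, just phrased on opposite sides of the invertible change of variables.

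The genuine gap is the joint moment estimate itself,
\[
\mathbb{E}_n\Bigl[\prod_{j=1}^{k}F_n(\gamma^{a_j})\Bigr]\;\longrightarrow\;\mathbb{E}\Bigl[\prod_{j=1}^{k}\sum_{d\mid a_j} d\,Z_{1/d}\Bigr],
\]
which carries essentially all of the content. You correctly flag this as the main obstacle and note that primitivity must kill the unwanted $\Theta(1)$ cross-terms, but you only gesture at ``the graph-theoretic and representation-theoretic machinery'' without executing the computation. This is not a corollary of the univariate Theorem~\ref{tpz}: it requires the apparatus of subcovers, (embedding-)resolutions, the $\chi^{\text{grp}}=0$ dichotomy, and the asymptotic expansion of $\mathbb{E}_Y^{\text{emb}}(n)$ from Section~\ref{bk}, together with a concrete analysis of which $Z_f$ with $\chi^{\text{grp}}(Z_f)=0$ survive and what $a_0(Z_f)$ they contribute when all exponents involve the \emph{same} primitive $\gamma$. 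As written, the proposal correctly reduces the theorem to that estimate but leaves it unproved—which is precisely the part that \cite{PZ} supplies and that this paper deliberately treats as a black box.
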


The question we seek to tackle in this paper is that of independence for the variables $F_{n}(\gamma)$. To this end, Puder and Zimhony give a general result that when applied to surface groups, yields the following:
\begin{thm}[Theorem 1.14 in \cite{PZ}]\label{indep}
    Let $1\neq \gamma,\delta \in \Gamma$ and set $\gamma = \gamma_{0}^{a}$ and $\delta = \delta_{0}^{b}$ for primitive $\gamma_{0},\delta_{0}\in \Gamma$ and $a,b\in \mathbb{Z}_{\geq 1}$.
    TFAE:
    \begin{enumerate}
        \item
              The variables $F_{n}(\gamma)$ and $F_{n}(\delta)$ are asymptotically independent as \newline $n\to \infty$.
        \item
              $\gamma_{0}$ and $\delta_{0}$ are distinct as elements in $\mathcal{P}_{0}$, that is, $\gamma_{0}$ is not conjugate to $\delta_{0}$ or to $\delta_{0}^{-1}$.
        \item
              We have:
              \[
                  \mathbb{E}_{n}[F_{n}(\gamma)F_{n}(\delta)] = \mathbb{E}_{n}[F_{n}(\gamma)]\mathbb{E}_{n}[F_{n}(\delta)] + O_{\gamma,\delta}(1/n).
              \]
    \end{enumerate}
\end{thm}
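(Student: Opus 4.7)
The plan is to use the identity $F_n(\gamma) = \sum_{d \mid a} d\,C_{n,d}(\gamma_0)$ from observation (1) (and its analogue for $\delta$) to rewrite everything in terms of cycle counts of the primitive roots $\gamma_0, \delta_0$, and then to prove the two implications $\neg(2) \Rightarrow \neg(1) \wedge \neg(3)$ and $(2) \Rightarrow (1) \wedge (3)$ separately, which together give the three-way equivalence.

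For the first direction, suppose $\gamma_0$ is conjugate in $\Gamma$ to $\delta_0^{\pm 1}$. Since $\phi$ is a homomorphism, $\phi(\gamma_0)$ is conjugate in $S_n$ to $\phi(\delta_0)^{\pm 1}$; combined with observation (3) (a permutation and its inverse have the same cycle type), this forces $C_{n,d}(\gamma_0) = C_{n,d}(\delta_0)$ as random variables on $\mathrm{Hom}(\Gamma,S_n)$ for every $d \geq 1$. Hence $(F_n(\gamma), F_n(\delta))$ is a pair of linear combinations of the \emph{same} family $\{C_{n,d}(\gamma_0)\}_d$. Invoking Theorem \ref{cycle thm} (with the standard upgrade to convergence of low moments, which for Poisson limits is routine), the limiting pair is $\big(\sum_{d \mid a} d\,Z_{1/d},\ \sum_{d \mid b} d\,Z_{1/d}\big)$ with \emph{shared} Poissons, and the covariance computation
\[
\lim_{n\to\infty}\mathrm{Cov}\bigl(F_n(\gamma), F_n(\delta)\bigr) = \sum_{d \mid \gcd(a,b)} d\cdot d \cdot \tfrac{1}{d} = \sum_{d \mid \gcd(a,b)} d \geq 1
\]
is strictly positive. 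This simultaneously negates independence in (1) and the $O(1/n)$ rate in (3).

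For the second direction I would open up the joint moments via the Magee--Puder surface-group machinery that drives Theorems \ref{basic} and \ref{tpz}. Starting from
\[
\mathbb{E}_n[F_n(\gamma)F_n(\delta)] = \sum_{i,j=1}^{n} \Pr_\phi\bigl(\phi(\gamma)i=i,\ \phi(\delta)j=j\bigr),
\]
each joint probability is a representation-theoretic/topological count of covers carrying the prescribed marked fixed-point data. The diagonal $i=j$ contributes only $O_{\gamma,\delta}(1/n)$, since there are $n$ such terms and each probability is $O(1/n)$. For the $i \neq j$ terms, hypothesis (2) ensures that the two fixed-point constraints concern \emph{distinct} primitive geodesics on $X$, so the leading-Euler-characteristic configurations in the Magee--Puder expansion decouple into a product of two independent one-loop counts, giving $\mathbb{E}_n[F_n(\gamma)]\,\mathbb{E}_n[F_n(\delta)] + O_{\gamma,\delta}(1/n)$; this is (3). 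The analogous expansion at higher $(k,\ell)$-joint moments yields asymptotic factorization of every $\mathbb{E}_n[F_n(\gamma)^k F_n(\delta)^\ell]$, and since the Poisson-sum limit laws of Theorem \ref{tpz} are uniquely determined by their moments, this upgrades to joint distributional convergence to the product measure, proving (1).

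\textbf{The main obstacle} lies in extracting the quantitative $O(1/n)$ rate for the $(2)\Rightarrow(3)$ step: one must carefully identify which configurations realize the leading Euler characteristic in the Magee--Puder expansion of the two-loop count, and verify that the hypothesis $\gamma_0 \not\sim \delta_0^{\pm 1}$ in $\mathcal{P}_0$ exactly rules out the ``interacting'' configurations (those in which the two surface-word tiles share a subpath or retract onto a common geodesic) that would otherwise spoil factorization. This is where the combinatorial-group-theory/hyperbolic-geometry bridge developed in \cite{MP1,PZ} is used most essentially.
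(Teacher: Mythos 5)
First, a basic but important point: the paper does not prove Theorem \ref{indep} at all. It is imported verbatim from \cite{PZ} (cited there as Theorem 1.14), and the present paper's own contribution is the generalization Theorem \ref{main thm} together with Lemma \ref{imp lemma}. So there is no ``paper's own proof'' for your attempt to be compared against; what can be compared is your strategy versus the strategy the paper uses for the generalization.

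Your direction $\neg(2) \Rightarrow \neg(1) \wedge \neg(3)$ is the right idea: when $\gamma_0 \sim \delta_0^{\pm1}$, both $F_n(\gamma)$ and $F_n(\delta)$ are linear combinations of the same variables $C_{n,d}(\gamma_0)$, and the limiting covariance $\sum_{d \mid \gcd(a,b)} d \geq 1$ is strictly positive. However, ``with the standard upgrade to convergence of low moments, which for Poisson limits is routine'' is not routine. Convergence in distribution does not in general imply convergence of second moments; you need a uniform integrability or a direct moment-convergence statement (which \cite{PZ} does provide, but Theorem \ref{cycle thm} as quoted in this paper states only distributional convergence). Without that, your covariance computation is not licensed.

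Your direction $(2) \Rightarrow (3)$ has a concrete arithmetic error: you claim the diagonal $i=j$ contributes $O(1/n)$ because ``there are $n$ such terms and each probability is $O(1/n)$,'' but $n \cdot O(1/n) = O(1)$, not $O(1/n)$. To get $O(1/n)$ you would need each diagonal probability $\Pr(\phi(\gamma)i=i, \phi(\delta)i=i)$ to be $O(1/n^2)$, and that is precisely the kind of decorrelation statement the theorem is trying to establish, so the argument is circular as written. In the subcover language of the paper, a common fixed point of $\phi(\gamma)$ and $\phi(\delta)$ corresponds to a subcover whose $\pi_1^{\text{lab}}$ contains both $\gamma$ and $\delta$ up to conjugation; when $\gamma_0$ and $\delta_0$ are distinct in $\mathcal{P}_0$, such a subgroup cannot be infinite cyclic and hence has $\chi^{\text{grp}} \leq -1$, which is where the extra factor of $1/n$ comes from. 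This observation is exactly the heart of the paper's proof of Theorem \ref{main thm}, which organizes $\mathbb{E}_n[F_n(\gamma)F_n(\delta)] = \mathbb{E}_{Y_\gamma \sqcup Y_\delta}(n) = \sum_{f\in R} \mathbb{E}^{\text{emb}}_{Z_f}(n)$ by resolution rather than by a double sum over points, kills all contributions with $\chi^{\text{grp}} < 0$, uses Proposition \ref{subgroups} to force the $\chi^{\text{grp}}=0$ pieces to split as $Z_{f,\gamma}\sqcup Z_{f,\delta}$, and then applies Lemma \ref{imp lemma} to factor $a_0$. Your remaining sketch defers all of this to ``the Magee--Puder machinery,'' which is where the actual work lives, so as it stands the $(2)\Rightarrow(3)$ direction is not a proof.

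Finally, your $(2)\Rightarrow(1)$ step via factorization of all joint moments plus moment-determinacy has a subtlety the paper is careful about in Section \ref{prove dist cor}: the limiting variables $\sum_{d\mid a} d Z_{1/d}$ are not obviously moment-determinate (the paper specifically notes that products of such sums fail the standard Carleman-type tests), and the paper routes around this by first proving the cycle-count convergence (Theorem \ref{cyc ext}, where the limits are genuine Poissons and hence moment-determinate) and then applying a continuous mapping. Invoking moment-determinacy directly for the Poisson-sum limit laws, as you do, needs justification.
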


\subsection{Main Results}
In this paper, we seek to generalize the implications \newline $(2)\implies (1)$ and $(2)\implies (3)$ in Theorem \ref{indep}, and the statements in Theorem \ref{cycle thm}.
We show the following main theorem:
\begin{thm}\label{main thm}
    Let $\gamma_{1},...,\gamma_{t}\in\mathcal{P}_{0}$
    be distinct and for each $i$ let $r_{i}\geq 1$ and:
    \[
        a_{i,1},...,a_{i,r_{i}}\geq1,
    \]
    be integers. As $n\to\infty$ we have:

    \[
        \mathbb{E}_{n}\left[\prod_{i=1}^{t}\prod_{j=1}^{r_{i}}F_{n}(\gamma_{i}^{a_{i,j}})\right]=\prod_{i=1}^{t}\mathbb{E}_{n}\left[\prod_{j=1}^{r_{i}}F_{n}(\gamma_{i}^{a_{i,j}})\right] + O(1/n).
    \]
    with the implied constant depending on  $\gamma_{1},...,\gamma_{t}$ and the integers $a_{i,j}$.
\end{thm}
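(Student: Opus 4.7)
The plan is to reduce the multi-variable factorization to a statement about mixed moments of the cycle counters $C_{n,d}(\gamma_{i})$, and then to extract the $O(1/n)$ rate using the topological/representation-theoretic machinery already behind Theorems~\ref{basic}, \ref{tpz}, \ref{indep}, and \ref{cycle thm}. First, expand each $F_{n}(\gamma_{i}^{a_{i,j}})$ via observation~(1):
\[
    \prod_{i=1}^{t}\prod_{j=1}^{r_{i}}F_{n}(\gamma_{i}^{a_{i,j}}) \;=\; \sum_{(d_{i,j})}\Bigl(\prod_{i,j}d_{i,j}\Bigr)\prod_{i=1}^{t}M_{i}, \qquad M_{i} := \prod_{j=1}^{r_{i}}C_{n,d_{i,j}}(\gamma_{i}).
\]
Since the outer sum is finite and its length depends only on the $a_{i,j}$, it suffices to show
\[
    \mathbb{E}_{n}\!\left[\prod_{i=1}^{t}M_{i}\right] = \prod_{i=1}^{t}\mathbb{E}_{n}[M_{i}] + O(1/n)
\]
for every fixed choice of divisors $d_{i,j}\mid a_{i,j}$.

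Next, each $C_{n,d}(\gamma)$ equals $\tfrac{1}{d}$ times the number of injective $d$-tuples $(x_{1},\dots,x_{d})\in\{1,\dots,n\}^{d}$ on which $\phi(\gamma)$ acts as the cycle $(x_{1}\,x_{2}\,\cdots\,x_{d})$. Expanding $\prod_{i}M_{i}$ by linearity and taking expectations reduces the question to evaluating probabilities of the form $\mathbb{P}_{n}[\,\phi(\gamma_{i}^{d_{i,j}})\text{ acts in a prescribed way on each ordered tuple }\underline{x}_{i,j}]$. These are precisely the fundamental quantities for which Magee--Puder (\cite{MP1,MP2,MPN}) and Puder--Zimhony~\cite{PZ} produced a polynomial-in-$1/n$ expansion whose coefficients enumerate maps from compact (possibly disconnected) surfaces to $X$ with boundary loops dictated by the $\gamma_{i}^{d_{i,j}}$, weighted by the Euler characteristic of the underlying surface.

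In that topological expansion, the leading contribution to $\mathbb{E}_{n}[\prod_{i}M_{i}]$ comes from \emph{disconnected} configurations whose connected components each involve tuples attached to a single block $i$; summing such contributions factors exactly as $\prod_{i}\mathbb{E}_{n}[M_{i}]$ up to $O(1/n)$. Any configuration whose connected components glue together tuples from two distinct blocks must identify two inequivalent primitive geodesics---this is where we use that $\gamma_{1},\dots,\gamma_{t}$ are distinct in $\mathcal{P}_{0}$---which forces a drop in Euler characteristic by at least one and hence an extra factor of $1/n$. This is the multi-block generalization of the argument behind the implication $(2)\Rightarrow(3)$ in Theorem~\ref{indep}.

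The main obstacle is precisely this last step: one must control not just the fully-connected and fully-disconnected configurations, but also every intermediate \emph{partially-connected} one. A M\"obius inversion over the partition lattice on the index set $\{(i,j):1\le i\le t,\ 1\le j\le r_{i}\}$ should organize the required inclusion-exclusion, reducing the analysis to verifying that every non-trivial partition grouping indices across distinct blocks costs at least one power of $1/n$. The technical core of the proof will consist in making this Euler-characteristic estimate uniform in the finite family of divisor choices from the first step, so that the implied constants depend only on $\gamma_{1},\dots,\gamma_{t}$ and the $a_{i,j}$ as claimed.
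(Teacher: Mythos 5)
Your overall strategure parallels the paper's: pass to a resolution/configuration sum, isolate the Euler-characteristic-zero contributions as the leading term, use the classification of $\chi=0$ subgroups of $\Gamma$ (they are infinite cyclic) plus the distinctness of the $\gamma_{i}$ in $\mathcal{P}_{0}$ to rule out $\chi^{\text{grp}}=0$ configurations that glue distinct blocks, and argue the survivors factor by block. That skeleton is the paper's proof, and the observation that a shared $\chi^{\text{grp}}=0$ component would force $\gamma_{i}\sim\gamma_{i'}^{\pm 1}$ is exactly the right use of the hypothesis.

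However, there is a genuine gap in the step you call ``summing such contributions factors exactly as $\prod_{i}\mathbb{E}_{n}[M_{i}]$ up to $O(1/n)$.'' After you restrict to configurations where every connected component lives in a single block $i$, the resulting object is a disjoint union $Z = Z_{1}\sqcup\cdots\sqcup Z_{t}$, and the leading coefficient attached to it is $a_{0}(Z)$, defined via the expansion of $\mathbb{E}^{\text{emb}}_{Z}(n)$, the expected number of \emph{injective} lifts of $Z$ into a random cover. Injectivity of a lift of a disjoint union requires the lifts of the pieces to have disjoint images, so $\mathbb{E}^{\text{emb}}_{Z_{1}\sqcup\cdots\sqcup Z_{t}}(n)$ is \emph{not} the product of the $\mathbb{E}^{\text{emb}}_{Z_{i}}(n)$, and it is not automatic that $a_{0}(Z_{1}\sqcup\cdots\sqcup Z_{t}) = a_{0}(Z_{1})\cdots a_{0}(Z_{t})$. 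Establishing precisely this multiplicativity of $a_{0}$, under the hypotheses that all components have $\chi^{\text{grp}}=0$ and the blocks are separated by which powers of $\gamma$ occur in their $\pi_{1}^{\text{lab}}$, is the content of Lemma~\ref{imp lemma}, and it is the technical core of the paper (proved in Section~\ref{lemprf} via the growing-process machinery of Magee--Puder and the special SBR resolutions). Your proposal treats this as self-evident; it is not, and without it the factorization of the leading term does not follow. The M\"obius-inversion-over-partitions reorganization you suggest does not remove the difficulty: it changes the indexing but still requires the same multiplicativity of the per-configuration weights across blocks. So the proposal identifies the right reduction but omits the key lemma that makes the leading terms actually split.
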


As a corollary, we get:
\begin{cor} \label{main cor}
    In the same setting as Theorem \ref{main thm} we have:
    \[
        \mathbb{E}_{n}\left[\prod_{i=1}^{t}\prod_{j=1}^{r_{i}}F_{n}(\gamma_{i}^{a_{i,j}})\right]=\prod_{i=1}^{t}\underset{n\to\infty}{\lim}\mathbb{E}_{n}\left[\prod_{j=1}^{r_{i}}F_{n}(\gamma_{i}^{a_{i,j}})\right]+ O(1/n),
    \]
    and:
    \[
        \underset{n\to\infty}{\lim}\mathbb{E}_{n}\left[\prod_{i=1}^{t}\prod_{j=1}^{r_{i}}F_{n}(\gamma_{i}^{a_{i,j}})\right]=\prod_{i=1}^{t}\underset{n\to\infty}{\lim}\mathbb{E}_{n}\left[\prod_{j=1}^{r_{i}}F_{n}(\gamma_{i}^{a_{i,j}})\right].
    \]
\end{cor}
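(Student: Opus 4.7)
The second equation is obtained from the first by letting $n\to\infty$ on both sides (the right-hand side being $n$-independent once we know the inner limits exist), so the entire corollary reduces to proving the first equation and verifying along the way that each limit $L_i:=\lim_{n\to\infty}\mathbb{E}_n\!\left[\prod_{j=1}^{r_i}F_n(\gamma_i^{a_{i,j}})\right]$ exists. The plan is therefore to combine Theorem \ref{main thm} with a \emph{single-class} rate statement of the form
\[
\mathbb{E}_n\!\left[\prod_{j=1}^{r_i}F_n(\gamma_i^{a_{i,j}})\right]=L_i+O(1/n),
\]
for some constant $L_i$ depending only on $\gamma_i$ and the exponents $a_{i,1},\dots,a_{i,r_i}$.

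Granting this single-class estimate, the first equation is a short algebraic manipulation. Starting from Theorem \ref{main thm} and substituting,
\[
\mathbb{E}_n\!\left[\prod_{i=1}^{t}\prod_{j=1}^{r_{i}}F_n(\gamma_i^{a_{i,j}})\right]=\prod_{i=1}^{t}\bigl(L_i+O(1/n)\bigr)+O(1/n).
\]
Since the $L_i$ are constants and the partial products on the right are uniformly bounded in $n$, the elementary identity $\prod_i(L_i+\varepsilon_i)=\prod_i L_i+O(\max_i|\varepsilon_i|)$ (proved by induction on $t$) converts this into $\prod_i L_i+O(1/n)$, which is the first equation. Taking $n\to\infty$ then yields the second.

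Thus everything comes down to the single-class estimate. Observation (1) in the introduction gives $F_n(\gamma_i^{a_{i,j}})=\sum_{d\mid a_{i,j}}d\,C_{n,d}(\gamma_i)$, so $\prod_{j=1}^{r_i}F_n(\gamma_i^{a_{i,j}})$ expands into a finite linear combination of monomials in the cycle-count variables $\{C_{n,d}(\gamma_i)\}_d$. Theorem \ref{cycle thm} already identifies the joint limiting distribution of these variables as independent Poissons $Z_{1/d}$, and this pins down the predicted value of $L_i$ as the corresponding joint moment. The main obstacle, and the only genuinely new content, is upgrading the convergence-in-distribution statement of Theorem \ref{cycle thm} to a \emph{quantitative} $O(1/n)$ rate for joint polynomial moments of the $C_{n,d}(\gamma_i)$. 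Convergence in distribution alone is insufficient, as it provides no tail control, so I would expect to derive this rate either by refining the proof of Theorem \ref{cycle thm} or, more naturally, by extracting it directly from the representation-theoretic / Euler-characteristic machinery used to prove Theorem \ref{main thm}, specialized to the case $t=1$ of a single primitive class with arbitrary exponents.
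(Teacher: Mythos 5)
Your overall strategy matches the paper's: you correctly reduce both equations to establishing a single-class rate estimate $\mathbb{E}_n\!\left[\prod_{j}F_n(\gamma_i^{a_{i,j}})\right]=L_i+O(1/n)$ for each $i$, and then the first display follows from Theorem \ref{main thm} by an elementary product manipulation while the second follows by letting $n\to\infty$. The gap you flag—actually producing that single-class rate—is filled exactly the way you suspect: it is already a byproduct of the proof of Theorem \ref{main thm}, since applying Lemma \ref{res and exp} and Theorem \ref{asym exp} to the natural resolution $R_i$ of $Y_{\gamma_i^{a_{i,1}}}\sqcup\dots\sqcup Y_{\gamma_i^{a_{i,r_i}}}$ gives $\mathbb{E}_n\!\left[\prod_{j}F_n(\gamma_i^{a_{i,j}})\right]=\sum_{f\in R_i^{0}}a_0(Z_f)+O(1/n)$, so one may simply take $L_i=\sum_{f\in R_i^{0}}a_0(Z_f)$. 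Your detour through Theorem \ref{cycle thm} to identify $L_i$ as a Poisson moment is unnecessary here (and, as you note, distributional convergence alone would not yield a rate); the paper goes straight to the resolution/Euler-characteristic quantities and never invokes Theorem \ref{cycle thm} in this corollary.
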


As an additional corollary, we get:
\begin{cor}\label{moments cor}
    In the same setting as Theorem \ref{main thm} for each positive integer $k$ and $1\leq i \leq t$ let $Z_{1/k}^{(i)}$ be
    a Poisson random variable with parameter $1/k$, such that all $Z$-s are independent (in the strong sense, not just pairwise independent). Define:
    \[
        X^{(i)}_{a_{i,1},...,a_{i,r_{i}}} = \prod_{j=1}^{r_{i}}\sum_{k|a_{i,j}}kZ_{1/k}^{(i)},
    \]
    and note that for different $i$ the variables $X^{(i)}_{a_{i,1},...,a_{i,r_{i}}}$ are independent.
    Then the cross moments of:
    \[
        \prod_{j=1}^{r_{1}}F_{n}(\gamma_{1}^{a_{1,j}}),...,\prod_{j=1}^{r_{t}}F_{n}(\gamma_{t}^{a_{t,j}}),
    \]
    and of:
    \[
        X^{(1)}_{a_{1,1},...,a_{1,r_{1}}},...,X^{(t)}_{a_{t,1},...,a_{t,r_{t}}},
    \]
    are asymptotically equal. That is, for every $s_{1},...,s_{t}\in \mathbb{Z}_{\geq 1}$ we have:
    \begin{multline}\label{cross eq}
        \underset{n\to\infty}{\lim}\mathbb{E}_{n}\left[\left(\prod_{j=1}^{r_{1}}F_{n}(\gamma_{1}^{a_{1,j}})\right)^{s_{1}}\cdot...\cdot\left(\prod_{j=1}^{r_{t}}F_{n}(\gamma_{t}^{a_{t,j}})\right)^{s_{t}}\right] = \\
        \mathbb{E}\left[\left(X^{(1)}_{a_{1,1},...,a_{1,r_{1}}}\right)^{s_{1}}\cdot...\cdot \left(X^{(t)}_{a_{t,1},...,a_{t,r_{t}}}\right)^{s_{t}}\right] = \\
        \mathbb{E}\left[\left(X^{(1)}_{a_{1,1},...,a_{1,r_{1}}}\right)^{s_{1}}\right] \cdot...\cdot\mathbb{E}\left[\left(X^{(t)}_{a_{t,1},...,a_{t,r_{t}}}\right)^{s_{t}}\right].
    \end{multline}
\end{cor}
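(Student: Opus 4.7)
The plan is to combine Corollary \ref{main cor}, which handles cross-$\gamma$ factorisation, with a moment-convergence upgrade of the distributional statements in Theorems \ref{tpz} and \ref{cycle thm} to evaluate each single-$i$ factor as a Poisson moment. First I absorb each outer power into the product: since $\bigl(\prod_{j=1}^{r_i} F_n(\gamma_i^{a_{i,j}})\bigr)^{s_i} = \prod_{j=1}^{r_i}\prod_{l=1}^{s_i} F_n(\gamma_i^{a_{i,j}})$, the expression inside $\mathbb{E}_n$ on the LHS of \eqref{cross eq} is already a product of $F_n(\gamma_i^{a_{i,j}})$'s of the form covered by Corollary \ref{main cor}, now with each exponent $a_{i,j}$ occurring with multiplicity $s_i$. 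Applying Corollary \ref{main cor} to this enlarged multiset of exponents yields
$$\lim_{n\to\infty}\mathbb{E}_n\!\left[\prod_{i=1}^t\left(\prod_{j=1}^{r_i}F_n(\gamma_i^{a_{i,j}})\right)^{s_i}\right] = \prod_{i=1}^{t}\lim_{n\to\infty}\mathbb{E}_n\!\left[\left(\prod_{j=1}^{r_i}F_n(\gamma_i^{a_{i,j}})\right)^{s_i}\right].$$
It remains to identify each factor on the right with $\mathbb{E}[(X^{(i)})^{s_i}]$; the last equality of \eqref{cross eq} then follows at once because the $X^{(i)}$'s are built from disjoint independent Poisson families and are therefore independent.

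For the identification step, fix $i$ and use observation (1) from the introduction to write $F_n(\gamma_i^{a_{i,j}}) = \sum_{d\mid a_{i,j}} d\, C_{n,d}(\gamma_i)$. Multiplying out, $\bigl(\prod_j F_n(\gamma_i^{a_{i,j}})\bigr)^{s_i}$ becomes a fixed integer polynomial $P$ in the variables $C_{n,d}(\gamma_i)$, while the same $P$ evaluated at $(Z_{1/d}^{(i)})_d$ equals $(X^{(i)})^{s_i}$. Thus by linearity the identification reduces to the joint moment convergence
$$\lim_{n\to\infty}\mathbb{E}_n\!\left[\prod_d C_{n,d}(\gamma_i)^{m_d}\right] = \prod_d \mathbb{E}\!\left[(Z_{1/d}^{(i)})^{m_d}\right]$$
for every finitely-supported non-negative tuple $(m_d)$, the factorisation on the right coming from independence of the $Z_{1/d}^{(i)}$.

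To deduce this joint moment convergence, I use Möbius inversion of $F_n(\gamma_i^a) = \sum_{d\mid a} d \, C_{n,d}(\gamma_i)$ to rewrite $\prod_d C_{n,d}(\gamma_i)^{m_d}$ as a polynomial $Q$ (with rational coefficients independent of $n$) in the $F_n(\gamma_i^k)$, so that Theorem \ref{main thm} with $t=1$ already produces $\lim_n \mathbb{E}_n[Q]$. The same $Q$ applied to $\sum_{d\mid k} d \, Z_{1/d}^{(i)}$ reproduces $\prod_d (Z_{1/d}^{(i)})^{m_d}$ identically, so the two limits agree once one knows moment convergence (not merely distributional convergence) for products of $F_n(\gamma_i^k)$ to the corresponding Poisson polynomials.

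The main obstacle is exactly this promotion from distributional to moment convergence: Theorems \ref{tpz} and \ref{cycle thm} only assert convergence in distribution, while the corollary asks for convergence of cross-moments. I expect to close this either by a uniform-integrability argument leveraging the trivial bound $C_{n,d}(\gamma_i) \le n/d$ together with sharp Poisson-type tail estimates on cycle counts, or, more in keeping with the present paper, by transferring moment information directly from products of $F_n(\gamma_i^k)$, where Theorem \ref{main thm} provides it, to products of $C_{n,d}(\gamma_i)$ through the Möbius-inverted polynomial identity above.
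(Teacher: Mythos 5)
Your overall strategy---reduce to $s_1=\cdots=s_t=1$ by absorbing each power into the product, invoke Corollary~\ref{main cor} for the cross-$\gamma$ factorisation, and identify each single-$\gamma_i$ factor with a Poisson polynomial moment via the identity $F_n(\gamma_i^a)=\sum_{d\mid a}d\,C_{n,d}(\gamma_i)$---is exactly the paper's, up to the order in which the factorisation and the single-$\gamma_i$ evaluation are performed. The place where your argument stalls is the place where the paper is terse: after expanding into $\sum_{d_j\mid a_{i,j}}d_1\cdots d_{r_i}\,\mathbb{E}_n[C_{n,d_1}(\gamma_i)\cdots C_{n,d_{r_i}}(\gamma_i)]$, the paper simply quotes Theorem~\ref{cycle thm} to write $\lim_n \mathbb{E}_n[C_{n,d_1}(\gamma_i)\cdots C_{n,d_{r_i}}(\gamma_i)]=\mathbb{E}[Z_{1/d_1}\cdots Z_{1/d_{r_i}}]$. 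You are right to be bothered that Theorem~\ref{cycle thm}, as stated in this paper, only asserts convergence in distribution; the paper is implicitly using the stronger fact that the underlying result of Puder--Zimhony in \cite{PZ} (their Theorem~1.15) is proved via the method of moments, i.e.\ \cite{PZ} first establishes convergence of all joint moments of the $C_{n,d}(\gamma)$ to the corresponding independent-Poisson moments, and the distributional statement is a consequence. So the missing ingredient you were hunting for is simply the moment form of the cited result, which is available at the source even though the present paper states only its distributional consequence.

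Your proposed workarounds would not have filled the gap on their own. Theorem~\ref{main thm} (and Corollary~\ref{main cor}) with $t=1$ is vacuous---it reduces to $\mathbb{E}_n[\text{product}]=\mathbb{E}_n[\text{product}]+O(1/n)$---and even the proof of Theorem~\ref{main thm} only expresses $\lim_n\mathbb{E}_n[\prod_j F_n(\gamma_i^{a_{i,j}})]$ as a finite sum $\sum_{f\in R_i^0}a_0(Z_f)$ of resolution contributions, with no identification of that number as a Poisson moment; Möbius-inverting from $C_{n,d}$ to $F_n(\gamma^k)$ transports the unidentified limit from one set of variables to another without computing it. The uniform-integrability route is sound in principle, but it would require new tail or $L^p$ estimates on $C_{n,d}(\gamma)$ that the paper never develops, precisely because it leans on \cite{PZ} for the moment convergence.
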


Using Corollary \ref{moments cor} we show the following generalization of Theorem \ref{cycle thm}:
\begin{thm}\label{cyc ext}
    Let $\gamma_{1},...,\gamma_{t}\in \mathcal{P}_{0}$ be distinct. For each positive integer $k$ and $1\leq i \leq t$ let $Z_{1/k}^{(i)}$ be
    a Poisson random variable with parameter $1/k$, such that all $Z$-s are independent (in the strong sense, not just pairwise independent).
    In addition for each $i$ let $r_{i}\geq 1$ be an integer.
    Then:
    \begin{multline}\label{cyc dist eq}
        \left(C_{n,1}(\gamma_{1}),...,C_{n,r_{1}}(\gamma_{1}),...,C_{n,1}(\gamma_{t}),...,C_{n,r_{t}}(\gamma_{t})\right)\\
        \stackrel{\text{dis}}{\rightarrow}
        \left(Z^{(1)}_{1},...,Z^{(1)}_{1/r_{1}},...,Z^{(t)}_{1},...,Z^{(t)}_{1/r_{t}}\right).
    \end{multline}
    That is, the number of bounded cycles for different elements in $\mathcal{P}_{0}$ converge jointly to their respective Poisson variable. In particular, the variables in the LHS of (\ref{cyc dist eq}) are asymptotically independent.
\end{thm}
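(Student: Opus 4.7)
The natural approach is the method of moments. Since every coordinate of the target vector in (\ref{cyc dist eq}) is a Poisson random variable, and the joint distribution of independent Poissons is determined by its joint moments (their MGFs are entire), it suffices to show that every mixed moment
\[
\mathbb{E}_{n}\!\left[\prod_{i=1}^{t}\prod_{d=1}^{r_{i}}C_{n,d}(\gamma_{i})^{s_{i,d}}\right]
\]
converges, as $n\to\infty$, to the corresponding moment of $\bigl(Z^{(1)}_{1},\ldots,Z^{(1)}_{1/r_{1}},\ldots,Z^{(t)}_{1},\ldots,Z^{(t)}_{1/r_{t}}\bigr)$.

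The first step is Möbius inversion. For primitive $\gamma$, the first observation in the introduction, $F_{n}(\gamma^{a})=\sum_{k\mid a} k\, C_{n,k}(\gamma)$, inverts to
\[
d\, C_{n,d}(\gamma)=\sum_{k\mid d}\mu(d/k)\, F_{n}(\gamma^{k}),
\]
so each $C_{n,d}(\gamma_{i})$ is a fixed ($n$-independent) rational linear combination of the variables $F_{n}(\gamma_{i}^{k})$ with $k\mid d$. Expanding the monomial $\prod_{i,d}C_{n,d}(\gamma_{i})^{s_{i,d}}$ turns it into a finite linear combination of cross-products of the form $\prod_{i=1}^{t}\prod_{j}F_{n}(\gamma_{i}^{k_{i,j}})$, where the $\gamma_{i}$ remain pairwise distinct in $\mathcal{P}_{0}$. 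Each such cross-moment is handled by Corollary \ref{moments cor}.

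Applying Corollary \ref{moments cor} term by term, the limit of each $F$-cross-moment equals a moment of the corresponding variables $X^{(i)}_{k_{i,1},\ldots,k_{i,r'_{i}}}=\prod_{j}\sum_{\ell\mid k_{i,j}}\ell Z^{(i)}_{1/\ell}$, and the limit factors as a product over $i$. Performing the same Möbius inversion formally on the Poisson side, with $X^{(i)}_{k}:=\sum_{\ell\mid k}\ell Z^{(i)}_{1/\ell}$ playing the role of $F_{n}(\gamma_{i}^{k})$, one reads off $d\, Z^{(i)}_{1/d}=\sum_{k\mid d}\mu(d/k)X^{(i)}_{k}$. Combining these identifications, the limit of the mixed cycle moment becomes
\[
\mathbb{E}\!\left[\prod_{i,d}\bigl(Z^{(i)}_{1/d}\bigr)^{s_{i,d}}\right]=\prod_{i,d}\mathbb{E}\!\left[\bigl(Z^{(i)}_{1/d}\bigr)^{s_{i,d}}\right],
\]
where the factorization uses the joint independence of the $Z^{(i)}_{1/d}$'s. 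This is precisely the corresponding moment of the right-hand side of (\ref{cyc dist eq}), so by moment-determinacy the joint convergence in distribution follows.

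The main obstacle is conceptual rather than technical: all the analytic content is packed into Corollary \ref{moments cor}, and everything else is Möbius-inversion bookkeeping. The one point requiring care is checking that the Möbius inversion is formally valid on the Poisson side, which holds by construction since $X^{(i)}_{k}=\sum_{\ell\mid k}\ell Z^{(i)}_{1/\ell}$ mirrors $F_{n}(\gamma_{i}^{k})=\sum_{\ell\mid k}\ell C_{n,\ell}(\gamma_{i})$ as formal identities in the respective variables $Z^{(i)}_{1/\ell}$ and $C_{n,\ell}(\gamma_{i})$.
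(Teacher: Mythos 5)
Your proposal is correct and follows essentially the same route as the paper: method of moments, Möbius inversion to express $C_{n,d}(\gamma_i)$ in terms of $F_n(\gamma_i^k)$, application of Corollary \ref{moments cor} to replace each $F_n(\gamma_i^k)$ by $X^{(i)}_k$ in the limit, and Möbius inversion on the Poisson side to recover $Z^{(i)}_{1/d}$. The only cosmetic difference is how moment-determinacy of the limiting joint Poisson distribution is justified (you invoke entirety of the joint MGF directly, the paper cites a product-of-moment-determinate-marginals criterion), but these are interchangeable.
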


Finally, we show the following stronger version of Theorem \ref{tpz}:
\begin{thm}\label{dist thm}
    In the same setting as Corollary \ref{moments cor}, as $n\to\infty$ we have:
    {\footnotesize{\begin{multline}\label{dist eq}
            \left(\prod_{j=1}^{r_{1}}F_{n}(\gamma_{1}^{a_{1,j}}),...,\prod_{j=1}^{r_{t}}F_{n}(\gamma_{t}^{a_{t,j}})\right)
            \stackrel{\text{dis}}{\rightarrow}
            \left(X^{(1)}_{a_{1,1},...,a_{1,r_{1}}},...,X^{(t)}_{a_{t,1},...,a_{t,r_{t}}}\right) = \\
            \left(\prod_{j=1}^{r_{1}}\sum_{k|a_{1,j}}kZ_{1/k}^{(1)},...,\prod_{j=1}^{r_{t}}\sum_{k|a_{t,j}}kZ_{1/k}^{(t)}\right).
        \end{multline}}}
    In particular, the variables
    \[
        \prod_{j=1}^{r_{1}}F_{n}(\gamma_{1}^{a_{1,j}}),...,\prod_{j=1}^{r_{t}}F_{n}(\gamma_{t}^{a_{t,j}}),
    \]
    are asymptotically independent as $n\to \infty$.
\end{thm}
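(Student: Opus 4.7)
My plan is to reduce Theorem \ref{dist thm} to a direct application of the continuous mapping theorem, with Theorem \ref{cyc ext} as the main input. The starting point is the elementary identity from observation (1) of the introduction:
\[
    F_{n}(\gamma_{i}^{a_{i,j}}) \;=\; \sum_{d\mid a_{i,j}} d\cdot C_{n,d}(\gamma_{i}).
\]
Setting $D=\max_{i,j}a_{i,j}$, each coordinate $\prod_{j=1}^{r_{i}}F_{n}(\gamma_{i}^{a_{i,j}})$ on the LHS of (\ref{dist eq}) is a fixed polynomial (depending only on the $a_{i,j}$) in the random vector
\[
    \mathbf{C}_{n} \;=\; \bigl(C_{n,d}(\gamma_{i})\bigr)_{1\leq i\leq t,\; 1\leq d\leq D}.
\]
Hence the entire vector on the LHS of (\ref{dist eq}) can be written as $P(\mathbf{C}_{n})$ for a single polynomial map $P\colon\mathbb{R}^{tD}\to\mathbb{R}^{t}$.

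Next I would apply Theorem \ref{cyc ext} to $\mathbf{C}_{n}$: it converges jointly in distribution to the vector $\bigl(Z^{(i)}_{1/d}\bigr)_{i,d}$ of independent Poisson random variables. The continuous mapping theorem, applied to the (continuous) polynomial map $P$, then yields (\ref{dist eq}) immediately. The independence of the limit coordinates is automatic: each $X^{(i)}_{a_{i,1},\ldots,a_{i,r_{i}}}$ depends only on the Poissons carrying upper index $i$, and these index sets are disjoint across different $i$, which gives the claimed asymptotic independence of the products $\prod_{j}F_{n}(\gamma_{i}^{a_{i,j}})$.

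The main technical work is therefore already done inside Theorem \ref{cyc ext} (and in Theorem \ref{main thm} and Corollary \ref{moments cor} on which it rests). In particular, no moment-determinacy step is needed for the limit vector of Theorem \ref{dist thm}---which would otherwise be delicate, since products of independent Poissons need not satisfy Carleman's condition---because distributional convergence is imported at the level of the cycle counts $C_{n,d}(\gamma_{i})$ and only afterwards pushed through a fixed polynomial map.
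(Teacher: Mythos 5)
Your proposal is correct and follows essentially the same route as the paper: apply Theorem \ref{cyc ext} to the joint vector of cycle counts $C_{n,d}(\gamma_i)$ and then push that convergence through a fixed continuous (polynomial) map via the continuous mapping theorem, with the remark about avoiding moment-determinacy of the limit vector also mirroring the paper's discussion. Your use of $D=\max_{i,j}a_{i,j}$ as the range of cycle lengths is in fact slightly more careful than the paper's stated domain $\mathbb{R}^{\sum_i r_i}$, which as written only accommodates divisors $k\leq r_i$ even though $a_{i,j}$ may exceed $r_i$.
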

Note that each variable $Z_{1/k}^{(i)}$ may appear several times in (\ref{dist eq}).

Consider the following example.
Let $\gamma,\delta\in \mathcal{P}_{0}$ be distinct and suppose we wish to estimate $\mathbb{E}_{n}\left[F_{n}(\gamma^{2})F_{n}(\gamma^{3})F_{n}(\delta^{4})\right]$ for large $n$.
Using Corollary \ref{main cor} we have:
\begin{multline*}
    \mathbb{E}_{n}\left[F_{n}(\gamma^{2})F_{n}(\gamma^{3})F_{n}(\delta^{4})\right] = \\
    \underset{n\to\infty}{\lim}\mathbb{E}_{n}\left[F_{n}(\gamma^{2})F_{n}(\gamma^{3})\right]\underset{n\to\infty}{\lim}\mathbb{E}_{n}\left[F_{n}(\delta^{4})\right] + O_{\gamma,\delta}(1/n).
\end{multline*}
Using Theorem \ref{dist thm} for $F_{n}(\gamma^{2})F_{n}(\gamma^{3})$ and $F_{n}(\delta^{4})$ we have:
\begin{multline*}
    F_{n}(\gamma^{2})F_{n}(\gamma^{3})\stackrel{\text{dis}}{\to} \left(Z^{(1)}_{1}+2Z^{(1)}_{1/2}\right)\left(Z^{(1)}_{1} + 3Z^{(1)}_{1/3}\right) =\\
    \left(Z_{1}^{(1)}\right)^{2} + 2Z^{(1)}_{1}Z^{(1)}_{1/2} + 3Z^{(1)}_{1}Z^{(1)}_{1/3} + 6Z^{(1)}_{1/2}Z^{(1)}_{1/3},
\end{multline*}
and:
\[
    F_{n}(\delta^{4})\stackrel{\text{dis}}{\to} Z^{(2)}_{1}+2Z^{(2)}_{1/2}+4Z^{(2)}_{1/4}.
\]
Recalling that $\mathbb{E}\left[Z_{\lambda}^{2}\right] = \lambda^{2} + \lambda$ we get:
\begin{multline*}
    \underset{n\to\infty}{\lim}\mathbb{E}_{n}\left[F_{n}(\gamma^{2})F_{n}(\gamma^{3})\right]\underset{n\to\infty}{\lim}\mathbb{E}_{n}\left[F_{n}(\delta^{4})\right] =\\
    \left(1+1 + 2\cdot1\cdot\frac{1}{2} + 3\cdot1\cdot\frac{1}{3} + 6\cdot\frac{1}{2}\cdot\frac{1}{3}\right)\cdot\left(1 + 2\cdot\frac{1}{2} + 4\cdot\frac{1}{4}\right) =15.
\end{multline*}
Therefore:
\[
    \mathbb{E}_{n}\left[F_{n}(\gamma^{2})F_{n}(\gamma^{3})F_{n}(\delta^{4})\right] = 15 + O_{\gamma,\delta}(1/n).
\]

\subsection{General Motivation and Laplace Eigenvalues}
The study of the variables $F_{n}(\gamma)$ and $C_{n,d}(\gamma)$ is partially motivated by the study of Laplace eigenvalues on hyperbolic surfaces.
To give a bit of context, the hyperbolic Laplace operator $\Delta$ on $X$ has an extension to a non-negative, unbounded, self-adjoint operator on $L^{2}(X)$ whose spectrum consists of real eigenvalues:
\[
    0=\lambda_{0}(X)\leq \lambda_{1}(X) \leq ...\leq \lambda_{n}(X) \leq ...
\]
with $\lambda_{n}(X)\to\infty$ as $n\to \infty$.

There are long-standing conjectures concerning this spectrum. A famous one is Selberg's eigenvalue conjecture \cite{SELB} which, broadly speaking, says that certain (arithmetic) surfaces (orbifolds) have no eigenvalues below $\frac{1}{4}$.
Theorem \ref{unif} originally appears in \cite{MPN} where they show that a.a.s., a random $n$-cover of $X$ has no new eigenvalues below $\frac{3}{16}-\varepsilon$.

The motivation for the current paper comes from considering a different aspect of the spectrum, namely, the distribution of the high-energy eigenvalues.
A conjecture of Berry \cite{BR2,BR1} says that certain statistics of the spectrum display the statistics found in random matrix theory, specifically GOE/GUE statistics.
Until recently, there has not been much progress toward this conjecture. Lately, however, there have been somewhat successful attempts of proving similar claims for random surfaces.

Recently, Rudnick and Wigman \cite{Ru,RW2,RW} considered similar questions to Berry's conjecture for random surfaces. They show results that support the validity of Berry's conjecture. Their model for a random surface is a natural one that places a probability measure (the Weil-Peterson measure) on the moduli space of compact genus $g$ surfaces without boundary.

Similarly, Naud \cite{Naud} also considered a question similar to Berry's conjecture and gave a result in the spirit of Rudnick \cite{Ru}. This time, his model of a random surface is the one present in the current paper. First, fix a compact hyperbolic surface $X$ as we do, then consider $n$-covers of $X$ for large $n$. This construction gives an alternative, also natural, model for a random surface. In this random cover model, when trying to attack questions similar to Berry's conjecture, a lot of the randomness boils down to understanding the random variables $F_{n}(\gamma)$ for various $\gamma$ and large $n$.

Using the results found here, we show results similar to Rudnick and Wigman's for the random cover model. See \cite{eigenval}.
Some are essentially the same result but in a different model, while others are similar to the results in \cite{RW2} and are slightly stronger.

\subsection{Overview of the Paper}
We first give formal background in Section \ref{bk} and try to introduce the reader to the language and tools developed by Magee, Puder, and Zimhony \cite{MP2,MP1,PZ}
used to analyze the variables $F_{n}(\gamma)$ and $C_{n,d}(\gamma)$. We show how one can generalize the questions asked of $F_{n}(\gamma)$ to broader questions
asked of objects dubbed \textit{subcovers} of $X$. We then give some results of Magee, Puder, and Zimhony related to the study of these subcovers.

After the relevant background, we prove the main Theorem \ref{main thm} and its corollaries assuming an important Lemma \ref{imp lemma} which is at the core of our results. Next, we give more background ahead of the proof of Lemma \ref{imp lemma}, after which we give the proof.

\section*{Acknowledgements}
I would like to thank Prof. Doron Puder for some helpful conversations and insights surrounding this work and for reviewing earlier drafts.
I would also like to thank my advisor Prof. Ze\'{e}v Rudnick for reviewing earlier drafts and for his general guidance.
\thanks{This work was supported by the Israel Science Foundation (grant No. 1881/20) and the ISF-NSFC joint research program (grant No. 3109/23).}

\section{Background}\label{bk}
\subsection{Covers of \texorpdfstring{$X$}{Lg}}\label{cvrs}
As stated earlier, for an arbitrary $o\in X$ the $n$-sheeted covers of $X$ with a labeled fiber of $o$ are in bijection with
$\text{Hom}(\Gamma,S_{n})$. See \newline \cite[pp. 68-70]{HT} for a comprehensive discussion. To see this bijection, let $\mathbb{H}$
be the standard hyperbolic plane with constant curvature -1. In addition
view $\Gamma$ as $\pi_{1}(X,o)$ and as a subgroup of $PSL_{2}(\mathbb{R})$ - the orientation preserving automorphism
group of $\mathbb{H}$. Also view $X$ as $\Gamma\backslash\mathbb{H}$ and set $[n] = \{1,...,n\}$.

Given $\phi\in\text{Hom}(\Gamma,S_{n})$ we define an action of
the discrete $\Gamma$ on $\mathbb{H}\times[n]$ by:
\[
    \gamma(z,j)=(\gamma z,\phi(\gamma)j).
\]
Quotienting $\mathbb{H}\times[n]$ by this action, one finds a (possibly
not connected) $n$-sheeted cover $X_{\phi}=\Gamma\backslash\left(\mathbb{H}\times[n]\right)$
of $X$ (with the covering map being the projection on $X=\Gamma\backslash\mathbb{H})$.
The bijection
\[
    \{n\text{-sheeted covers of }X\}\longleftrightarrow\text{Hom}(\Gamma,S_{n}),
\]
is then given by mapping $\phi$ to $X_{\phi}$.

To see the inverse of this bijection, let $p:\hat{X}\to X$ be an $n$-sheeted
cover of $X$ and label $p^{-1}(o)=\{1,...,n\}$. Let $\gamma\in\pi_{1}(X,o)$, which we consider as a map $\gamma:[0,1]\to X$ for which $\gamma(0)=\gamma(1)=o$,
and let $i\in\{1,...,n\}=p^{-1}(o)$. One can uniquely lift $\gamma$ to a path in
$\hat{X}$ starting at $i$, that is, one can find a map $\hat{\gamma}:[0,1]\to \hat{X}$ for which $\hat{\gamma}(0)=i$ and $\gamma = p\circ\hat{\gamma}$.
As $\gamma(1)=o$ we must have that $\hat{\gamma}(1)\in p^{-1}(o)=\{1,...,n\}$, that is, the endpoint of the lift $\hat{\gamma}$ lies in $\{1,...,n\}$.
We denote this point as $\phi(\gamma)i$.
One also notes that the map $\phi(\gamma)$ from
$\{1,...,n\}$ to itself is actually a permutation, as its inverse
is $\phi(\gamma^{-1})$ where $\gamma^{-1}$ as a loop in $X$ is
just $\gamma$ with reversed orientation. The map $\gamma\mapsto\phi(\gamma)$
is trivially a group homomorphism $\Gamma\to S_{n}$ when $S_{n}$ is equipped with the group structure of composing permutations from left to right.
This is the map:
\[
    \{n\text{-sheeted covers of }X\}\to\text{Hom}(\Gamma,S_{n}),
\]
in the bijection:
\[
    \{n\text{-sheeted covers of }X\}\longleftrightarrow\text{Hom}(\Gamma,S_{n}).
\]

\subsection{Subcovers of \texorpdfstring{$X$}{Lg} and Related Tools}
We now start introducing the language of \textit{subcovers} of $X$ and the related tools we need for our proofs.
See \cite{MP2,MP1,PZ} for a more comprehensive background.

For a given compact hyperbolic surface $X$, one can endow $X$ with a CW-structure
obtained from gluing the edges of a $4g$-gon according to the pattern:
\[
    a_{1},b_{1},a_{1}^{-1},b_{1}^{-1},a_{2},...,a_{g}^{-1},b_{g}^{-1},
\]
so that the CW-structure on $X$ consists of a single vertex, $2g$
directed edges (1-cells) and one 2-cell, see Figure \ref{fig 1}. From now on we denote by $o$ the single vertex in the CW-structure of $X$
and identify $\Gamma$ with $\pi_{1}(X,o)$.
Also, notice that every cover
of $X$ inherits the CW-structure from $X$ (as open cells are covered
by disjoint open sets).

Next up is the definition of a \textit{subcover} (sub-cover) of $X$.
This is the fundamental object we work with throughout the entire rest of the paper.
\begin{defn} [Subcovers of $X$]
    A subcover $Y$ of $X$ is a (not
    necessarily connected) sub-complex of a (not necessarily finite degree)
    covering space of $X$. For any subcover $Y$ we attach a map to $X$, namely the restricted
    covering map $p:Y\to X$ which is an immersion. A morphism of subcovers
    $f:Y_{1}\to Y_{2}$ is a combinatorial morphism of CW-complexes that
    commutes with the restricted covering maps $p_{1}:Y_{1}\to X$ and $p_{2}:Y_{2}\to X$,
    so that $p_{2}\circ f=p_{1}$.
\end{defn}
We think of a subcover $Y$ of $X$ and its attached restricted covering map $p:Y\to X$ as essentially the same.

Intuitively, it helps to think of subcovers of $X$ as (at most) 2-dimensional CW-complexes with edge labels and directions.
The morphisms between such subcovers are the natural ones respecting edge labels and directions. See Figures \ref{fig 1} and \ref{fig 2} for examples.
We can formalize this intuition. To this end, we give an alternative definition of subcovers that comes
from \cite{MP2}. Note that in \cite{MP2} they refer to subcovers as "tiled surfaces''.

\begin{defn} [Alternative definition for subcovers of $X$] \label{alt def}
    A subcover $Y$ of $X$ is an at-most two-dimensional CW-complex
    with an assignment of both a direction and a label from $\{a_{1},b_{1},...,a_{g},b_{g}\}$
    to each edge, such that:

    \begin{itemize}
        \item
              Every vertex of $Y$ has at most one incoming $l$-labeled edge and
              at most one outgoing $l$-labeled edge for each $l\in\{a_{1},b_{1},...,a_{g},b_{g}\}$.
        \item
              Every path in the 1-skeleton of $Y$ reading a word in the letters:
              \[
                  \{a_{1}^{\pm1},b_{1}^{\pm1},...,a_{g}^{\pm1},b_{g}^{\pm1}\}
              \]
              which equals the identity in $\Gamma$ must be closed.
        \item
              Every 2-cell in $Y$ is a $4g$-gon glued along a closed path reading
              the relation $[a_{1},b_{1}]...[a_{g},b_{b}]$, and every such closed
              path is the boundary of at most one $4g$-gon.
    \end{itemize}
    In this setting, a morphism between subcovers $A,B$ is a combinatorial map $A \to B$, meaning that it maps $i$-cells to $i$-cells and $l$-labeled edges to $l$-labeled edges while respecting the direction of the edges.
\end{defn}

A concrete example of subcovers which stars in the rest of the paper are subcovers of the form $Y_{\gamma}$ as defined below:
\begin{defn} \label{def Y gamma}[$p_{\gamma}:Y_{\gamma}\to X$]
    For a given $1\neq\gamma\in\Gamma$ fix a shortest word $w_{\gamma}$ on the generators of $\Gamma$ in the conjugacy class $[\gamma]$.
    Let $Y_{\gamma}$ be a cycle subdivided into $k$ edges where $k$ is the length of $w_{\gamma}$.
    The subcover $p_{\gamma}:Y_{\gamma}\to X$ is given by sending
    all vertices to the single vertex in $X$ and all edges to their respective
    edges in $X$ according to the order of the letters in the word $w_{\gamma}$.
\end{defn}
See Figure \ref{fig 1} for an example.

\begin{figure}
    \includegraphics[width=\linewidth]{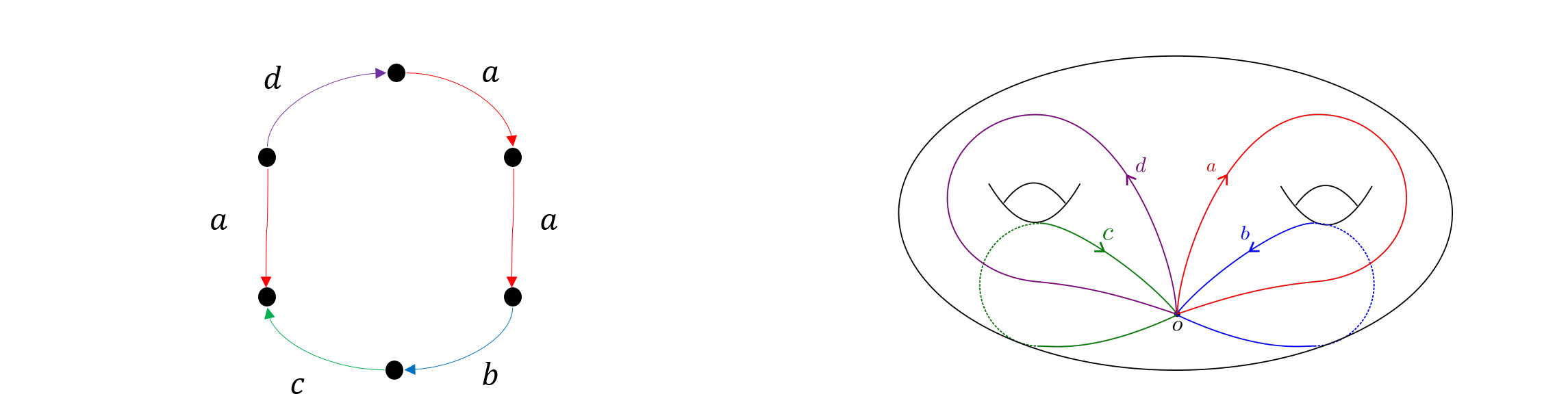}
    \caption{On the right we have a genus-2 surface with its CW-structure (from \cite{MP1}). On the left, we have a subcover $Y$ of a genus-2 surface.
    The subcover $Y$ is the subcover $Y_{\gamma}$ (see Definition \ref{def Y gamma}) of the word $\gamma = a^{2}bca^{-1}d$.
    }
    \label{fig 1}
\end{figure}

Before proceeding, let us show the connection between subcovers and the variables $F_{n}(\gamma)$.
Recall that $o$ is the single vertex in the CW structure of $X$ and consider an element $\phi\in \text{Hom}(\Gamma,S_{n})$.
Let $X_{\phi}$ be the cover corresponding to $\phi$ under
the bijection described in Subsection \ref{cvrs}, and let $p:X_{\phi}\to X$ be the covering map. Recall that we label $p^{-1}(o)$ by $\{1,...,n\}$.
For an element $1\neq \gamma \in \Gamma$ consider the subcover $p_{\gamma}:Y_{\gamma}\to X$, and let $y$ be the vertex in $Y_{\gamma}$ that sits
between the first and last letters of $w_{\gamma}$ on the cycle.

Suppose that $p_{\gamma}$ lifts to a map $\hat{p}_{\gamma}:Y_{\gamma}\to X_{\phi}$. As $\hat{p}_{\gamma}$ is a lift of $p_{\gamma}$, it maps $y\in Y_{\gamma}$ to a vertex $i\in\{1,...,n\}$. Let $l\subseteq X$ be the cycle in the 1-skeleton of $X$ spelling out $w_{\gamma}$, and consider the lift of $l$ to a path in the 1-skeleton of $X_{\phi}$ that starts at $i$. This lift is a path in the 1-skeleton of $X_{\phi}$ that starts at $i$, ends at $\phi(\gamma)i\in\{1,...,n\}$ (see Subsection \ref{cvrs}) and spells out the word $w_{\gamma}$.
On the other hand, as $\hat{p}_{\gamma}$ is a lift of $p_{\gamma}$, the image of $Y_{\gamma}$ in $X_{\phi}$ is a cycle based at $i$ that spells out the word $w_{\gamma}$. As the lift of $l$ to a path in $X_{\phi}$ that starts at $i$ is unique, we conclude that $\phi(\gamma)i = i$, i.e. that $i$ is a fixed point of $\phi(\gamma)$.

Conversely, for each fixed point $i$ of $\phi(\gamma)$, by the description of the cover $X_{\phi}$ in Subsection \ref{cvrs}, the lift of $l$ to a path in $X_{\phi}$ based at $i$ is a cycle that spells out the word $w_{\gamma}$. As such, one can lift $p_{\gamma}$ to a morphism $\hat{p}_{\gamma}:Y_{\gamma}\to X_{\phi}$ that maps $y\in Y_{\gamma}$ to $i$. This shows that lifts of the morphism $p_{\gamma}:Y_{\gamma}\to X$ to a morphism $\hat{p}_{\gamma}:Y_{\gamma}\to X_{\phi}$ correspond bijectively to fixed points of the permutation $\phi(\gamma)$.

This leads us to the following definition which generalizes the problem above. Instead of considering lifts $\hat{p}_{\gamma}:Y_{\gamma}\to X_{\phi}$ of a subcover of the form  $p_{\gamma}:Y_{\gamma}\to X$, we study lifts $\hat{p}:Y\to X_{\phi}$ of a subcover $p:Y\to X$ for a general subcover $Y$.

\begin{defn} [$\mathbb{E}_{Y}(n)$ and $\mathbb{E}_{Y}^{\text{emb}}(n)$]
    For a compact subcover $p:Y\to X$
    we define $\mathbb{E}_{Y}(n)$ to be the expected number of lifts $\hat{p}:Y\to X_{\phi}$
    for a random $\phi\in\text{Hom}(\Gamma,S_{n})$.

    Similarly,
    we define $\mathbb{E}_{Y}^{\text{emb}}(n)$ to be the expected number of injective lifts $\hat{p}:Y\hookrightarrow X_{\phi}$
    for a random $\phi\in\text{Hom}(\Gamma,S_{n})$.
\end{defn}
The discussion before the definition above shows that for a fixed $\gamma\in\Gamma$ and every $\phi\in\text{Hom}(\Gamma,S_{n})$, the number of
lifts of $p_{\gamma}:Y_{\gamma}\to X$ to $\hat{p}_{\gamma}:Y_{\gamma}\to X_{\phi}$ is equal to $\#\text{Fix}(\phi(\gamma))$.
Thus, using this new definition, for $1\neq\gamma\in\Gamma$ we have:
\[
    \mathbb{E}_{n}[F_{n}(\gamma)] = \mathbb{E}_{Y_{\gamma}}(n).
\]

The next definition is of \textit{resolutions} of a given subcover.
These objects essentially track all the ways in which the given subcover can map into full covers of $X$.
\begin{defn} [Resolutions] Let $p:Y\to X$ be a subcover.
    A resolution $R$ of $Y$ is a collection of morphisms of subcovers
    $\{f:Y\to Z_{f}\}$ such that every subcover morphism $h:Y\to\hat{X}$,
    where $\hat{X}$ is a full cover of $X$, decomposes uniquely as:
    \[
        Y \xrightarrow{f} Z_{f} \stackrel{\bar{h}}{\hookrightarrow} \hat{X},
    \]
    with $f\in R$ and $\bar{h}$ an embedding.
    An embedding-resolution
    $R^{\text{emb}}$ of $p:Y\to X$ is a collection of injective morphisms
    of subcovers:
    \[
        \{f:Y\hookrightarrow Z_{f}\},
    \]
    such that for every
    injective subcover morphism $h:Y\hookrightarrow\hat{X}$, where $\hat{X}$
    is a full cover of $X$, there is a unique decomposition $Y\stackrel{f}{\hookrightarrow}Z_{f}\stackrel{\bar{h}}{\hookrightarrow}\hat{X}$
    where $f\in R^{\text{emb}}$ and $\bar{h}$ an embedding.
\end{defn}

\begin{defn} \label{nat res} [Natural resolution] For a compact subcover $p:Y\to X$
    define its natural resolution to be the set of all surjective morphisms
    $\{f:Y\twoheadrightarrow Z_{f}\}$. Note that this resolution is finite as $Y$ is compact.
\end{defn}
See Figure \ref{fig 2} for an example.

A useful lemma which connects the expected values $\mathbb{E}_{Y}$ and $\mathbb{E}_{Y}^{\text{emb}}$
and resolutions is the following:
\begin{lem}[Lemma 2.4 in \cite{PZ}] \label{res and exp}
    For a compact subcover $p:Y\to X$ and $R$ a finite
    resolution of $Y$ we have:
    \[
        \mathbb{E}_{Y}(n)=\sum_{f\in R}\mathbb{E}_{Z_{f}}^{\text{emb}}(n).
    \]
    In addition if $R^{\text{emb}}$ is a finite embedding-resolution of $Y$ then
    we have:
    \[
        \mathbb{E}_{Y}^{\text{emb}}(n)=\sum_{f\in R^{\text{emb}}}\mathbb{E}_{Z_{f}}^{\text{emb}}(n).
    \]
\end{lem}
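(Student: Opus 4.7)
The proof is essentially a bookkeeping argument directly from the definitions, so I will lay out the plan at that level of detail.

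First I would fix a single $\phi \in \text{Hom}(\Gamma, S_n)$ and consider the corresponding cover $p_\phi : X_\phi \to X$. By construction, the number of lifts $\hat p : Y \to X_\phi$ of the restricted covering map $p : Y \to X$ is exactly the number of subcover morphisms $h : Y \to X_\phi$ (a subcover morphism into a full cover is precisely a lift, since it must commute with the covering maps). So the quantity whose expectation I want is simply the cardinality $N_\phi := \#\{h : Y \to X_\phi\}$.

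Next I would invoke the defining property of the resolution $R$: every such $h$ admits a unique factorization $Y \xrightarrow{f} Z_f \stackrel{\bar h}{\hookrightarrow} X_\phi$ with $f \in R$ and $\bar h$ an embedding. This yields a bijection
\[
    \{h : Y \to X_\phi\} \longleftrightarrow \bigsqcup_{f \in R}\{\bar h : Z_f \hookrightarrow X_\phi\},
\]
where the disjoint union makes sense because the factorization is unique (so no $h$ is double-counted across different $f$'s, and for each $f \in R$ the map $\bar h \mapsto \bar h \circ f$ is injective). Hence
\[
    N_\phi = \sum_{f \in R} \#\{\bar h : Z_f \hookrightarrow X_\phi\}.
\]

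Now I would take expectations over uniform $\phi \in \text{Hom}(\Gamma, S_n)$. Since $R$ is finite, I can exchange the (finite) sum over $f \in R$ with the averaging over $\phi$, obtaining
\[
    \mathbb{E}_Y(n) = \mathbb{E}_\phi[N_\phi] = \sum_{f \in R} \mathbb{E}_\phi\bigl[\#\{\bar h : Z_f \hookrightarrow X_\phi\}\bigr] = \sum_{f \in R}\mathbb{E}_{Z_f}^{\text{emb}}(n),
\]
which is the first identity. For the second identity, I would repeat the same argument restricted to injective morphisms: by the defining property of $R^{\text{emb}}$, every embedding $h : Y \hookrightarrow X_\phi$ decomposes uniquely as $Y \stackrel{f}{\hookrightarrow} Z_f \stackrel{\bar h}{\hookrightarrow} X_\phi$ with $f \in R^{\text{emb}}$, and the same pointwise counting followed by linearity of expectation yields $\mathbb{E}_Y^{\text{emb}}(n) = \sum_{f \in R^{\text{emb}}} \mathbb{E}_{Z_f}^{\text{emb}}(n)$.

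There is no real obstacle here; the only mild sanity check is the bijectivity step, where one must make sure the factorization condition in the definition of a resolution is being read correctly: the uniqueness of $(f, \bar h)$ for each $h$ is what makes the counts on both sides match without overcounting, and the hypothesis that $f$ ranges over $R$ (rather than only some subset) is what ensures every $h$ is accounted for.
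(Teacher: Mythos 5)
Your proof is correct. Note that the paper does not reprove this lemma---it is imported directly as Lemma 2.4 of \cite{PZ}---so there is no in-paper proof to compare against, but your argument is exactly the natural one: identify lifts with subcover morphisms into $X_\phi$, use the unique-factorization property in the definition of a (finite) resolution to get a bijection $\{h : Y \to X_\phi\} \leftrightarrow \bigsqcup_{f\in R}\{\bar h : Z_f \hookrightarrow X_\phi\}$ for each fixed $\phi$, and then apply linearity of expectation over the finite sum. The parallel argument for $R^{\text{emb}}$ is equally correct, and your closing remark about where the bijectivity comes from (existence gives surjectivity of the count, uniqueness gives injectivity) is the right sanity check.
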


Next, we need to define the Euler characteristic of a group $G$. Surprisingly, this invariant shows up when studying the asymptotics of the variables $F_{n}(\gamma)$.
We do not give the exact definition (see ~\cite{Bro}), as the only groups we need to consider are the subgroups of $\Gamma$, which by the following proposition (see \cite{Sco}), are of only two kinds:

\begin{prop} \label{subgroups} Let  $H \leq \Gamma = \left\langle a_{1},b_{1},...,a_{g},b_{g}|[a_{1},b_{1}]...[a_{g},b_{b}]\right\rangle$ then $H$ is free or $H\cong\left\langle a_{1},b_{1},...,a_{h},b_{h}|[a_{1},b_{1}]...[a_{h},b_{h}]\right\rangle$ for $h\geq g$, i.e. $H$ is either free or a surface group.
\end{prop}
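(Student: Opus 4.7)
The plan is to translate the algebraic statement about subgroups into a topological statement about covering spaces of $X$, and then read off the structure of $H$ from the topology of the corresponding cover.

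Concretely, given $H\leq \Gamma = \pi_{1}(X,o)$, let $p_{H}\colon X_{H}\to X$ be the covering space corresponding to $H$ in the Galois correspondence, so that $\pi_{1}(X_{H})\cong H$ (after a basepoint choice). Since $X$ is an orientable closed surface, $X_{H}$ is an orientable (connected) 2-manifold without boundary, which may or may not be compact. I would then split into two cases according to the index $[\Gamma:H]$.

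If $[\Gamma:H]=\infty$, then $X_{H}$ is a non-compact surface without boundary. Here I would invoke the classical fact (e.g.\ from the classification of surfaces, or via a triangulation/exhaustion argument showing that an open surface deformation retracts onto a 1-dimensional spine) that any connected non-compact surface without boundary is homotopy equivalent to a graph. Consequently $H\cong\pi_{1}(X_{H})$ is the fundamental group of a graph, hence free, giving the first alternative.

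If $[\Gamma:H]=d<\infty$, then $X_{H}$ is a compact, connected, orientable surface without boundary, so $X_{H}$ is a closed orientable surface of some genus $h\geq 0$, and $H\cong \pi_{1}(X_{H})$ is a surface group with the stated presentation. The Euler characteristic is multiplicative under finite covers, which gives $2-2h=\chi(X_{H})=d\cdot\chi(X)=d(2-2g)$, i.e.\ $h-1=d(g-1)$. Since $g\geq 2$ we have $g-1\geq 1$, hence $h\geq g$, with equality precisely when $d=1$.

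The main thing to justify carefully is the infinite-index case: that an open connected surface without boundary has the homotopy type of a graph. For a reader who wants to avoid the full classification of non-compact surfaces, I would instead argue directly using the CW-structure lifted from $X$ described in Section \ref{bk}: since $X_{H}$ is a non-compact cover, its 2-cells can be collapsed one at a time along a free edge on the boundary of the ``missing'' region to deformation retract $X_{H}$ onto a subcomplex of its 1-skeleton, which is a graph. Everything else then follows formally from $\chi$-multiplicativity and Proposition \ref{subgroups}'s statement, so no further subtlety is needed.
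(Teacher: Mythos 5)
The paper itself does not prove Proposition \ref{subgroups}: it simply cites \cite{Sco} and treats it as a known fact, so there is no ``paper's proof'' to compare against. Your route through the Galois correspondence for covering spaces is the standard one and the overall structure is correct. The finite-index case is airtight: $X_{H}$ is a closed orientable surface of genus $h$, and the Euler characteristic computation $2-2h = d(2-2g)$, i.e.\ $h = d(g-1)+1 \geq g$ for $g\geq 2$, $d\geq 1$, gives exactly the desired bound. The infinite-index case is correctly reduced to the classical theorem that a connected non-compact surface without boundary has free fundamental group (equivalently, is homotopy equivalent to a graph); citing that theorem is a perfectly acceptable way to finish.

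The one genuine gap is in the ``elementary'' alternative you offer for the open-surface step. You propose collapsing the lifted $2$-cells of $X_{H}$ one at a time ``along a free edge on the boundary of the missing region,'' but a cover of a closed surface is itself a surface without boundary: in the lifted CW structure every edge lies on exactly two $2$-cells (with multiplicity), so there are no free faces and no ``missing region'' boundary, and the collapse cannot get started. An actual elementary proof of the open-surface theorem needs an exhaustion $K_{1}\subset K_{2}\subset\cdots$ of $X_{H}$ by compact subsurfaces \emph{with boundary} (each of which does retract onto a graph) together with a compatibility argument for the successive spines; alternatively one can invoke Stallings--Swan, noting $X_{H}$ is a $K(H,1)$ of dimension $2$ with $H^{2}(X_{H})=0$ so $\mathrm{cd}(H)\leq 1$. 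Either of these fills the gap; the naive collapse does not.
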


From here it is enough to define:
\begin{defn} [Euler characteristic of groups]
    Let $G\leq \Gamma$, define the Euler Characteristic of $G$, denoted $\chi(G)$, to be:
    \begin{itemize}
        \item
              $\chi(G)=1-r$  if $G\cong F_{r}$ where $F_{r}$ is the free group on $r$ generators,
        \item
              $\chi(G)=2-2h$ if $G\cong \left\langle a_{1},b_{1},...,a_{h},b_{h}|[a_{1},b_{1}]...[a_{h},b_{h}]\right\rangle$.
    \end{itemize}
\end{defn}

Of central importance in the proof of Theorem ~\ref{main thm}
are those subgroups of $\Gamma$ which have Euler characteristic zero. From
the classification of subgroups of $\Gamma$, Proposition ~\ref{subgroups}, we see that such an Euler characteristic
zero subgroup must be isomorphic to $\mathbb{Z}$.

Next, we define the labeled fundamental group - $\pi_{1}^{\text{lab}}(Y)$, and the group Euler characteristic - $\chi^{\text{grp}}(Y)$, associated with a subcover $p:Y\to X$.
\begin{defn} [$\pi_{1}^{\text{lab}}(Y)$ and $\chi^{\text{grp}}(Y)$]
    For a connected compact subcover \newline $p:Y\to X$ we know that $\pi_{1}(Y)$ is defined up to conjugacy by the choice of a base-point in $Y$,
    thus the image $p_{*}(\pi_{1}(Y))$ in $\pi_{1}(X)=\Gamma$ lies within a well-defined conjugacy class of subgroups of $\Gamma$.
    We denote this conjugacy class of subgroups by $\pi_{1}^{\text{lab}}(Y)$.
    Note that $\pi_{1}^{\text{lab}}(Y)$ is a conjugacy class of f.g. subgroups of $\Gamma$ so we may define $\chi^{\text{grp}}(Y)$
    as the Euler characteristic of some group in $\pi_{1}^{\text{lab}}(Y)$.
    For a compact but not necessarily connected subcover $p:Y\to X$, let
    $Y_{1},...,Y_{n}$ be its connected components. Denote by $\pi_{1}^{\text{lab}}(Y)$
    the multiset $\{\pi_{1}^{\text{lab}}(Y_{1}),...,\pi_{1}^{\text{lab}}(Y_{n})\}$
    and define $\chi^{\text{grp}}(Y)=\sum_{i=1}^{n}\chi(\pi_{1}^{\text{lab}}(Y_{i}))$.
\end{defn}
See Figure \ref{fig 2} for an example.

Another important example comes from the subcovers $p_{\gamma}:Y_{\gamma}\to X$ for \newline
$1\neq \gamma \in \Gamma$. As $Y_{\gamma}$ is topologically just $S^{1}$ we know that $\pi_{1}(Y_{\gamma}) \cong \mathbb{Z}$.
By definition of the map $p_{\gamma}$, the induced map $(p_{\gamma})_{*}:\pi_{1}(Y_{\gamma})\to \Gamma$ must map the positive generator of $\pi_{1}(Y_{\gamma})$ to the element in $\Gamma$ represented by $w_{\gamma}$, which using abuse of notation we also label $w_{\gamma}$, so that:
\[
    \pi_{1}^{\text{lab}}(Y_{\gamma}) = \left[\langle w_{\gamma}\rangle\right] = \left[\langle\gamma\rangle\right],
\]
that is, $\pi_{1}^{\text{lab}}(Y_{\gamma})$ is the conjugacy class of the infinite cyclic subgroup generated by $\gamma$. As such:
\[
    \chi^{\text{grp}}(Y_{\gamma}) = 0.
\]

\begin{figure}
    \includegraphics[width=\linewidth]{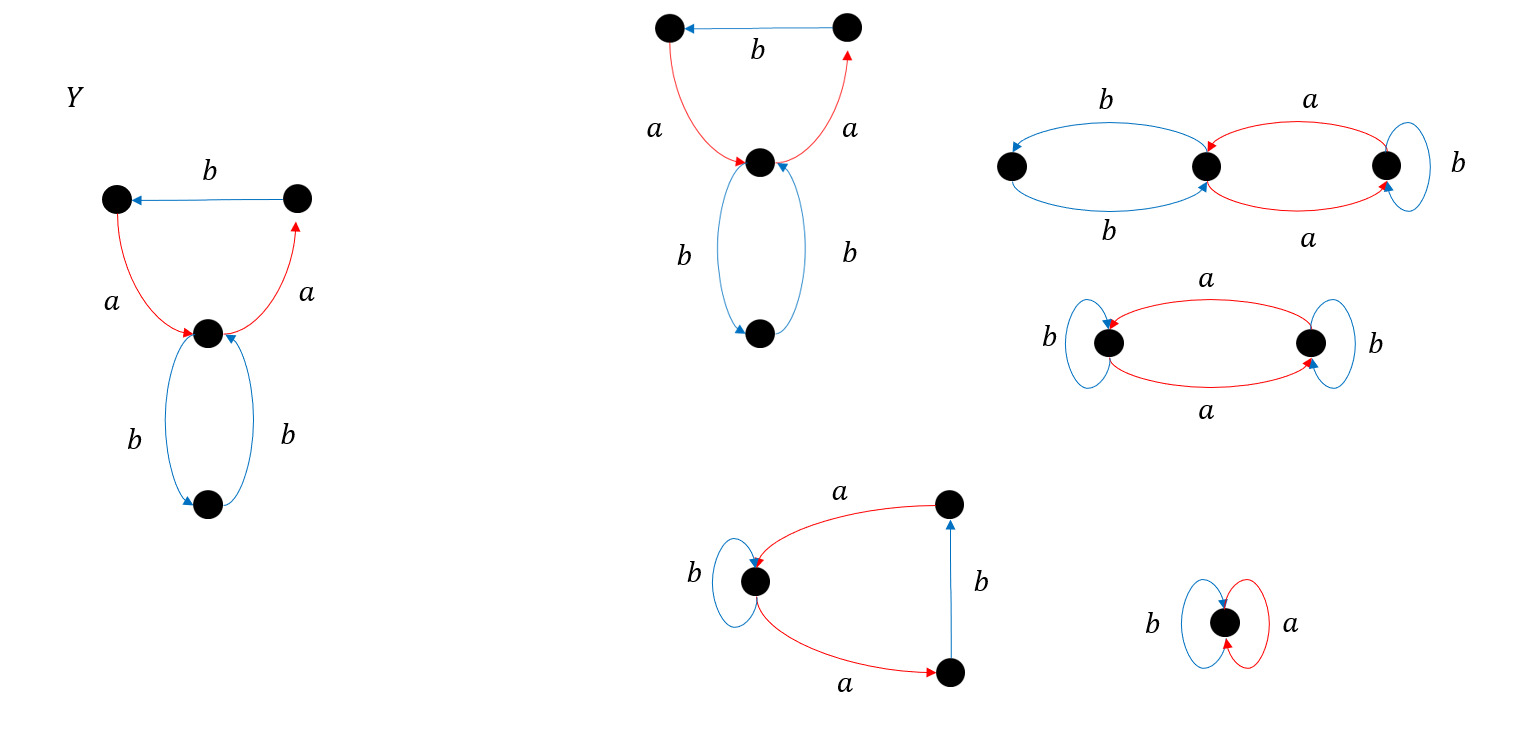}
    \caption{On the left we have a subcover $Y$ of the genus-2 surface with CW-structure as in Figure \ref{fig 1}. On the right, we have its natural resolution. Both subcovers in the top right have $\pi_{1}^{\text{lab}}\cong F_{3}$ and so they have $\chi^{\text{grp}}=-2$.
    The copy of $Y$ and both subcovers on the bottom have $\pi_{1}^{\text{lab}}\cong F_{2}$ which implies $\chi^{\text{grp}}=-1$ for them.
    }
    \label{fig 2}
\end{figure}

Note that if $f:Y\to Z$ is a morphism of compact connected subcovers of $X$
then $\pi_{1}^{\text{lab}}(Y)\leq\pi_{1}^{\text{lab}}(Z)$. This is due to the fact that
$f$ commutes with the projections $Y\xrightarrow{p}X,Z\xrightarrow{q}X$,
and thus so does $f_{*}$. Note that $\pi_{1}^{\text{lab}}(Y)$ is actually a conjugacy class of subgroups of $\Gamma$, so by writing
$\pi_{1}^{\text{lab}}(Y)\leq\pi_{1}^{\text{lab}}(Z)$ we mean that for every $H\in \pi_{1}^{\text{lab}}(Y)$ there is a $J\in \pi_{1}^{\text{lab}}(Z)$ such that $H\leq J$.

This generalizes to $Y,Z$ not necessarily connected in the sense that for every $H$ in a conjugacy class of subgroups $C\in \pi_{1}^{\text{lab}}(Y)$ there is a subgroup $J$ in some
conjugacy class of subgroups $D\in \pi_{1}^{\text{lab}}(Z)$ such that $H\leq J$. We have shown the following observation:

\begin{lem} \label{subgrp lem}
    Let $f:Y\to Z$ be a subcover morphism, where $Y$ and $Z$ are compact. If $\gamma \in \pi_{1}^{\text{lab}}(Y)$ up to conjugation, that is to say that $\gamma$ is an element of a subgroup in a conjugacy class of subgroups in $\pi_{1}^{\text{lab}}(Y)$, then $\gamma \in \pi_{1}^{\text{lab}}(Z)$ up to conjugation.
\end{lem}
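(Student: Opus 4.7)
The plan is to reduce the statement to the connected case and then invoke naturality of $\pi_{1}$ applied to the commuting triangle
\[
    Y \xrightarrow{f} Z \xrightarrow{q} X, \qquad q\circ f = p.
\]

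First I would pick a representative. By definition of $\pi_{1}^{\text{lab}}(Y)$ on a possibly disconnected subcover, the hypothesis ``$\gamma\in\pi_{1}^{\text{lab}}(Y)$ up to conjugation'' means that there is a connected component $Y_{0}$ of $Y$ and a subgroup $H\in\pi_{1}^{\text{lab}}(Y_{0})$ such that $\gamma$ is conjugate (in $\Gamma$) to some element $h\in H$. Concretely, $H = p_{*}(\pi_{1}(Y_{0},y))$ for some basepoint $y\in Y_{0}$.

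Second, I would use that subcover morphisms send connected components into connected components. Since $f(Y_{0})$ is connected, it lies in a single connected component $Z_{0}$ of $Z$, so $f$ restricts to a combinatorial map $f_{0}\colon Y_{0}\to Z_{0}$ that still commutes with the projections, i.e.\ $q_{0}\circ f_{0}=p_{0}$, where $p_{0},q_{0}$ are the restrictions of $p,q$ to $Y_{0},Z_{0}$. Applying $\pi_{1}$ at the basepoints $y$ and $f(y)$ gives $(p_{0})_{*}=(q_{0})_{*}\circ (f_{0})_{*}$, and consequently
\[
    H \;=\; (p_{0})_{*}\bigl(\pi_{1}(Y_{0},y)\bigr) \;=\; (q_{0})_{*}\bigl((f_{0})_{*}\pi_{1}(Y_{0},y)\bigr) \;\leq\; (q_{0})_{*}\bigl(\pi_{1}(Z_{0},f(y))\bigr).
\]
The right-hand side is a representative of the conjugacy class $\pi_{1}^{\text{lab}}(Z_{0})$, which is one of the conjugacy classes comprising the multiset $\pi_{1}^{\text{lab}}(Z)$.

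Third, I would conclude by chasing conjugations. Since $h\in H$ and $H$ is contained in a subgroup $J$ lying in the conjugacy class $\pi_{1}^{\text{lab}}(Z_{0})\in\pi_{1}^{\text{lab}}(Z)$, we have $h\in J$. Because $\gamma$ is conjugate to $h$ in $\Gamma$, and $J$ is only defined up to conjugacy anyway, $\gamma$ lies in some $\Gamma$-conjugate of $J$, which is still a representative of the class $\pi_{1}^{\text{lab}}(Z_{0})$. Thus $\gamma\in\pi_{1}^{\text{lab}}(Z)$ up to conjugation, as claimed.

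There is no real obstacle here: the content is just functoriality of $\pi_{1}$ plus the fact that a combinatorial morphism of subcovers intertwines the covering projections to $X$. The only mildly fussy step is bookkeeping the conjugacy classes, since $\pi_{1}^{\text{lab}}$ is by definition a conjugacy class of subgroups and changing basepoints in $Y_{0}$ or $Z_{0}$ conjugates the images of $\pi_{1}$ inside $\Gamma$; this is harmless because the statement itself is made only up to conjugation.
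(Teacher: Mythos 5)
Your proposal is correct and follows essentially the same route as the paper: the paper establishes the lemma by observing that a subcover morphism $f$ commutes with the projections to $X$, so $f_{*}$ does too, and the containment of $\pi_{1}^{\text{lab}}$-classes follows from functoriality of $\pi_{1}$ (first in the connected case, then componentwise for disconnected $Y,Z$). Your more explicit bookkeeping of basepoints and conjugacy classes is just a fleshed-out version of the same argument.
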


Before connecting the previous definitions with our goal of understanding the variables $F_{n}(\gamma)$, we define an asymptotic expansion of a function.
\begin{defn} [Asymptotic expansion]
    Let $k_{1}>k_{2}>...$ and $a_{k_{1}},a_{k_{2}},...$ be sequences of real numbers. We say that a function $f:\mathbb{Z}_{\geq 0}\to\mathbb{R}$ has asymptotic expansion $a_{k_{1}},a_{k_{2}},...$
    if for every $l\in \mathbb{N}$ we have:
    \[
        f(N) = a_{k_{1}}N^{k_{1}}+...+a_{k_{l}}N^{k_{l}} + O\left(N^{k_{l+1}}\right).
    \]
    We denote:
    \[
        f(N) \approx \sum_{l=1}^{\infty}a_{k_{l}}N^{k_{l}}.
    \]
\end{defn}

The connection between the previous few definitions and our goal is given by \cite[Theorem 2.6]{PZ}.
Simply stated, we have an asymptotic expansion of $\mathbb{E}_{Y}^{\text{emb}}(n)$ in terms of the intrinsic properties of $Y$:
\par
\begin{thm} [Asymptotic expansion of $\mathbb{E}_{Y}^{\text{emb}}(n)$] \label{asym exp}
    Let $p:Y\to X$ be a compact, not
    necessarily connected, subcover of $X$. Then there are rational numbers
    $a_{0},a_{-1},a_{-2},...$ such that:
    \[
        \mathbb{E}_{Y}^{\text{emb}}(n)\approx n^{\chi^{\text{grp}}(Y)}\left(a_{0}+a_{-1}n^{-1}+a_{-2}n^{-2}+...\right).
    \]
\end{thm}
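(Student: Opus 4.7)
The plan is to compute $\mathbb{E}_{Y}^{\mathrm{emb}}(n)$ as a ratio of two symmetric-group homomorphism counts, and then expand each via the Witten zeta formula. Specifically, I would write
\[
\mathbb{E}_{Y}^{\mathrm{emb}}(n) = \frac{\#\{(\phi,\iota):\phi\in\mathrm{Hom}(\Gamma,S_{n}),\ \iota:Y\hookrightarrow X_{\phi}\}}{\#\mathrm{Hom}(\Gamma,S_{n})},
\]
and decompose the numerator by first choosing an injection $V(Y)\hookrightarrow[n]$ (contributing a falling-factorial factor $(n)_{|V(Y)|}$) and then counting the extensions of the induced partial labeled cover to a full $n$-cover of $X$. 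After quotienting by combinatorial automorphisms of $Y$ and summing over its connected components, the extension count should be in bijection with $\prod_{j}\#\mathrm{Hom}(K_{j},S_{n-|V(Y)|})$, where the $K_{j}$ are subgroups of $\Gamma$ arising as fundamental groups of the connected components of the complement of $\iota(Y)$ inside $X_{\phi}$. By Proposition \ref{subgroups}, each such $K_{j}$ is either free or a surface group.

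Using $\#\mathrm{Hom}(\Gamma,S_{n})=(n!)^{2g-1}\zeta^{S_{n}}(2g-2)$ together with the analogous Witten-zeta formulas for the $K_{j}$, the ratio becomes a product of factorial powers times ratios of $\zeta^{S_{m}}$-values. By Stirling, the $n!$-powers combine into $n^{E}$ times a smooth $1/n$-expansion, where the exponent $E$ is forced by Euler-characteristic additivity: $\chi(X_{\phi}) = n(2-2g)$ equals $\chi(\iota(Y))+\sum_{j}\chi(\text{complement component}_{j})$, and substituting the Euler characteristics of the $K_{j}$ back into the quotient of factorials produces exactly $n^{\chi^{\mathrm{grp}}(Y)}$ after cancellation with the denominator's leading factorial power. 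The Witten zeta values $\zeta^{S_{m}}(s)$ at positive integer $s$ admit convergent $1/m$-expansions with rational coefficients, obtained via the hook-length formula from the characters of $S_{m}$, so inserting them yields the stated series $a_{0}+a_{-1}n^{-1}+a_{-2}n^{-2}+\cdots$ with $a_{i}\in\mathbb{Q}$.

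The hard part will be identifying the complement subgroups $K_{j}$ precisely and showing that the resulting $1/n$-expansion depends only on the intrinsic data of $Y$. The complement of $\iota(Y)$ in a cover is a CW-complex whose boundary can be glued back to $Y$ along various combinatorial patterns, and one must track these gluings carefully to establish that the specific embedding affects only the subleading rational coefficients, while the leading exponent is controlled solely by the invariant $\chi^{\mathrm{grp}}(Y)$. This would likely require a Stallings-type folding or core-graph construction adapted to surface-group covers, together with a uniform bound (along the lines of Lemma \ref{res and exp}, but at the level of embedding counts) ensuring that the contributions from degenerate ways of embedding $Y$ remain subleading. A parallel verification is needed in the disconnected case: if $Y=\bigsqcup_{i}Y_{i}$, then pairwise-disjointness of embeddings of different components is automatic at leading order, so $\chi^{\mathrm{grp}}(Y)=\sum_{i}\chi^{\mathrm{grp}}(Y_{i})$ emerges from the same Euler-characteristic bookkeeping applied componentwise.
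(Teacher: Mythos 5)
This theorem is not proved in the paper at all: it is quoted verbatim from Puder--Zimhony \cite[Theorem 2.6]{PZ}, so there is no ``paper's own proof'' to compare against. Judged on its own merits, however, your proposal contains a gap that I believe is fatal rather than merely technical.

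The central step you propose is that, after choosing an injection $V(Y)\hookrightarrow [n]$, the number of ways to extend the partial labelled cover to a full $n$-cover of $X$ ``should be in bijection with $\prod_j \#\mathrm{Hom}(K_j,S_{n-|V(Y)|})$.'' This kind of factorisation is exactly what makes the free-group theory of Stallings core graphs and Puder's word-measure computations tractable: for a free group, extending partial permutations to full ones generator-by-generator is an independent falling-factorial count. For a surface group it fails, and the failure is the whole difficulty of the subject. The relator $[a_1,b_1]\cdots[a_g,b_g]=1$ couples the generators globally, so the set of $\phi\in\mathrm{Hom}(\Gamma,S_n)$ agreeing with a given partial labelling is not a product of hom-sets for complement subgroups; in particular the complement of $Y$ inside $X_\phi$ is a surface with boundary, not a cover of $X$, and one cannot read off a subgroup $K_j$ of $\Gamma$ whose hom-count it equals. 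The actual route in \cite{PZ} (following \cite{MP1,MP2}) is representation-theoretic: one expresses $\mathbb{E}_Y^{\mathrm{emb}}(n)$ via the Frobenius-type formula as $(n)_{|V(Y)|}/ (n!)^{2g-1}$ times a weighted sum over Young diagrams $\lambda\vdash n$, and the asymptotic expansion is extracted from delicate estimates on such sums, with the exponent $\chi^{\mathrm{grp}}(Y)$ emerging from an analysis of which $\lambda$ dominate. Your plan recognises that a ``core-graph construction adapted to surface groups'' is needed, but that construction is the tiled-surface framework you already have; it organises the combinatorics but does not by itself deliver the $1/n$-expansion without the $S_n$-character machinery.

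A secondary problem is your Euler-characteristic bookkeeping. Even granting a clean decomposition of $\chi(X_\phi)$ into $\chi$ of $Y$ and of the complement pieces, what you would extract is the topological Euler characteristic $\chi(Y)$, not the group Euler characteristic $\chi^{\mathrm{grp}}(Y)$ of the image $p_*(\pi_1(Y))\leq\Gamma$. The paper explicitly records (in the proof of Proposition~\ref{key obs}) that $\chi=\chi^{\mathrm{grp}}$ only for boundary-reduced subcovers; for a general compact subcover the two can disagree, and the theorem you are proving asserts the exponent is $\chi^{\mathrm{grp}}(Y)$. This mismatch is another sign that the naive counting does not compute the right quantity. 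If you want a proof you can actually carry out, the place to start is the Frobenius formula for $\#\mathrm{Hom}(\Gamma,S_n)$ restricted to $\phi$ compatible with a given embedding, together with the asymptotics of $\zeta^{S_n}$ and related sums over Young diagrams developed in \cite{MP1,MP2}.
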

\begin{defn} [$a_{0}(Y)$]
    For a compact subcover $p:Y\to X$ we define $a_{0}(Y)$ as the number $a_{0}$
    in the asymptotic expansion of $\mathbb{E}_{Y}^{\text{emb}}(n)$ from Theorem \ref{asym exp}
\end{defn}

Note that throughout the next section, we only use the fact that:
\[
    \mathbb{E}_{Y}^{\text{emb}}(n)= n^{\chi^{\text{grp}}(Y)}\left(a_{0}(Y)+O_{Y}(n^{-1})\right).
\]

Here we conclude the background section of this paper. We are now ready to prove the main theorem - Theorem \ref{main thm} given Lemma \ref{imp lemma} which is stated below.
The proof of Lemma \ref{imp lemma} requires some more background which will be given in Section \ref{lemprf}.

\section{The Proof of Theorem \ref{main thm} and its Corollaries}
\subsection{Proof of The Main Theorem \ref{main thm}}
Before diving into the proof, let us state an important lemma
which will be proven in Section \ref{lemprf}.

\begin{lem} \label{imp lemma}
    Let $E=E_{\gamma}\sqcup E'$ be a compact subcover
    of $X$ such that $\chi^{\text{grp}}(C) = 0$ for all connected components $C$ of $E$.
    If all conjugacy classes in $\pi_{1}^{\text{lab}}(E_{\gamma})$ (or $\pi_{1}^{\text{lab}}(E_{\gamma})$ itself in case $E_{\gamma}$ is connected)
    contain a non-zero power of $\gamma$ (up to conjugation) and all conjugacy classes of $\pi_{1}^{\text{lab}}(E')$
    (or $\pi_{1}^{\text{lab}}(E')$ itself in case $E'$ is connected)
    do not contain any non-zero power of $\gamma$ (up to conjugation),
    then we have \newline $a_{0}(E)=a_{0}(E_{\gamma})a_{0}(E')$.
\end{lem}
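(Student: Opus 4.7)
The plan is to analyze $\mathbb{E}_E(n)$ via the natural resolution of $E$ (Definition \ref{nat res}), discard ``cross-merging'' surjections as lower-order terms using the hypothesis on $\gamma$, and then compare with $\mathbb{E}_{E_\gamma}(n)\cdot\mathbb{E}_{E'}(n)$ by strong induction on the total number of cells of $E$. To begin, I classify each surjection $F: E\twoheadrightarrow Z_F$ in the natural resolution as \emph{cross-merging} if the images $F(E_\gamma)$ and $F(E')$ in $Z_F$ share a vertex, and \emph{non-cross-merging} otherwise; in the latter case, writing $f_\gamma := F|_{E_\gamma}$ and $f' := F|_{E'}$, the images $Z_{f_\gamma}, Z_{f'}$ are disjoint as subcovers and $Z_F = Z_{f_\gamma}\sqcup Z_{f'}$.

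The crucial observation is that every cross-merging $F$ satisfies $\chi^{\text{grp}}(Z_F) \leq -1$. Indeed, by Lemma \ref{subgrp lem}, the cross-merged component of $Z_F$ has $\pi_1^{\text{lab}}$ containing (up to conjugation) both a nonzero power $\gamma^k$ of $\gamma$, inherited from some component of $E_\gamma$, and an element $\delta\in\Gamma$, coming from some component of $E'$, whose cyclic subgroup $\langle\delta\rangle$ contains no nonzero power of $\gamma$. In the surface group $\Gamma$ the centralizer of any non-identity element is cyclic; hence if $\delta$ and $\gamma^k$ commuted they would lie in a common cyclic subgroup, forcing $\langle\delta\rangle$ to contain a nonzero power of $\gamma$, contradicting the hypothesis. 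So $\delta$ and $\gamma^k$ do not commute, $\langle\delta,\gamma^k\rangle$ is non-cyclic, and by Proposition \ref{subgroups} its Euler characteristic is at most $-1$. Thus $\mathbb{E}_{Z_F}^{\text{emb}}(n)=O(1/n)$ for every cross-merging $F$.

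Applying Lemma \ref{res and exp} to $E$ and discarding cross-merging contributions, and expanding $\mathbb{E}_{E_\gamma}(n)\cdot\mathbb{E}_{E'}(n)$ via the resolutions of each piece, one obtains
\[
    \mathbb{E}_E(n) = \sum_{f_\gamma,f'}\mathbb{E}_{Z_{f_\gamma}\sqcup Z_{f'}}^{\text{emb}}(n)+O(1/n),\qquad \mathbb{E}_{E_\gamma}(n)\cdot\mathbb{E}_{E'}(n)=\sum_{f_\gamma,f'}\mathbb{E}_{Z_{f_\gamma}}^{\text{emb}}(n)\cdot\mathbb{E}_{Z_{f'}}^{\text{emb}}(n).
\]
Only pairs $(f_\gamma,f')$ with $\chi^{\text{grp}}(Z_{f_\gamma}) = \chi^{\text{grp}}(Z_{f'}) = 0$ contribute to the constant term. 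For such pairs with $(f_\gamma,f')\neq(\mathrm{id},\mathrm{id})$, the disjoint union $Z_{f_\gamma}\sqcup Z_{f'}$ has strictly fewer cells than $E$ and still satisfies the hypotheses of the lemma (by the same commutativity argument, the cyclic $\pi_1^{\text{lab}}$ of a component of $Z_{f'}$ contains no nonzero power of $\gamma$). The inductive hypothesis then gives $\mathbb{E}_{Z_{f_\gamma}\sqcup Z_{f'}}^{\text{emb}}(n) = \mathbb{E}_{Z_{f_\gamma}}^{\text{emb}}(n)\cdot\mathbb{E}_{Z_{f'}}^{\text{emb}}(n) + O(1/n)$, and subtracting the two expansions reduces the assertion $a_0(E) = a_0(E_\gamma)\cdot a_0(E')$ to the single asymptotic factorization
\[
    \mathbb{E}_E(n) = \mathbb{E}_{E_\gamma}(n)\cdot\mathbb{E}_{E'}(n)+O(1/n).
\]

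This last equality \textemdash{} the asymptotic independence of the random variables $\#\mathrm{lifts}(E_\gamma)$ and $\#\mathrm{lifts}(E')$ on $\mathrm{Hom}(\Gamma,S_n)$ \textemdash{} is the main obstacle, and I expect Section \ref{lemprf} to establish it via a Frobenius-type formula for $\mathbb{E}_Y(n)$ (in the spirit of the character-sum formula for $\#\mathrm{Hom}(\Gamma,G)$ in the introduction's footnote), combined with an analysis exploiting the structural distinction between cyclic subgroups of $\Gamma$ containing powers of $\gamma$ and those that do not.
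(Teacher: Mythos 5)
Your reduction is correct but circular: given the inductive hypothesis that the lemma holds for all strictly smaller subcovers, the asymptotic factorization $\mathbb{E}_E(n)=\mathbb{E}_{E_\gamma}(n)\cdot\mathbb{E}_{E'}(n)+O(1/n)$ is \emph{equivalent} to $a_0(E)=a_0(E_\gamma)a_0(E')$ (comparing constant terms in exactly the way your two displayed sums dictate), so you have rephrased the target rather than reduced it. That rephrasing would only be progress if you could prove the factorization of $\mathbb{E}$ directly, and you do not: you explicitly defer it, speculating that the paper establishes it via a Frobenius-type character-sum formula. That speculation is wrong. The paper does not touch character theory at this point; instead it replaces the natural resolution by the special embedding-resolution $R^{\text{emb}}_Y$ coming from the Growing Process (Theorem \ref{sp res} / Corollary \ref{sp emb res}), whose crucial property is that every output with $\chi^{\text{grp}}=0$ is SBR and hence has $a_0=1$ (Theorem \ref{a zero one}). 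This turns $a_0(Y)$ into a plain count, $a_0(Y)=\#\{h\in R^{\text{emb}}_Y:\chi^{\text{grp}}(Z_h)=0\}$ (Proposition \ref{imp prop}), and the lemma becomes a combinatorial bijection between $R^{\text{emb}}_E(0)$ and $R^{\text{emb}}_{E_\gamma}(0)\times R^{\text{emb}}_{E'}(0)$, proved by carefully tracking the Growing Process on disjoint unions. With the natural resolution, as you use it, the terms $a_0(Z_f)$ are not $1$ and cannot be counted, which is precisely why the induction has nothing to bottom out on.

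Two smaller points. First, your observation that cross-merging surjections contribute $O(1/n)$ is essentially right and mirrors a step in the paper's Part 2, but as written it is imprecise: you treat $\gamma^k$ and $\delta$ as literally lying in the same subgroup and form $\langle\gamma^k,\delta\rangle$, whereas Lemma \ref{subgrp lem} only puts each of them in $\pi_1^{\text{lab}}(D)$ \emph{up to conjugation}; the correct argument (used in the paper) instead notes that if $\pi_1^{\text{lab}}(D)$ were cyclic of $\chi^{\text{grp}}=0$, then some nonzero power of $\delta$ would be conjugate to a nonzero power of $\gamma$, contradicting the hypothesis on $E'$. Second, and more to the point, even after this step is tightened the proposal still has no mechanism to verify the base/non-inductive content and therefore does not constitute a proof.
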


\begin{proof} [Proof of Theorem ~\ref{main thm}]
    Let $\gamma_{1},...,\gamma_{t}\in\mathcal{P}_{0}$
    be distinct and for each $i$ let $r_{i}\geq 1$ and $a_{i,1},...,a_{i,r_{i}}\geq1$ be integers.
    Recall that we need to show:
    \begin{equation}\label{31}
        \mathbb{E}_{n}\left[\prod_{i=1}^{t}\prod_{j=1}^{r_{i}}F_{n}(\gamma_{i}^{a_{i,j}})\right]=\prod_{i=1}^{t}\mathbb{E}_{n}\left[\prod_{j=1}^{r_{i}}F_{n}(\gamma_{i}^{a_{i,j}})\right] + O(1/n).
    \end{equation}
    with the implied constant depending on  $\gamma_{1},...,\gamma_{t}$ and the integers $a_{i,j}$.

    Let $R$ be the natural resolution (Definition \ref{nat res}) of:
    \[
        p:Y\stackrel{\text{def}}{=}\bigsqcup_{i=1}^{t}Y_{\gamma_{i}^{a_{i,1}}}\sqcup...\sqcup Y_{\gamma_{i}^{a_{i,r_{i}}}}\to X,
    \]
    and let $R^{0}$ be the subset of $R$ consisting of all the morphisms
    \[
        f:Y\to Z_{f},
    \]
    such that $\chi^{\text{grp}}(Z_{f})=0$.

    Denote
    $p_{ij}\stackrel{\text{def}}{=} p_{\gamma_{i}^{a_{i,j}}}$, i.e.
    $p_{ij}$ is the restricted covering map:
    \[
        p_{ij}:Y_{\gamma_{i}^{a_{i,j}}}\to X.
    \]
    The number of lifts of $p_{ij}$ to $X_{\phi}$ for $\phi\in\text{Hom}(\Gamma,S_{n})$
    is equal $\#\text{Fix}\left(\phi\left(\gamma_{i}^{a_{i,j}}\right)\right)$, hence for a uniformly random $\phi$ we have:

    \[
        \prod_{i,j}F_{n}(\gamma_{i}^{a_{i,j}})=\prod_{i,j}\#\text{lifts of }p_{ij}=\#\text{lifts of }p.
    \]
    From Lemma ~\ref{res and exp} we have:

    \[
        \mathbb{E}_{n}\left[\prod_{i,j}F_{n}(\gamma_{i}^{a_{i,j}})\right]=\mathbb{E}_{Y}(n)=\sum_{f\in R}\mathbb{E}_{Z_{f}}^{\text{emb}}(n).
    \]

    Let $f:Y\twoheadrightarrow Z_{f}$ be in $R$.
    From the asymptotic expansion (Theorem ~\ref{asym exp}) we get:
    \[
        \mathbb{E}_{Z_{f}}^{\text{emb}}(n) = n^{\chi^{\text{grp}}(Z_{f})}\left(a_{0}(Z_{f})+O(n^{-1})\right).
    \]

    As $f:Y\twoheadrightarrow Z_{f}$ is surjective, $Z_{f}$ cannot contain any trivial components, that is, components with trivial $\pi_{1}^{\text{lab}}$. This implies
    that the $\pi_{1}^{\text{lab}}$ of every component of $Z_{f}$ is non-trivial and hence has $\chi^{\text{grp}}\leq 0$, so that $\chi^{\text{grp}}(Z_{f})\leq 0$.
    The above is true for any $f\in R$ so that:
    \[
        \mathbb{E}_{n}\left[\prod_{i,j}F_{n}(\gamma_{i}^{a_{i,j}})\right]=\sum_{f\in R}\mathbb{E}_{Z_{f}}^{\text{emb}}(n)=\sum_{f\in R^{0}}a_{0}(Z_{f}) + O(1/n).
    \]

    If $f\in R^{0}$ then as $\chi^{\text{grp}}(Z_{f})=0$ we deduce that all components of $Z_{f}$ have $\chi^{\text{grp}} = 0$.
    This is due to the fact that the components of $Z_{f}$ have non-positive $\chi^{\text{grp}}$ but $\chi^{\text{grp}}$ is additive and $\chi^{\text{grp}}(Z_{f})=0$.

    For $f\in R^{0}$ the map $f:Y\twoheadrightarrow Z_{f}$ is a surjective morphism of subcovers,
    so from Lemma \ref{subgrp lem} every
    connected component of $Z_{f}$ has one of the $\gamma_{i}^{a_{i,j}}$
    in its $\pi_{1}^{\text{lab}}$ (up to conjugation).

    Suppose that some connected component $C$ of $Z_{f}$
    contains $\gamma_{i}^{a_{i,j}}$ and $\gamma_{i'}^{a_{i',j'}}$ in
    its $\pi_{1}^{\text{lab}}$ (up to conjugation) for $i\neq i'$. In this case $\gamma_{i}^{a_{i,j}}$
    and $\gamma_{i'}^{a_{i',j'}}$ are, up to conjugation, in the same Euler characteristic zero subgroup of
    $\Gamma$, namely a subgroup $G$ of $\Gamma$ in the conjugacy class $\pi_{1}^{\text{lab}}(C)$.
    By the classification of subgroups of $\Gamma$ we know that $G$ must
    be isomorphic to $\mathbb{Z}$. As $\gamma_{i}$ and $\gamma_{i'}$ are non-powers
    we get that $\gamma_{i}$ is conjugate to $\gamma_{i'}$ or its inverse.
    But our assumption is that $\gamma_{i}$ and $\gamma_{i'}$ are distinct in $\mathcal{P}_{0}$ which is a contradiction.

    This implies that one can write $Z_{f}=Z_{f,1}\sqcup...\sqcup Z_{f,t}$
    where $Z_{f,i}$ is the sub-subcover of $Z_{f}$ comprised of all
    connected components which have some $\gamma_{i}^{a_{i,j}}$ in their
    $\pi_{1}^{\text{lab}}$ (up to conjugation).

    Let $R_{i}$ be the natural resolution of:
    \[
        p|_{Y_{\gamma_{i}^{a_{i,1}}}\sqcup...\sqcup Y_{\gamma_{i}^{a_{i,r_{i}}}}}:Y_{\gamma_{i}^{a_{i,1}}}\sqcup...\sqcup Y_{\gamma_{i}^{a_{i,r_{i}}}}\to X,
    \]
    and let $R_{i}^{0}$ be the subset of $R_{i}$ comprised of
    all morphisms in $R_{i}$ with codomain which has $\chi^{\text{grp}}$ zero.

    The decomposition of every $Z_{f}$ as $Z_{f,1}\sqcup...\sqcup Z_{f,t}$ for $f\in R^{0}$ implies that the elements
    of $R^{0}$ are obtained by choosing for each $i$ an element $g_{i}\in R_{i}^{0}$,
    then taking a disjoint union to form an element
    \[
        g_{1}\sqcup...\sqcup g_{t}:Y\to Img_{1}\sqcup...\sqcup Img_{t}
    \]
    in $R^{0}$.

    To see this, first let $f\in R^{0}$.
    Note that by the definition of the $Z_{f,i}$ we know that:
    \[
        f^{-1}(Z_{f,i})=Y_{\gamma_{i}^{a_{i,1}}}\sqcup...\sqcup Y_{\gamma_{i}^{a_{i,r_{i}}}},
    \]
    as only these $Y_{\gamma_{i}^{a_{i,j}}}$ have a power of $\gamma_{i}$ in their $\pi_{1}^{\text{lab}}$ up to conjugation.
    Thus:
    \[
        f|_{Y_{\gamma_{i}^{a_{i,1}}}\sqcup...\sqcup Y_{\gamma_{i}^{a_{i,r_{i}}}}}:Y_{\gamma_{i}^{a_{i,1}}}\sqcup...\sqcup Y_{\gamma_{i}^{a_{i,r_{i}}}}\twoheadrightarrow Z_{f,i},
    \]
    is surjective.
    Because all components of $Z_{f}$ have $\chi^{\text{grp}}$ zero we deduce that
    \[
        \chi^{\text{grp}}(Z_{f,1}) = ... = \chi^{\text{grp}}(Z_{f,t}) = 0.
    \]
    Thus
    \[
        f|_{Y_{\gamma_{i}^{a_{i,1}}}\sqcup...\sqcup Y_{\gamma_{i}^{a_{i,r_{i}}}}} \in R^{0}_{i},
    \]
    for all $i$.

    Conversely, notice that each
    morphism formed by a disjoint union of elements from the $R_{i}^{0}$-s
    is trivially surjective, hence in $R$. Such a morphism is also in $R^{0}$ as $\chi^{\text{grp}}$ is additive.

    From Lemma ~\ref{imp lemma} (combined with a simple induction) we have:

    \[
        \begin{split}
            \sum_{f\in R^{0}}a_{0}(Z_{f}) & =\sum_{f\in R^{0}}a_{0}(Z_{f,1}\sqcup...\sqcup Z_{f,t})                                                                 \\
                                          & =\sum_{f\in R^{0}}a_{0}(Z_{f,1})...a_{0}(Z_{f,t})                                                                       \\
                                          & =\prod_{i}\left(\sum_{f\in R_{i}^{0}}a_{0}(Z_{f})\right)                                                                \\
                                          & =\prod_{i}\biggl(\mathbb{E}_{n}\left[F_{n}(\gamma_{i}^{a_{i,1}})...F_{n}(\gamma_{i}^{a_{i,r_{i}}})\right]+O(1/n)\biggr) \\
                                          & =\prod_{i}\mathbb{E}_{n}\left[F_{n}(\gamma_{i}^{a_{i,1}})...F_{n}(\gamma_{i}^{a_{i,r_{i}}})\right] + O(1/n),            \\
        \end{split}
    \]
    therefore:
    \[
        \begin{split}
            \mathbb{E}_{n}\left[\prod_{i,j}F_{n}(\gamma_{i}^{a_{i,j}})\right] & =\sum_{f\in R^{0}}a_{0}(Z_{f}) + O(1/n)                                                                      \\
                                                                              & =\prod_{i}\mathbb{E}_{n}\left[F_{n}(\gamma_{i}^{a_{i,1}})...F_{n}(\gamma_{i}^{a_{i,r_{i}}})\right] + O(1/n),
        \end{split}
    \]
    which is exactly \ref{31}.
\end{proof}

\subsection{Proof of Corollary \ref{main cor}}
Recall the statement in Corollary \ref{main cor}:
\begin{corollary*}
    Let $\gamma_{1},...,\gamma_{t}\in\mathcal{P}_{0}$
    be distinct and for each $i$ let $r_{i}\geq 1$ and
    \[
        a_{i,1},...,a_{i,r_{i}}\geq1,
    \]
    be integers. We have
    \[
        \mathbb{E}_{n}\left[\prod_{i=1}^{t}\prod_{j=1}^{r_{i}}F_{n}(\gamma_{i}^{a_{i,j}})\right]=\prod_{i=1}^{t}\underset{n\to\infty}{\lim}\mathbb{E}_{n}\left[\prod_{j=1}^{r_{i}}F_{n}(\gamma_{i}^{a_{i,j}})\right] + O(1/n),
    \]
    and
    \[
        \underset{n\to\infty}{\lim}\mathbb{E}_{n}\left[\prod_{i=1}^{t}\prod_{j=1}^{r_{i}}F_{n}(\gamma_{i}^{a_{i,j}})\right]=\prod_{i=1}^{t}\underset{n\to\infty}{\lim}\mathbb{E}_{n}\left[\prod_{j=1}^{r_{i}}F_{n}(\gamma_{i}^{a_{i,j}})\right].
    \]
\end{corollary*}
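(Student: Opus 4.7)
The plan is to derive Corollary \ref{main cor} directly from Theorem \ref{main thm}, together with a separate observation that, for each $i$, the single-class expectation $\mathbb{E}_{n}\left[\prod_{j=1}^{r_{i}}F_{n}(\gamma_{i}^{a_{i,j}})\right]$ already admits a finite limit as $n\to\infty$. Once both ingredients are in hand, the first identity follows by multiplying out the error terms, and the second identity is immediate by passing to the limit.

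To verify that each single-class expectation converges, I would re-run the resolution argument used in the proof of Theorem \ref{main thm}, applied now to the subcover $Y^{(i)}\stackrel{\text{def}}{=} Y_{\gamma_{i}^{a_{i,1}}}\sqcup\ldots\sqcup Y_{\gamma_{i}^{a_{i,r_{i}}}}$. Lemma \ref{res and exp} yields $\mathbb{E}_{n}\left[\prod_{j}F_{n}(\gamma_{i}^{a_{i,j}})\right] = \sum_{f\in R_{i}}\mathbb{E}_{Z_{f}}^{\text{emb}}(n)$, where $R_{i}$ is the natural resolution of $Y^{(i)}\to X$. Since every $f\in R_{i}$ is surjective, each $Z_{f}$ has only non-trivial components, forcing $\chi^{\text{grp}}(Z_{f})\leq 0$; the asymptotic expansion of Theorem \ref{asym exp} then shows that only morphisms in the subset $R_{i}^{0}\subseteq R_{i}$ (those with $\chi^{\text{grp}}(Z_{f})=0$) contribute to the leading order. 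Hence
\[
    \mathbb{E}_{n}\left[\prod_{j=1}^{r_{i}}F_{n}(\gamma_{i}^{a_{i,j}})\right] = \sum_{f\in R_{i}^{0}}a_{0}(Z_{f}) + O(1/n) = L_{i} + O(1/n),
\]
where $L_{i} \stackrel{\text{def}}{=}\sum_{f\in R_{i}^{0}}a_{0}(Z_{f})$ is a finite constant independent of $n$. Consequently $\underset{n\to\infty}{\lim}\mathbb{E}_{n}\left[\prod_{j=1}^{r_{i}}F_{n}(\gamma_{i}^{a_{i,j}})\right] = L_{i}$ exists and is finite.

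Substituting this expansion into Theorem \ref{main thm} gives
\[
    \mathbb{E}_{n}\left[\prod_{i,j}F_{n}(\gamma_{i}^{a_{i,j}})\right] = \prod_{i=1}^{t}\bigl(L_{i} + O(1/n)\bigr) + O(1/n) = \prod_{i=1}^{t}L_{i} + O(1/n),
\]
where expanding the finite product and collecting all error contributions into a single $O(1/n)$ term produces an implicit constant depending on $t$, on the $L_{i}$-s, and therefore on $\gamma_{1},\ldots,\gamma_{t}$ and the exponents $a_{i,j}$. Replacing each $L_{i}$ by its description as a limit yields the first displayed identity of the corollary; taking $n\to\infty$ on both sides of that identity yields the second. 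There is no genuine obstacle here, since everything amounts to bookkeeping on top of Theorem \ref{main thm}; the only mild point is the separate convergence check for each individual factor, and that is an immediate byproduct of the same resolution-plus-asymptotic-expansion machinery that powers the main theorem.
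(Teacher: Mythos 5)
Your proof is correct and takes essentially the same approach as the paper: both rest on showing that each single-class expectation $\mathbb{E}_{n}\bigl[\prod_{j}F_{n}(\gamma_{i}^{a_{i,j}})\bigr]$ equals $\sum_{f\in R_{i}^{0}}a_{0}(Z_{f})+O(1/n)$ via Lemma \ref{res and exp} and Theorem \ref{asym exp}. The only cosmetic difference is that you then invoke Theorem \ref{main thm} as a black box and multiply out the $(L_{i}+O(1/n))$ factors, whereas the paper reuses the exact identity $\sum_{f\in R^{0}}a_{0}(Z_{f})=\prod_{i}\sum_{f\in R_{i}^{0}}a_{0}(Z_{f})$ already established inside the proof of Theorem \ref{main thm}.
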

The proof is rather straightforward.
\begin{proof}[Proof of Corollary \ref{main cor}]
    The second equality follows by taking $n\to\infty$ in Theorem \ref{main thm}.
    As for the first equality, in the setting of the previous proof (the proof of Theorem \ref{main thm}) we have
    \[
        \mathbb{E}_{n}\left[\prod_{i,j}F_{n}(\gamma_{i}^{a_{i,j}})\right]=\sum_{f\in R}\mathbb{E}_{Z_{f}}^{\text{emb}}(n)=\sum_{f\in R^{0}}a_{0}(Z_{f}) + O(1/n),
    \]
    and
    \[
        \sum_{f\in R^{0}}a_{0}(Z_{f})=\prod_{i}\left(\sum_{f\in R_{i}^{0}}a_{0}(Z_{f})\right).
    \]
    But
    \[
        \sum_{f\in R_{i}^{0}}a_{0}(Z_{f}) = \underset{n\to\infty}{\lim}\mathbb{E}_{n}\left[F_{n}(\gamma_{i}^{a_{i,1}})...F_{n}(\gamma_{i}^{a_{i,r_{i}}})\right].
    \]
\end{proof}

\subsection{Proof of Corollary \ref{moments cor}}
Before the proof, recall the statement of Corollary \ref{moments cor}
\begin{corollary*}
    In the same setting as Theorem \ref{main thm} for each positive integer $k$ and $1\leq i \leq t$ let $Z_{1/k}^{(i)}$ be
    a Poisson random variable with parameter $1/k$, such that all $Z$-s are independent (in the strong sense, not just pairwise independent). Define:
    \[
        X^{(i)}_{a_{i,1},...,a_{i,r_{i}}} = \prod_{j=1}^{r_{i}}\sum_{k|a_{i,j}}kZ_{1/k}^{(i)},
    \]
    and note that for different $i$ the variables $X^{(i)}_{a_{i,1},...,a_{i,r_{i}}}$ are independent.
    Then the cross moments of:
    \[
        \prod_{j=1}^{r_{1}}F_{n}(\gamma_{1}^{a_{1,j}}),...,\prod_{j=1}^{r_{t}}F_{n}(\gamma_{t}^{a_{t,j}})
    \]
    and of:
    \[
        X^{(1)}_{a_{1,1},...,a_{1,r_{1}}},...,X^{(t)}_{a_{t,1},...,a_{t,r_{t}}}
    \]
    are asymptotically equal. That is, for every $s_{1},...,s_{t}\in \mathbb{Z}_{\geq 1}$ we have:
    \begin{multline*}
        \underset{n\to\infty}{\lim}\mathbb{E}_{n}\left[\left(\prod_{j=1}^{r_{1}}F_{n}(\gamma_{1}^{a_{1,j}})\right)^{s_{1}},...,\left(\prod_{j=1}^{r_{t}}F_{n}(\gamma_{t}^{a_{t,j}})\right)^{s_{t}}\right] = \\
        \mathbb{E}\left[\left(X^{(1)}_{a_{1,1},...,a_{1,r_{1}}}\right)^{s_{1}}\cdot...\cdot \left(X^{(t)}_{a_{t,1},...,a_{t,r_{t}}}\right)^{s_{t}}\right] = \\
        \mathbb{E}\left[\left(X^{(1)}_{a_{1,1},...,a_{1,r_{1}}}\right)^{s_{1}}\right] \cdot...\cdot\mathbb{E}\left[\left(X^{(t)}_{a_{t,1},...,a_{t,r_{t}}}\right)^{s_{t}}\right].
    \end{multline*}
\end{corollary*}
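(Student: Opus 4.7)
My plan is to reduce the cross-moment identity to Theorem \ref{main thm} on the one hand, and to an iterated form of Theorem \ref{tpz} (together with Theorem \ref{cycle thm}) on the other. I first rewrite the left-hand side of (\ref{cross eq}) by absorbing the outer $s_i$-th powers into the products:
\[
\prod_{i=1}^{t}\Bigl(\prod_{j=1}^{r_i} F_n(\gamma_i^{a_{i,j}})\Bigr)^{s_i} = \prod_{i=1}^{t}\prod_{j=1}^{r_i}\prod_{k=1}^{s_i} F_n(\gamma_i^{a_{i,j}}),
\]
which is still a product over the distinct $\gamma_i\in\mathcal{P}_0$ of $F_n$-values at powers of $\gamma_i$, only with each exponent $a_{i,j}$ listed $s_i$ times. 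Applying Theorem \ref{main thm} to this expanded index set (its hypotheses continue to hold) immediately gives
\[
\mathbb{E}_n\!\left[\prod_{i=1}^{t}\!\Bigl(\prod_{j=1}^{r_i} F_n(\gamma_i^{a_{i,j}})\Bigr)^{s_i}\right] = \prod_{i=1}^{t}\mathbb{E}_n\!\left[\!\Bigl(\prod_{j=1}^{r_i} F_n(\gamma_i^{a_{i,j}})\Bigr)^{s_i}\right] + O(1/n).
\]
Taking $n\to\infty$, the problem reduces to identifying, for each fixed $i$, the limit of $\mathbb{E}_n[(\prod_j F_n(\gamma_i^{a_{i,j}}))^{s_i}]$ with $\mathbb{E}[(X^{(i)}_{a_{i,1},\ldots,a_{i,r_i}})^{s_i}]$, and then using independence of the $X^{(i)}$ to recombine the factors.

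For the single-$\gamma_i$ identification, I will use observation (1) from the introduction to write $F_n(\gamma_i^{a})=\sum_{d\mid a} d\,C_{n,d}(\gamma_i)$, so that $(\prod_j F_n(\gamma_i^{a_{i,j}}))^{s_i}$ is a fixed polynomial in the cycle counts $C_{n,d}(\gamma_i)$ with $d\le D_i:=\max_j a_{i,j}$. By Theorem \ref{cycle thm} the vector $(C_{n,d}(\gamma_i))_{d\le D_i}$ converges in distribution to independent Poissons $(Z^{(i)}_{1/d})_{d\le D_i}$; hence by continuous-mapping the vector $(F_n(\gamma_i^{a_{i,j}}))_{j}$ converges in distribution to $(\sum_{d\mid a_{i,j}} d\, Z^{(i)}_{1/d})_{j}$. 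To pass to convergence of the particular polynomial moment we want, I use that every joint moment $\mathbb{E}_n[\prod_{l} F_n(\gamma_i^{c_l})]$ is bounded uniformly in $n$: this follows by the same subcover/resolution argument that drove the proof of Theorem \ref{main thm}, since Lemma \ref{res and exp} expresses such a moment as a sum of $\mathbb{E}^{\mathrm{emb}}_{Z_f}(n)$ and Theorem \ref{asym exp} gives a bounded leading constant plus $O(1/n)$. Uniform boundedness of a fixed polynomial in the $F_n(\gamma_i^{a_{i,j}})$ together with the distributional convergence yields, via the method of moments,
\[
\lim_{n\to\infty}\mathbb{E}_n\!\left[\!\Bigl(\prod_{j=1}^{r_i} F_n(\gamma_i^{a_{i,j}})\Bigr)^{s_i}\right] = \mathbb{E}\!\left[\Bigl(\prod_{j=1}^{r_i}\sum_{d\mid a_{i,j}} d\, Z^{(i)}_{1/d}\Bigr)^{s_i}\right] = \mathbb{E}\!\left[\bigl(X^{(i)}_{a_{i,1},\ldots,a_{i,r_i}}\bigr)^{s_i}\right].
\]

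Plugging this back into the factorized expression from the first step, and using that the $X^{(i)}$ are built from pairwise disjoint families of independent Poissons and are therefore mutually independent, the three expressions of (\ref{cross eq}) coincide: the first equals $\prod_i \mathbb{E}[(X^{(i)})^{s_i}]$ by Steps 1--2, and this equals $\mathbb{E}[\prod_i (X^{(i)})^{s_i}]$ by independence. The main obstacle I anticipate is exactly the leap from distributional convergence in Theorem \ref{cycle thm} to convergence of the specific polynomial moments needed here, because the $F_n(\gamma_i^{a_{i,j}})$ are a priori unbounded (they can be as large as $n$); this is resolved by the uniform moment bounds supplied by the asymptotic-expansion machinery of Section \ref{bk}, which is really the workhorse behind the whole argument.
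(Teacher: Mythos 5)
Your proposal is essentially the paper's own argument: both reduce to the case $s_1=\cdots=s_t=1$ by absorbing the outer powers into a longer list of exponents, then factor across distinct $\gamma_i$ via Theorem \ref{main thm} (the paper uses its repackaged form, Corollary \ref{main cor}), then identify the single-$\gamma_i$ limit using $F_n(\gamma^a)=\sum_{d\mid a}dC_{n,d}(\gamma)$ and Theorem \ref{cycle thm}, and finally recombine using independence of the $X^{(i)}$.

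One place where you are actually \emph{more} careful than the paper: to pass from the distributional convergence stated in Theorem \ref{cycle thm} to convergence of the polynomial moments, you supply a genuine justification (uniform boundedness of all joint moments of the $F_n(\gamma_i^{c_l})$, coming from the resolution decomposition together with $\chi^{\mathrm{grp}}(Z_f)\le 0$ and Theorem \ref{asym exp}, hence uniform integrability). The paper simply reads off the moment limit from Theorem \ref{cycle thm} without comment. Two small remarks: you call the step from distributional convergence to moment convergence ``the method of moments'', but it is the converse of that method; the correct name for the tool you are using is uniform integrability. Also, invoking the continuous-mapping theorem to get distributional convergence of the vector $(F_n(\gamma_i^{a_{i,j}}))_j$ is fine but superfluous once you have the uniform moment bounds in hand, since you could just expand everything into monomials in the $C_{n,d}(\gamma_i)$ and apply the moment bounds directly, which is closer to what the paper writes.
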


Once more, the proof is quite simple.
\begin{proof}[Proof of Corollary \ref{moments cor}]
    Let $\gamma_{1},...,\gamma_{t}\in \mathcal{P}_{0}$ be distinct, and let $r_{i}\geq 1$ and $a_{i,1},...,a_{i,r_{i}}\geq1$
    be integers.
    Define:
    \[
        X_{n}^{(i)} = \prod_{j=1}^{r_{i}}F_{n}\left(\gamma_{i}^{a_{i,j}}\right).
    \]
    Let $s_{1},...,s_{t}\in\mathbb{Z}_{\geq1}^{t}$. Note that $\left(X_{n}^{(i)}\right)^{s_{i}}$ is the same expression as if we were to multiply $r_{i}$ by $s_{i}$ and for each $1\leq j\leq r_{i}$ take $s_{i}$ copies of $F_{n}\left(\gamma^{a_{i,j}}\right)$ instead of the one in the original $X_{n}^{(i)}$. Due to this fact, we can assume w.l.g. that $s_{1}=...=s_{t}=1$, so it suffices to show that:

    \begin{multline*}
        \underset{n\to\infty}{\lim}\mathbb{E}_{n}\left[\prod_{i=1}^{t}\prod_{j=1}^{r_{i}}F_{n}(\gamma_{i}^{a_{i,j}})\right] =
        \underset{n\to\infty}{\lim}\mathbb{E}_{n}\left[\prod_{i=1}^{t}X_{n}^{(i)}\right] = \\
        \mathbb{E}\left[\prod_{i=1}^{t}X^{(i)}_{a_{i,1},...,a_{i,r_{i}}}\right]=
        \mathbb{E}\left[\prod_{i=1}^{t}\prod_{j=1}^{r_{i}}\sum_{d|a_{i,j}}dZ_{1/d}^{(i)}\right].
    \end{multline*}

    Recall that $F_{n}(\gamma_{i}^{a_{i,j}})=\sum_{d|a_{i,j}}dC_{n,d}(\gamma_{i})$,
    where $C_{n,d}(\gamma_{i})$ is the number of $d$-cycles in $\phi(\gamma_{i})$
    for a random $\phi\in\text{Hom}(\Gamma,S_{n})$. Thus:

    \[
        \begin{split}
            \mathbb{E}_{n}\left[\prod_{j=1}^{r_{i}}F_{n}(\gamma_{i}^{a_{i,j}})\right] & =\mathbb{E}_{n}\left[\prod_{j=1}^{r_{i}}\sum_{d|a_{i,j}}dC_{n,d}(\gamma_{i})\right]                                      \\
                                                                                      & =\mathbb{E}_{n}\left[\sum_{d_{j}|a_{i,j}} d_{1}...d_{r_{i}}C_{n,d_{1}}(\gamma_{i})...C_{n,d_{r_{i}}}(\gamma_{i})\right]  \\
                                                                                      & =\sum_{d_{j}|a_{i,j}} d_{1}...d_{r_{i}}\mathbb{E}_{n}\left[C_{n,d_{1}}(\gamma_{i})...C_{n,d_{r_{i}}}(\gamma_{i})\right].
        \end{split}
    \]

    Using Theorem \ref{cycle thm} we have:
    \[
        \underset{n\to\infty}{\lim}\mathbb{E}_{n}\left[C_{n,d_{1}}(\gamma_{i})...C_{n,d_{r_{i}}}(\gamma_{i})\right] =
        \mathbb{E}\left[Z_{1/d_{1}}...Z_{1/d_{r_{i}}}\right],
    \]
    which gives:
    \[
        \begin{split}
            \underset{n\to\infty}{\lim}\mathbb{E}_{n}\left[\prod_{j=1}^{r_{i}}F_{n}(\gamma_{i}^{a_{i,j}})\right]
             & =\sum_{d_{j}|a_{i,j}} d_{1}...d_{r_{i}}\underset{n\to\infty}{\lim}\mathbb{E}_{n}\left[C_{n,d_{1}}(\gamma_{i})...C_{n,d_{r_{i}}}(\gamma_{i})\right] \\
             & =\sum_{d_{j}|a_{i,j}} d_{1}...d_{r_{i}}\mathbb{E}\left[Z_{1/d_{1}}...Z_{1/d_{r_{i}}}\right]                                                        \\
             & =\mathbb{E}\left[\prod_{j=1}^{r_{i}}\sum_{d|a_{i,j}}dZ_{1/d}\right].
        \end{split}
    \]
    Applying the second equality in Corollary \ref{main cor} we get:
    \[
        \begin{split}
            \underset{n\to\infty}{\lim}\mathbb{E}_{n}\left[\prod_{i=1}^{t}\prod_{j=1}^{r_{i}}F_{n}(\gamma_{i}^{a_{i,j}})\right]
             & =\prod_{i=1}^{t}\underset{n\to\infty}{\lim}\mathbb{E}_{n}\left[\prod_{j=1}^{r_{i}}F_{n}(\gamma_{i}^{a_{i,j}})\right] \\
             & =\prod_{i=1}^{t}\mathbb{E}\left[\prod_{j=1}^{r_{i}}\sum_{d|a_{i,j}}dZ_{1/d}\right]                                   \\
             & =\mathbb{E}\left[\prod_{i=1}^{t}\prod_{j=1}^{r_{i}}\sum_{d|a_{i,j}}dZ^{(i)}_{1/d}\right],
        \end{split}
    \]
    where the $Z^{(i)}_{1/d}$ are as in the statement of the Corollary.
\end{proof}

\section{Proving Theorems \ref{cyc ext} and \ref{dist thm}}\label{prove dist cor}
Recall the statement of Theorem \ref{cyc ext}:
\begin{thm*}
    Let $\gamma_{1},...,\gamma_{t}\in \mathcal{P}_{0}$ be distinct. For each positive integer $k$ and $1\leq i \leq t$ let $Z_{1/k}^{(i)}$ be
    a Poisson random variable with parameter $1/k$, such that all $Z$-s are independent (in the strong sense, not just pairwise independent).
    In addition for each $i$ let $r_{i}\geq 1$ be an integer.
    Then:
    \begin{multline*}
        \left(C_{n,1}(\gamma_{1}),...,C_{n,r_{1}}(\gamma_{1}),...,C_{n,1}(\gamma_{t}),...,C_{n,r_{t}}(\gamma_{t})\right)\\
        \stackrel{\text{dis}}{\rightarrow}
        \left(Z^{(1)}_{1},...,Z^{(1)}_{1/r_{1}},...,Z^{(t)}_{1},...,Z^{(t)}_{1/r_{t}}\right).
    \end{multline*}
    That is, the number of bounded cycles for different elements in $\mathcal{P}_{0}$ converge jointly to their respective Poisson variable. In particular, the variables in the LHS of the above equation are asymptotically independent.
\end{thm*}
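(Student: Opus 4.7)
The plan is to prove Theorem \ref{cyc ext} by the method of moments. Since a finite collection of independent Poisson variables is moment-determined (the moments satisfy Carleman's condition), it suffices to show that every joint mixed moment
\[
    \mathbb{E}_{n}\!\left[\prod_{i=1}^{t}\prod_{d=1}^{r_{i}}C_{n,d}(\gamma_{i})^{s_{i,d}}\right]
\]
converges, as $n\to\infty$, to the corresponding joint moment of the target vector $\bigl(Z^{(1)}_{1},\ldots,Z^{(1)}_{1/r_{1}},\ldots,Z^{(t)}_{1},\ldots,Z^{(t)}_{1/r_{t}}\bigr)$, which equals $\prod_{i,d}\mathbb{E}[(Z^{(i)}_{1/d})^{s_{i,d}}]$ by independence. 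In particular, moment convergence together with integrality of the $C_{n,d}$'s will yield distributional convergence.

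First, I would rewrite the cycle-counting variables in terms of fixed-point counts via Möbius inversion. From the identity $F_{n}(\gamma_{i}^{a})=\sum_{d\mid a}d\,C_{n,d}(\gamma_{i})$ noted in the introduction, standard Möbius inversion gives
\[
    d\cdot C_{n,d}(\gamma_{i}) \;=\; \sum_{k\mid d}\mu(d/k)\,F_{n}(\gamma_{i}^{k}).
\]
Substituting this into the product $\prod_{i,d}C_{n,d}(\gamma_{i})^{s_{i,d}}$ and expanding, each term is a scalar multiple of a product of the form $\prod_{i=1}^{t}\prod_{j=1}^{r'_{i}}F_{n}(\gamma_{i}^{b_{i,j}})$ for suitable $b_{i,j}$, grouped by the index $i$.

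Now I would apply Corollary \ref{moments cor} to each such product. That corollary gives
\[
    \lim_{n\to\infty}\mathbb{E}_{n}\!\left[\prod_{i=1}^{t}\prod_{j=1}^{r'_{i}}F_{n}(\gamma_{i}^{b_{i,j}})\right] = \prod_{i=1}^{t}\mathbb{E}\!\left[\prod_{j=1}^{r'_{i}}\sum_{k\mid b_{i,j}}k\,Z^{(i)}_{1/k}\right],
\]
so after re-collecting the Möbius expansion, the limit of the mixed moment factorizes as $\prod_{i}\lim_{n}\mathbb{E}_{n}\!\left[\prod_{d}C_{n,d}(\gamma_{i})^{s_{i,d}}\right]$. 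Thus independence across distinct $\gamma_{i}$ is reduced to the single-$\gamma$ statement, which is already supplied by Theorem \ref{cycle thm}: for each $i$ the limit equals $\prod_{d}\mathbb{E}[(Z^{(i)}_{1/d})^{s_{i,d}}]$.

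Finally, one should sanity-check that the Möbius arithmetic is consistent, i.e.\ that applying the same inversion to the limiting Poissons recovers the correct marginals. A direct swap of sums shows
\[
    \sum_{k\mid d}\mu(d/k)\sum_{m\mid k}m\,Z^{(i)}_{1/m} \;=\; \sum_{m\mid d}m\,Z^{(i)}_{1/m}\!\!\sum_{\substack{k:\,m\mid k\mid d}}\!\!\mu(d/k) \;=\; d\cdot Z^{(i)}_{1/d},
\]
since the inner sum collapses to $[m=d]$, confirming that the limit of $C_{n,d}(\gamma_{i})$ is $Z^{(i)}_{1/d}$. I expect the main (minor) obstacle to be purely bookkeeping in the Möbius-expansion step, making sure that the expansion of $\prod_{i,d}C_{n,d}(\gamma_{i})^{s_{i,d}}$ really does split cleanly by the index $i$ so that Corollary \ref{moments cor} applies term-by-term; once this is carried out, everything else follows from results already in the paper.
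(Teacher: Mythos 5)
Your proposal is correct and takes essentially the same route as the paper: apply the multivariate method of moments (using the moment-determinacy of a finite vector of independent Poissons), convert cycle counts to fixed-point counts by M\"obius inversion, and invoke Corollary \ref{moments cor} to identify and factorize the limiting mixed moments. The only cosmetic differences are that you cite Carleman's condition rather than the moment-generating-function criterion for moment-determinacy, and you appeal to Theorem \ref{cycle thm} for the single-$\gamma$ marginal limits whereas the paper works them out directly via a second M\"obius inversion on the $X^{(i)}_a$ variables; the substance of the argument is identical.
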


Also recall the statement of Theorem \ref{dist thm}:
\begin{thm*}
    In the same setting as Theorem \ref{main thm}, as $n\to\infty$ we have:
    \begin{multline*}
        \left(\prod_{j=1}^{r_{1}}F_{n}(\gamma_{1}^{a_{1,j}}),...,\prod_{j=1}^{r_{t}}F_{n}(\gamma_{t}^{a_{t,j}})\right)
        \stackrel{\text{dis}}{\rightarrow}
        \left(X^{(1)}_{a_{1,1},...,a_{1,r_{1}}},...,X^{(t)}_{a_{t,1},...,a_{t,r_{t}}}\right) = \\
        \left(\prod_{j=1}^{r_{1}}\sum_{k|a_{1,j}}kZ_{1/k}^{(1)},...,\prod_{j=1}^{r_{t}}\sum_{k|a_{t,j}}kZ_{1/k}^{(t)}\right).
    \end{multline*}
    In particular, the variables
    \[
        \prod_{j=1}^{r_{1}}F_{n}(\gamma_{1}^{a_{1,j}}),...,\prod_{j=1}^{r_{t}}F_{n}(\gamma_{t}^{a_{t,j}})
    \]
    are asymptotically independent as $n\to \infty$, and:
    \[
        \prod_{i=1}^{t}\prod_{j=1}^{r_{i}}F_{n}(\gamma_{i}^{a_{i,j}})\stackrel{\text{dis}}{\to}\prod_{i=1}^{t}\prod_{j=1}^{r_{i}}\sum_{k|a_{i,j}}kZ_{1/k}^{(i)}.
    \]
\end{thm*}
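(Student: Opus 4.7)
The plan is to prove Theorem \ref{cyc ext} first by the method of moments, and then deduce Theorem \ref{dist thm} from it by the continuous mapping theorem. The bridge between the $F$'s and the $C$'s is the divisor identity $F_n(\gamma^a) = \sum_{d\mid a} d\, C_{n,d}(\gamma)$ together with its M\"obius inverse $d\, C_{n,d}(\gamma) = \sum_{d'\mid d}\mu(d/d')\, F_n(\gamma^{d'})$, which gives an explicit, invertible linear dictionary between polynomial expressions in the $F$'s and polynomial expressions in the $C$'s.

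\textbf{Joint moment convergence for Theorem \ref{cyc ext}.} Using this dictionary, any joint monomial $\prod_{i,d} C_{n,d}(\gamma_i)^{s_{i,d}}$ expands into a fixed rational linear combination of products $\prod_{i}\prod_{k} F_n(\gamma_i^{b_{i,k}})$ already grouped by the index $i$. Corollary \ref{moments cor} asserts precisely that the expectation of such a grouped product asymptotically factors across the distinct classes $\gamma_1,\ldots,\gamma_t$, matching the analogous product of independent Poisson expectations. Within a fixed $i$, Theorem \ref{cycle thm} supplies the joint convergence of $(C_{n,d}(\gamma_i))_d$ to the independent Poissons $(Z^{(i)}_{1/d})_d$. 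Combining these two ingredients, I would conclude that
\[
    \lim_{n\to\infty}\mathbb{E}_n\!\left[\prod_{i,d} C_{n,d}(\gamma_i)^{s_{i,d}}\right] = \prod_{i,d}\mathbb{E}\!\left[\left(Z^{(i)}_{1/d}\right)^{s_{i,d}}\right]
\]
for every finite tuple of exponents $s_{i,d}\in\mathbb{Z}_{\geq 0}$. Since any finite vector of independent Poisson variables is moment-determined (the Poisson mgf is entire), this moment convergence upgrades to the joint convergence in distribution claimed in Theorem \ref{cyc ext}.

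\textbf{Theorem \ref{dist thm} via continuous mapping, and the key difficulty.} With Theorem \ref{cyc ext} in hand, set $D=\max_{i,j}a_{i,j}$. The divisor identity presents each $\prod_{j} F_n(\gamma_i^{a_{i,j}})$ as a fixed polynomial in $(C_{n,d}(\gamma_i))_{1\leq d\leq D}$, so the random vector on the left-hand side of (\ref{dist eq}) is a continuous (indeed polynomial) function of the finite vector $(C_{n,d}(\gamma_i))_{1\leq i\leq t,\,1\leq d\leq D}$. The continuous mapping theorem applied to Theorem \ref{cyc ext} then delivers the asserted joint convergence in distribution to $(X^{(1)}_{a_{1,1},\ldots,a_{1,r_1}},\ldots,X^{(t)}_{a_{t,1},\ldots,a_{t,r_t}})$; since the limit coordinates depend on pairwise-disjoint independent families $(Z^{(i)}_{1/d})_d$ (one family per $i$), they are mutually independent, yielding both the asymptotic independence and the product-of-distributions statement. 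The principal technical point to be careful about is making the method of moments actually go through: one must verify that the convergence in distribution of the $C$'s lifts to convergence of their polynomial moments and that the Poisson target is moment-determined. The moment-determinacy is free, and as a safety net the whole argument can alternatively be phrased in terms of joint falling factorial moments of the $C_{n,d}(\gamma_i)$'s, whose convergence to $\prod_{i,d}(1/d)^{m_{i,d}}$ is itself the classical criterion for joint convergence to independent Poissons, bypassing any need for uniform integrability.
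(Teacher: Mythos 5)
Your proposal is correct and takes essentially the same route as the paper: first prove Theorem \ref{cyc ext} by the method of moments, using the M\"obius-inversion dictionary between the $C_{n,d}$'s and the $F_n$'s together with Corollary \ref{moments cor} and moment-determinacy of independent Poissons, and then deduce Theorem \ref{dist thm} from Theorem \ref{cyc ext} by the continuous mapping theorem applied to the polynomial map encoding $F_n(\gamma^a)=\sum_{d\mid a}dC_{n,d}(\gamma)$. Your parenthetical worry about lifting convergence in distribution of the $C$'s to convergence of moments is unnecessary for this step — the needed moment convergence is exactly what Corollary \ref{moments cor} already provides, so no uniform-integrability argument (or falling-factorial detour) is required.
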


After seeing Corollary \ref{moments cor} one is tempted to use the \textit{method of moments} (see Subsection \ref{MoM sec}) to immediately conclude Theorem \ref{dist thm}. However, to do so one would need to prove that the distributions of variables of the form $X^{(i)}_{a_{i,1},...,a_{i,r_{i}}}$ on $\mathbb{N}$ are determined by their moments. Attempting to do so one quickly notices that the moments of the $X_{a_{1},...,a_{r}}$ grow very fast. For example, when $r=3$ and $a_{1}=a_{2}=a_{3}=1$ we have $X_{1,1,1} = Z_{1}^{3}$, whose moments grow like $\left(\frac{n}{\log{n}}\right)^{3n}$ up to an exponential factor.
This poses a problem when trying to apply standard tests for moment-determinacy such as the existence of a moment-generating function in a neighborhood of $0$ (see Theorem \ref{crit}) or Carlman's condition (see \cite[Theorem 2.7]{carlman}).

Therefore our proof of Theorem \ref{dist thm} circumvents this issue by using Corollary \ref{moments cor} to first prove Theorem \ref{cyc ext} using the method of moments, then using Theorem \ref{cyc ext} to prove Theorem \ref{dist thm}.

Before the proof of Theorem \ref{cyc ext}
we give some relevant background on the method of moments.
\subsection{The Method of Moments}\label{MoM sec}
First, we define a measure determined by its moments. The definition comes from \cite[Ex. 30.5]{BP}:
\begin{defn}[Measure Determined by its Moments]
    Let $\mu$ be probability distribution on the space $\mathbb{R}^{m}$. We say that $\mu$ is \textit{determined by its moments} if any other probability measure $\nu$ on $\mathbb{R}^{m}$
    that satisfies:
    \[
        \int_{\mathbb{R}^{m}} x_{1}^{r_{1}}\cdot\cdot\cdot x_{m}^{r_{m}} d\mu = \int_{\mathbb{R}^{m}} x_{1}^{r_{1}}\cdot\cdot\cdot x_{m}^{r_{m}} d\nu,
    \]
    for all positive integers $r_{1},...,r_{m}$ is equal to $\mu$. That is, $\mu$ is the only probability measure on $\mathbb{R}^{m}$ which has moments:
    \[
        \int_{\mathbb{R}^{m}} x_{1}^{r_{1}}\cdot\cdot\cdot x_{m}^{r_{m}} d\mu.
    \]
\end{defn}

Given a probability distribution on $\mathbb{R}^{m}$, the problem of deciding whether it is determined by its moments has been well studied.
A simple test for moment-determinacy in the $m=1$ case is the following:
\begin{thm}[Theorem 30.1 in \cite{BP}]\label{crit}
    Let $\mu$ be probability distribution on $\mathbb{R}$ with finite moments $m_{k} = \int_{\mathbb{R}}x^{k}d\mu(x)$. If the moment-generating function of $\mu$ exists in a neighborhood of $0$ then
    $\mu$ is determined by its moments. Restated, if there is a positive $r$ for which the series $\sum_{k\geq 0} m_{k}r^{k}/k!$ converges, then $\mu$ is determined by its moments.
\end{thm}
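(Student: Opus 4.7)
The plan is to prove this via analytic continuation of the characteristic function, invoking the classical uniqueness theorem for Fourier transforms of probability measures. Since $\mu$ is determined by $\varphi_\mu(t) := \int_{\mathbb{R}} e^{itx}\,d\mu(x)$, it suffices to show that the moments $\{m_k\}$ determine $\varphi_\mu$ as a function on $\mathbb{R}$.

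First I would promote the hypothesis to the integrability statement $\int e^{s|x|}\,d\mu(x) < \infty$ for some $s > 0$. If the MGF $M(t) := \int e^{tx}\,d\mu(x)$ is finite on $(-r_0, r_0)$, this is immediate since $e^{s|x|} \le e^{sx} + e^{-sx}$ for any $0 < s < r_0$. If instead we start from convergence of $\sum_k m_k r^k/k!$, I would use Cauchy--Schwarz in the form $M_{2k+1} \le \sqrt{m_{2k}\, m_{2k+2}}$ (together with $M_{2k} = m_{2k}$, where $M_k := \int |x|^k\,d\mu$) to conclude that $\sum_k M_k s^k/k!$ converges for $s$ slightly smaller than $r$, and then apply monotone convergence to get $\int e^{s|x|}\,d\mu = \sum_k M_k s^k/k! < \infty$.

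Next I would extend the characteristic function complex-analytically. Define $\Phi(z) := \int e^{izx}\,d\mu(x)$ for $|\operatorname{Im} z| < s$. The bound $|e^{izx}| \le e^{|\operatorname{Im} z|\cdot|x|}$ dominates the integrand by an $L^1$ function, so $\Phi$ is well defined on the strip and, by differentiation under the integral (justified by the same $e^{s|x|}$ majorant on $|x\, e^{izx}|$), holomorphic there. Expanding $e^{izx}$ as a power series and interchanging sum and integral via the bound $\sum_k |z|^k |x|^k/k! = e^{|z||x|}$ gives the Taylor expansion
\[
    \Phi(z) = \sum_{k \ge 0} \frac{i^k m_k}{k!}\, z^k,
\]
so the moments $\{m_k\}$ completely determine $\Phi$ on a neighborhood of $0$, and hence by the identity theorem on the whole strip.

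Finally, any other probability measure $\nu$ on $\mathbb{R}$ with the same moments automatically satisfies the same hypothesis (the hypothesis is phrased purely in terms of the $m_k$) and therefore produces its own $\Psi$, holomorphic on the same strip, with the identical Taylor series at $0$. By the identity theorem $\Psi \equiv \Phi$ on the strip, in particular $\varphi_\mu = \varphi_\nu$ on $\mathbb{R}$, and Fourier uniqueness forces $\mu = \nu$. The only delicate point in the whole argument is the absolute-moments bookkeeping in the first step, where one must be careful that a condition on \emph{signed} moments really does yield exponential integrability; everything else is a standard identity-theorem argument with no analytic hurdles specific to this paper.
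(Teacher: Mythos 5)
The paper does not prove this statement; it is quoted verbatim as Theorem 30.1 of Billingsley \cite{BP} and used as a black box. There is therefore no in-paper argument to compare against, and the right question is only whether your proof is a valid proof of the cited result. It is.

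Your argument is the standard complex-analytic route: establish exponential integrability $\int e^{s|x|}\,d\mu < \infty$ from the moment-series hypothesis, extend the characteristic function holomorphically to the strip $|\operatorname{Im} z| < s$, read off its Taylor coefficients in terms of the $m_k$, and invoke the identity theorem plus Fourier uniqueness. The one genuinely delicate step, which you correctly flag, is passing from the signed moments to absolute moments; your Cauchy--Schwarz bound $M_{2k+1} \le \sqrt{m_{2k}\,m_{2k+2}}$ together with the boundedness of $m_{2k}r^{2k}/(2k)!$ (from the convergence of the series) does the job, since $\sqrt{(2k)!(2k+2)!}/(2k+1)! = \sqrt{(2k+2)/(2k+1)}$ is bounded. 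Billingsley's own proof stays on the real line: he shows the characteristic function is real-analytic with radius of convergence at least $r$ around every $t_0 \in \mathbb{R}$ using the elementary Taylor-remainder estimate for $e^{itx}$, then propagates equality of $\varphi_\mu$ and $\varphi_\nu$ along overlapping intervals by induction. Your version trades that bookkeeping for the identity theorem, which requires the holomorphic extension but makes the propagation step immediate. Both work; yours uses slightly heavier machinery for a slightly cleaner finish. No gaps.
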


Note that a Poisson random variable with parameter $\lambda$ has a moment generating function equal to $e^{\lambda(e^{r}-1)}$, see \cite[Equation 21.27]{BP}.
In particular, the series $\sum_{k\geq 0} m_{k}r^{k}/k!$ converges for all $r$ and thus the distributions of Poisson variables are determined by their moments.

Finally, the so-called "method of moments" in the multivariate case is the next theorem.
We use \cite[Ex. 30.6]{BP}, however we state the theorem as it appears in \cite[Theorem 6.1]{PZ}:
\begin{thm}[Multivariate Method of Moments]\label{moments}
    Let $X^{(1)},...,X^{(m)}$ and \newline $X_{n}^{(1)},...,X_{n}^{(m)}$ be random variables, and suppose that the joint distribution of $X^{(1)},...,X^{(m)}$ is determined by its moments. If the variables $X_{n}^{(1)},...,X_{n}^{(m)}$ have moments of all orders for all $n$ such that
    \[
        \underset{n\to\infty}{\lim}\mathbb{E}\left[\left(X_{n}^{(1)}\right)^{s_{1}}\cdot\cdot\cdot\left(X_{n}^{(m)}\right)^{s_{m}}\right] =
        \mathbb{E}\left[\left(X^{(1)}\right)^{s_{1}}\cdot\cdot\cdot\left(X^{(m)}\right)^{s_{m}}\right],
    \]
    for all $s_{1},...,s_{m}\in \mathbb{N}^{m}$, then
    \[
        \left(X_{n}^{(1)},...,X_{n}^{(m)}\right) \xrightarrow{dis} \left(X^{(1)},...,X^{(m)}\right).
    \]
    In particular, if $X^{(1)},...,X^{(m)}$ are independent then $X_{n}^{(1)},...,X_{n}^{(m)}$ are asymptotically independent.
\end{thm}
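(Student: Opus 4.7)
The plan is to prove the multivariate method of moments theorem by the standard two-step strategy: first establish tightness of the sequence $\{(X_n^{(1)},\ldots,X_n^{(m)})\}_n$ on $\mathbb{R}^m$, then show that any weak subsequential limit must coincide in distribution with $(X^{(1)},\ldots,X^{(m)})$ by comparing moments and invoking the moment-determinacy hypothesis. Once every subsequence has a further subsequence converging to the same limit, the whole sequence converges in distribution to that limit.

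First I would establish tightness. Taking $s_i=2$ and $s_j=0$ for $j\neq i$ in the moment-convergence hypothesis gives $\mathbb{E}\left[(X_n^{(i)})^2\right]\to\mathbb{E}\left[(X^{(i)})^2\right]<\infty$, so the second moments of each marginal are uniformly bounded in $n$. Markov's inequality then forces each coordinate sequence $\{X_n^{(i)}\}_n$ to be tight on $\mathbb{R}$, and since a product of relatively compact sets in $\mathbb{R}$ is relatively compact in $\mathbb{R}^m$, the joint sequence is tight on $\mathbb{R}^m$. By Prokhorov's theorem every subsequence of $(X_n^{(1)},\ldots,X_n^{(m)})$ admits a further weakly convergent sub-subsequence.

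Next I would identify every weak subsequential limit. Let $(Y^{(1)},\ldots,Y^{(m)})$ be a weak limit along some sub-subsequence $(n_{k_j})$. Fix positive integers $s_1,\ldots,s_m$. Applying the hypothesis with exponents $2s_1,\ldots,2s_m$ shows that $\mathbb{E}\left[(X_n^{(1)})^{2s_1}\cdots(X_n^{(m)})^{2s_m}\right]$ converges and is therefore uniformly bounded in $n$, which yields a uniform bound on the second moment of the random variable $(X_n^{(1)})^{s_1}\cdots(X_n^{(m)})^{s_m}$. Consequently this family is uniformly integrable, and combined with weak convergence along $(n_{k_j})$ this produces
\[
    \mathbb{E}\left[(Y^{(1)})^{s_1}\cdots(Y^{(m)})^{s_m}\right]
    =\lim_{j\to\infty}\mathbb{E}\left[(X_{n_{k_j}}^{(1)})^{s_1}\cdots(X_{n_{k_j}}^{(m)})^{s_m}\right]
    =\mathbb{E}\left[(X^{(1)})^{s_1}\cdots(X^{(m)})^{s_m}\right].
\]
Since this holds for all choices of $s_1,\ldots,s_m\in\mathbb{N}$, the joint distribution of $(Y^{(1)},\ldots,Y^{(m)})$ agrees with that of $(X^{(1)},\ldots,X^{(m)})$ on all mixed moments, and the moment-determinacy hypothesis forces these distributions to be equal. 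As every subsequence admits a further subsequence converging to the same limit law, we conclude $(X_n^{(1)},\ldots,X_n^{(m)})\xrightarrow{\text{dis}}(X^{(1)},\ldots,X^{(m)})$. The final independence assertion is immediate: if $X^{(1)},\ldots,X^{(m)}$ are independent then the limiting joint law factors, which is by definition asymptotic independence of the $X_n^{(i)}$.

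The main obstacle is the uniform integrability step, because weak convergence by itself does not imply convergence of moments; the hypothesis must do double duty, both supplying uniform higher-moment bounds (to upgrade weak convergence into moment convergence along subsequences) and providing the target values against which moment-determinacy is invoked. A secondary subtlety is that multivariate moment-determinacy is strictly stronger than marginal moment-determinacy, but here it is assumed outright and so causes no difficulty.
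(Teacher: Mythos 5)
The paper does not prove Theorem~\ref{moments}; it cites it from Billingsley (Ex.~30.6 in \cite{BP}) and Puder--Zimhony (Theorem~6.1 in \cite{PZ}). Your argument is the standard textbook proof of the multivariate method of moments (tightness from uniformly bounded second moments, Prokhorov's theorem, identification of every subsequential weak limit via uniform integrability from higher-moment bounds together with the moment-determinacy hypothesis, then the subsequence principle), and it is correct.
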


The useful proposition below lets one construct multivariate probability measures that are determined by their moments using univariate probability measures that are determined by their moments.
\begin{prop}[Theorem 3 in \cite{moments}]\label{s obs}
    Let $\mu_{1},...,\mu_{m}$ be probability  distributions on $\mathbb{R}$ that are determined by their moments. Then the product probability distribution
    \[
        \mu_{1}\times ...\times \mu_{m}
    \]
    on $\mathbb{R}^{m}$ is determined by its moments.
\end{prop}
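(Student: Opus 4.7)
The plan is to proceed by induction on $m$, using as the key analytic input the classical theorem of M.~Riesz that a one-dimensional probability measure with finite moments of all orders is determined by its moments if and only if polynomials are dense in the corresponding $L^{2}$-space.

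I would suppose that $\nu$ is a probability measure on $\mathbb{R}^{m}$ with the same cross-moments as $\mu := \mu_{1}\times\cdots\times\mu_{m}$, and aim to conclude $\nu=\mu$. The case $m=1$ is the hypothesis. For $m\geq 2$, set $\mu' := \mu_{2}\times\cdots\times\mu_{m}$ on $\mathbb{R}^{m-1}$, which is moment-determined by the inductive hypothesis. Setting $r_{1}=0$ in the cross-moment equality shows that the marginal of $\nu$ on the last $m-1$ coordinates has the same moments as $\mu'$, so this marginal equals $\mu'$; likewise, the first-coordinate marginal of $\nu$ equals $\mu_{1}$.

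The main technical step is to establish the density of polynomials in $m-1$ variables in $L^{2}(\mu')$. I would deduce this from the one-variable Riesz theorem together with the Hilbert-space identification $L^{2}(\mu_{2}\times\cdots\times\mu_{m})\cong L^{2}(\mu_{2})\,\widehat{\otimes}\,\cdots\,\widehat{\otimes}\,L^{2}(\mu_{m})$: the algebraic tensor product of dense subspaces is dense, and that tensor product is exactly the span of polynomials in $m-1$ variables. Then, given bounded continuous $f:\mathbb{R}\to\mathbb{R}$ and $G:\mathbb{R}^{m-1}\to\mathbb{R}$, I would choose polynomial approximants $P_{n}\to f$ in $L^{2}(\mu_{1})$ and $Q_{n}\to G$ in $L^{2}(\mu')$. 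Since $P_{n}(x_{1})Q_{n}(x_{2},\ldots,x_{m})$ is a polynomial on $\mathbb{R}^{m}$ and $\nu,\mu$ share cross-moments,
\[
\mathbb{E}_{\nu}\bigl[P_{n}(X_{1})Q_{n}(X_{2},\ldots,X_{m})\bigr] = \mathbb{E}_{\mu_{1}}[P_{n}]\cdot\mathbb{E}_{\mu'}[Q_{n}] \longrightarrow \mathbb{E}_{\mu_{1}}[f]\cdot\mathbb{E}_{\mu'}[G].
\]

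On the other hand, splitting $P_{n}Q_{n}-fG = P_{n}(Q_{n}-G)+(P_{n}-f)G$ and applying Cauchy--Schwarz in $L^{2}(\nu)$, I would use the crucial identities $\|P_{n}(X_{1})\|_{L^{2}(\nu)}=\|P_{n}\|_{L^{2}(\mu_{1})}$ and $\|Q_{n}(X_{2},\ldots,X_{m})-G(X_{2},\ldots,X_{m})\|_{L^{2}(\nu)}=\|Q_{n}-G\|_{L^{2}(\mu')}$, which hold precisely because the respective marginals of $\nu$ have already been identified with $\mu_{1}$ and $\mu'$. This yields $L^{1}(\nu)$-convergence of $P_{n}(X_{1})Q_{n}(X_{2},\ldots,X_{m})$ to $f(X_{1})G(X_{2},\ldots,X_{m})$, and hence $\mathbb{E}_{\nu}[f(X_{1})G(X_{2},\ldots,X_{m})]=\mathbb{E}_{\mu_{1}}[f]\cdot\mathbb{E}_{\mu'}[G]$ for all bounded continuous $f$ and $G$. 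A standard monotone-class argument would then promote this to $\nu(A\times B)=\mu_{1}(A)\cdot\mu'(B)$ for all Borel rectangles $A\times B\subseteq\mathbb{R}\times\mathbb{R}^{m-1}$, whence $\nu=\mu_{1}\times\mu'=\mu$. The main obstacle is the density of polynomials in $L^{2}(\mu')$; this is exactly why the one-dimensional Riesz theorem must be invoked and promoted to product measures via Hilbert tensor products, and why the induction is needed in the first place rather than a direct proof for arbitrary $m$.
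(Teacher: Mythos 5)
The paper does not prove this proposition---it cites it directly as Theorem 3 of \cite{moments} (Petersen), so there is no in-paper argument to compare against; your proof must stand on its own. It does: the structure (induction on $m$, identification of marginals via determinacy of $\mu_{1}$ and the inductive hypothesis, density of polynomials in $L^{2}$, Cauchy--Schwarz, monotone class) is sound and the estimates you list close the argument. One inaccuracy worth flagging: the statement you attribute to M.~Riesz is not a biconditional at the level of $L^{2}(\mu)$. Determinacy of a measure on $\mathbb{R}$ does imply that polynomials are dense in $L^{2}(\mu)$ (this is the direction you actually use, and it is correct), but the converse fails: N-extremal solutions of an indeterminate Hamburger problem have polynomials dense in $L^{2}$ without being determinate. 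The correct biconditional replaces $L^{2}(\mu)$ by $L^{2}\bigl((1+x^{2})\,d\mu\bigr)$. Since only the forward implication enters your argument, this does not create a gap. It is also worth emphasizing that your Hilbert tensor-product step is genuinely necessary and well placed: for $m-1\geq 2$, determinacy of $\mu'$ does \emph{not} by itself imply that polynomials are dense in $L^{2}(\mu')$ (Berg--Thill give counterexamples), so one cannot shortcut by applying the Riesz criterion to $\mu'$ directly. Deriving density coordinatewise from the one-dimensional Riesz theorem and then tensoring is exactly the right workaround, and it is what makes the induction close.
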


Applying Proposition \ref{s obs} to our setting, we conclude:
\begin{prop}\label{pos det mom}
    Let $\lambda_{1},...,\lambda_{m}$ be positive numbers, and let $Z_{\lambda_{1}},...,Z_{\lambda_{m}}$ be independent Possion random variables. The joint distribution of:
    \[
        \left(Z_{\lambda_{1}},...,Z_{\lambda_{m}}\right),
    \]
    on $\mathbb{R}^{m}$ is determined by its moments.
\end{prop}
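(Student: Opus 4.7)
The plan is to deduce this directly by combining the two moment-determinacy results quoted just above, namely Theorem \ref{crit} for the one-dimensional case and Proposition \ref{s obs} for passing to product measures. The only things to check are (a) that each one-dimensional marginal $Z_{\lambda_i}$ is determined by its moments, and (b) that the joint distribution of $(Z_{\lambda_1},\dots,Z_{\lambda_m})$ is genuinely a product measure, so that Proposition \ref{s obs} applies.

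For (a), I would invoke Theorem \ref{crit}: the moment-generating function of a Poisson variable $Z_{\lambda_i}$ is $\mathbb{E}[e^{rZ_{\lambda_i}}]=e^{\lambda_i(e^r-1)}$, as already recorded in the excerpt just after Theorem \ref{crit}. This is finite for every real $r$, hence exists in a neighborhood of $0$, so the distribution of each $Z_{\lambda_i}$ on $\mathbb{R}$ is determined by its moments. For (b), the hypothesis that the $Z_{\lambda_1},\dots,Z_{\lambda_m}$ are independent means, by definition, that the joint law on $\mathbb{R}^{m}$ equals the product $\mu_1\times\cdots\times\mu_m$ of the individual laws $\mu_i$ of $Z_{\lambda_i}$.

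Combining these two observations, Proposition \ref{s obs} applied to $\mu_1,\dots,\mu_m$ immediately yields that $\mu_1\times\cdots\times\mu_m$, i.e.\ the joint distribution of $(Z_{\lambda_1},\dots,Z_{\lambda_m})$, is determined by its moments. There is no real obstacle here; the statement is essentially a bookkeeping corollary whose only content is invoking the two cited theorems in the right order. The only minor subtlety worth stating explicitly is that the Poisson MGF is entire, so Theorem \ref{crit} applies with room to spare, but this is the observation the author has already flagged in the paragraph between Theorem \ref{crit} and Theorem \ref{moments}.
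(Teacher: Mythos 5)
Your proof is correct and follows exactly the path the paper intends: the paper records (right after Theorem \ref{crit}) that the Poisson MGF $e^{\lambda(e^r-1)}$ is finite near $0$, so each marginal is moment-determined, and then states Proposition \ref{pos det mom} as an immediate consequence of Proposition \ref{s obs} applied to the product measure coming from independence. There is no divergence from the paper's argument.
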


\subsection{Proof of Theorem \ref{cyc ext} and of Theorem \ref{dist thm}}
Before the proof, let us recall some simple machinery from number theory.
\begin{defn}[Definition 16.3 in \cite{HW}]
    The M\"{o}bius function:
    \[
        \mu:\mathbb{Z}_{\geq 1}\to \{0,\pm1\},
    \]
    is defined as follows:
    \begin{enumerate}
        \item
              $\mu(1) = 1$
        \item
              $\mu(n) = 0$ if $n$ has a square factor.
        \item
              $\mu(n) = (-1)^k$ if $n$ is square-free and is divisible by exactly $k$ different primes.
    \end{enumerate}
\end{defn}

\begin{thm}[The M\"{o}bius Inversion Formula - Theorem 16.4 in \cite{HW}]\label{mob inv}
    Let \newline $f,g:\mathbb{Z}_{\geq 1} \to \mathbb{R}$. If:
    \[
        g(n) = \sum_{d|n}f(d),
    \]
    for all $n$, then:
    \[
        f(n) = \sum_{d|n}\mu(n/d)g(d) = \sum_{d|n}\mu(d)g(n/d),
    \]
    for all $n$.
\end{thm}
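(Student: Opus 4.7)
The plan is to reduce the Möbius inversion formula to the single identity
\[
    \sum_{d|n}\mu(d) = \begin{cases} 1 & \text{if } n=1,\\ 0 & \text{if } n > 1,\end{cases}
\]
which is the fundamental combinatorial property of the Möbius function. I would first establish this auxiliary identity and then derive both equalities in the theorem from it by an interchange of summation.

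\textbf{Step 1: The key identity.} For $n=1$ the sum is just $\mu(1)=1$. For $n>1$, write $n=p_{1}^{a_{1}}\cdots p_{k}^{a_{k}}$ in its prime factorization. Only square-free divisors of $n$ contribute, and each such divisor corresponds to a subset $S\subseteq\{p_{1},\dots,p_{k}\}$ via $d=\prod_{p\in S}p$; on such a divisor $\mu(d)=(-1)^{|S|}$. Hence
\[
    \sum_{d|n}\mu(d) = \sum_{S\subseteq\{p_{1},\dots,p_{k}\}}(-1)^{|S|} = (1-1)^{k} = 0.
\]

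\textbf{Step 2: Deriving the inversion formula.} Assuming $g(n)=\sum_{d|n}f(d)$, the core computation is
\[
    \sum_{d|n}\mu(d)g(n/d) = \sum_{d|n}\mu(d)\sum_{e|(n/d)}f(e) = \sum_{e|n}f(e)\sum_{d|(n/e)}\mu(d),
\]
where the swap of summation uses the bijection between pairs $(d,e)$ with $d|n$ and $e|(n/d)$ and pairs $(d,e)$ with $de|n$. By Step 1 the inner sum vanishes unless $n/e=1$, in which case it equals $1$. Thus the whole expression collapses to $f(n)$, yielding the first equality in the theorem. The second equality is immediate from the involution $d\mapsto n/d$ on the divisors of $n$, which sends $\sum_{d|n}\mu(d)g(n/d)$ to $\sum_{d|n}\mu(n/d)g(d)$.

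The argument is essentially routine; the only nontrivial ingredient is the combinatorial identity in Step 1, and even that reduces at once to the binomial theorem applied to $(1-1)^{k}$. I expect no real obstacle here beyond carefully justifying the exchange of summation, which is a standard manipulation of sums over divisors.
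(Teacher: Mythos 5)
Your proof is correct and is the standard textbook argument; note, however, that the paper does not prove this statement at all, but simply cites it as Theorem 16.4 of Hardy and Wright. Your reduction to the identity $\sum_{d|n}\mu(d)=[n=1]$ via the binomial theorem, followed by the interchange of divisor sums, is exactly the proof given in that reference, so there is nothing to compare against within the paper itself.
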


\begin{proof}[Proof of Theorem \ref{cyc ext}]
    Let $\gamma_{1},...,\gamma_{t}\in \mathcal{P}_{0}$ be distinct, and for each $i$ let $r_{i}\geq 1$ be an integer.
    To show that:
    \begin{multline*}
        \left(C_{n,1}(\gamma_{1}),...,C_{n,r_{1}}(\gamma_{1}),...,C_{n,1}(\gamma_{t}),...,C_{n,r_{t}}(\gamma_{t})\right)\\
        \stackrel{\text{dis}}{\rightarrow}
        \left(Z^{(1)}_{1},...,Z^{(1)}_{1/r_{1}},...,Z^{(t)}_{1},...,Z^{(t)}_{1/r_{t}}\right),
    \end{multline*}
    holds, we use Theorem \ref{moments} alongside Proposition \ref{pos det mom}. It thus suffices to show that for every choice of positive integers $s_{i,j}$ for $1\leq i\leq t$ and $1\leq j\leq r_{i}$ we have:
    \[
        \underset{n\to\infty}{\lim}\mathbb{E}_{n}\left[\prod_{i=1}^{t}\prod_{j=1}^{r_{i}}C_{n,j}(\gamma_{i})^{s_{i,j}}\right] =
        \mathbb{E}\left[\prod_{i=1}^{t}\prod_{j=1}^{r_{i}}\left(Z^{(i)}_{1/j}\right)^{s_{i,j}}\right].
    \]

    Using the M\"{o}bius inversion formula, Theorem \ref{mob inv}, alongside the identity $F_{n}(\gamma^{q}) = \sum_{d|q}dC_{n,d}(\gamma)$ yields:
    \[
        C_{n,r}(\gamma_{i}) = \frac{1}{r}\sum_{d|r}\mu(d)F_{n}(\gamma_{i}^{r/d}).
    \]
    Therefore it suffices to show that for every choice of positive integers $s_{i,j}$ for $1\leq i\leq t$ and $1\leq j\leq r_{i}$ we have:
    \[
        \underset{n\to\infty}{\lim}\mathbb{E}_{n}\left[\prod_{i=1}^{t}\prod_{j=1}^{r_{i}}\left(\frac{1}{j}\sum_{d|j}\mu(d)F_{n}(\gamma_{i}^{j/d})\right)^{s_{i,j}}\right] =
        \mathbb{E}\left[\prod_{i=1}^{t}\prod_{j=1}^{r_{i}}\left(Z^{(i)}_{1/j}\right)^{s_{i,j}}\right].
    \]
    However, as the LHS is a polynomial in the variables:
    \[
        \{F_{n}(\gamma_{i}^{a})\}_{\substack{1\leq i\leq t\\1\leq a\leq \max_{i}r_{i}}},
    \]
    Corollary \ref{moments cor} shows that in the limit as $n\to\infty$ we may swap each $F_{n}(\gamma_{i}^{a})$ by $X^{(i)}_{a} = \sum_{d|a}dZ^{(i)}_{1/d}$. Formally:
    \begin{multline*}
        \underset{n\to\infty}{\lim}\mathbb{E}_{n}\left[\prod_{i=1}^{t}\prod_{j=1}^{r_{i}}\left(\frac{1}{j}\sum_{d|j}\mu(d)F_{n}(\gamma_{i}^{j/d})\right)^{s_{i,j}}\right] =\\
        \mathbb{E}\left[\prod_{i=1}^{t}\prod_{j=1}^{r_{i}}\left(\frac{1}{j}\sum_{d|j}\mu(d)X^{(i)}_{j/d}\right)^{s_{i,j}}\right].
    \end{multline*}
    As $X^{(i)}_{a} = \sum_{d|a}dZ^{(i)}_{1/d}$, the M\"{o}bius inversion formula, Theorem \ref{mob inv}, shows that:
    \[
        Z^{(i)}_{1/j} = \frac{1}{j}\sum_{d|j}\mu(d)X^{(i)}_{j/d}.
    \]
    In total:
    \[
        \begin{split}
            \underset{n\to\infty}{\lim}\mathbb{E}_{n}\left[\prod_{i=1}^{t}\prod_{j=1}^{r_{i}}C_{n,j}(\gamma_{i})^{s_{i,j}}\right] & =
            \underset{n\to\infty}{\lim}\mathbb{E}_{n}\left[\prod_{i=1}^{t}\prod_{j=1}^{r_{i}}\left(\frac{1}{j}\sum_{d|j}\mu(d)F_{n}(\gamma_{i}^{j/d})\right)^{s_{i,j}}\right]                                                                                  \\
                                                                                                                                  & = \mathbb{E}\left[\prod_{i=1}^{t}\prod_{j=1}^{r_{i}}\left(\frac{1}{j}\sum_{d|j}\mu(d)X^{(i)}_{j/d}\right)^{s_{i,j}}\right] \\
                                                                                                                                  & =\mathbb{E}\left[\prod_{i=1}^{t}\prod_{j=1}^{r_{i}}\left(Z^{(i)}_{1/j}\right)^{s_{i,j}}\right].
        \end{split}
    \]
\end{proof}

\begin{proof}[Proof of Theorem \ref{dist thm}]
    Given Theorem \ref{cyc ext} the proof is very short. Let \newline $\gamma_{1},...,\gamma_{t}\in \mathcal{P}_{0}$ be distinct and let $r_{i}$ and $a_{i,1},...,a_{i,r_{i}}$ be positive integers. Define $g:\mathbb{R}^{\sum_{i}r_{i}}\to \mathbb{R}^{t}$ as:
    \[
        g(x_{1,1},...,x_{1,r_{1}},...,x_{t,1},...,x_{t,r_{t}}) = \left(\prod_{j=1}^{r_{1}}\sum_{k|a_{1,j}}kx_{1,k},...,\prod_{j=1}^{r_{t}}\sum_{k|a_{t,j}}kx_{t,k}\right),
    \]
    and note that it is continuous.

    As $F_{n}(\gamma^{a}) = \sum_{d|a}dC_{n,d}(\gamma)$ for every $\gamma \in \Gamma$ and positive integer $a$, we have:
    \begin{multline*}
        g\left(C_{n,1}(\gamma_{1}),...,C_{n,r_{1}}(\gamma_{1}),...,C_{n,1}(\gamma_{t}),...,C_{n,r_{t}}(\gamma_{t})\right) =\\
        \left(\prod_{j=1}^{r_{1}}F_{n}(\gamma_{1}^{a_{1,j}}),...,\prod_{j=1}^{r_{t}}F_{n}(\gamma_{t}^{a_{t,j}})\right),
    \end{multline*}
    and:
    \begin{multline*}
        g\left(Z^{(1)}_{1},...,Z^{(1)}_{1/r_{1}},...,Z^{(t)}_{1},...,Z^{(t)}_{1/r_{t}}\right) = \\
        \left(\prod_{j=1}^{r_{1}}\sum_{k|a_{1,j}}kZ_{1/k}^{(1)},...,\prod_{j=1}^{r_{t}}\sum_{k|a_{t,j}}kZ_{1/k}^{(t)}\right) =
        \left(X^{(1)}_{a_{1,1},...,a_{1,r_{1}}},...,X^{(t)}_{a_{t,1},...,a_{t,r_{t}}}\right).
    \end{multline*}
    Theorem \ref{cyc ext} alongside the fact that $g$ is continuous gives:
    \[
        \left(\prod_{j=1}^{r_{1}}F_{n}(\gamma_{1}^{a_{1,j}}),...,\prod_{j=1}^{r_{t}}F_{n}(\gamma_{t}^{a_{t,j}})\right)
        \stackrel{\text{dis}}{\rightarrow}
        \left(X^{(1)}_{a_{1,1},...,a_{1,r_{1}}},...,X^{(t)}_{a_{t,1},...,a_{t,r_{t}}}\right).
    \]
\end{proof}

\section{Proving Lemma \ref{imp lemma}}\label{lemprf}
For the rest of this paper, we focus on Lemma \ref{imp lemma}.
As this is the most technical section of the paper, the reader is highly recommended to read the outline of the proof below.
Before continuing, let us restate the lemma here:
\begin{lemma*}
    Let $E=E_{\gamma}\sqcup E'$ be a compact subcover
    of $X$ such that $\chi^{\text{grp}}(C) = 0$ for all connected components $C$ of $E$.
    If all conjugacy classes in $\pi_{1}^{\text{lab}}(E_{\gamma})$ (or $\pi_{1}^{\text{lab}}(E_{\gamma})$ itself in case $E_{\gamma}$ is connected)
    contain a non-zero power of $\gamma$ (up to conjugation) and all conjugacy classes of $\pi_{1}^{\text{lab}}(E')$
    (or $\pi_{1}^{\text{lab}}(E')$ itself in case $E'$ is connected)
    do not contain any non-zero power of $\gamma$ (up to conjugation),
    then we have \newline $a_{0}(E)=a_{0}(E_{\gamma})a_{0}(E')$.
\end{lemma*}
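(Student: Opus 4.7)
The plan is to combine the natural resolution (Lemma \ref{res and exp}) with the asymptotic expansion (Theorem \ref{asym exp}), exploiting the hypothesis to prevent ``mixing'' between $E_\gamma$ and $E'$ in the dominant terms. The first step will be a structural splitting: any surjective subcover morphism $f:E\twoheadrightarrow Z$ with $\chi^{\text{grp}}(Z)=0$ must decompose as $f=f_\gamma\sqcup f'$ for some $f_\gamma:E_\gamma\twoheadrightarrow Z_\gamma$ and $f':E'\twoheadrightarrow Z'$ with $Z=Z_\gamma\sqcup Z'$. Indeed, since $\chi^{\text{grp}}(Z)=0$ and each component contributes non-positively, every component of $Z$ has $\pi_1^{\text{lab}}\cong\mathbb{Z}$ by Proposition \ref{subgroups}. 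If a single component $C$ of $Z$ received maps from both sides, its cyclic $\pi_1^{\text{lab}}=[\langle\delta\rangle]$ would have to contain (up to conjugation) both a non-zero power of $\gamma$ (via Lemma \ref{subgrp lem}) and a non-trivial element of an $E'$-component's cyclic subgroup. A short computation using the torsion-freeness of $\Gamma$ will then produce a non-zero power of $\gamma$ conjugate to a non-trivial element of the $E'$-cyclic subgroup, contradicting the hypothesis. Moreover $Z_\gamma$ and $Z'$ themselves will inherit the lemma's hypotheses.

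Next I would use this splitting with Lemma \ref{res and exp} on the natural resolution $R$ of $E$. Writing $R^0\subseteq R$ for the morphisms with $\chi^{\text{grp}}(Z_f)=0$, and similarly $R_\gamma^0, R_{E'}^0$ for the analogous sets associated to $E_\gamma$ and $E'$, the splitting will give a bijection $R^0\cong R_\gamma^0\times R_{E'}^0$, and Theorem \ref{asym exp} will yield
\[
\mathbb{E}_E(n) = \sum_{(f_\gamma,f')\in R_\gamma^0\times R_{E'}^0} a_0(Z_{f_\gamma}\sqcup Z_{f'}) + O(1/n),
\]
with analogous expansions for $\mathbb{E}_{E_\gamma}(n)$ and $\mathbb{E}_{E'}(n)$. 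I would then proceed by induction on the total number of cells of $E$ to establish the multiplicativity $a_0(Z_{f_\gamma}\sqcup Z_{f'})=a_0(Z_{f_\gamma})a_0(Z_{f'})$: the base cases ($E_\gamma$ or $E'$ empty) are trivial, and for each pair $(f_\gamma,f')\neq(\mathrm{id},\mathrm{id})$ the target $Z_{f_\gamma}\sqcup Z_{f'}$ is strictly smaller, still satisfies the lemma's hypothesis by the first step, so the inductive hypothesis applies.

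The hard part will be closing this induction. After substituting into the expansion above and comparing to $\mathbb{E}_{E_\gamma}(n)\mathbb{E}_{E'}(n)$, the lemma reduces to the asymptotic identity $\mathbb{E}_E(n)=\mathbb{E}_{E_\gamma}(n)\mathbb{E}_{E'}(n)+O(1/n)$, i.e.\ to the claim that the covariance of the numbers of lifts of $E_\gamma$ and $E'$ is $O(1/n)$. This covariance bound must be established directly, without circular use of the lemma. I expect this to follow from the refined subcover machinery of Section \ref{lemprf} building on \cite{MP2,PZ}: any overlapping pair of lifts of $E_\gamma$ and $E'$ must yield a combined image in $X_\phi$ containing at least one component with non-cyclic $\pi_1^{\text{lab}}$ (by the same dichotomy as in the first step), forcing $\chi^{\text{grp}}\leq -1$ and hence only an $O(1/n)$ contribution via Theorem \ref{asym exp}.
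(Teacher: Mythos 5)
Your first step (structural splitting) is correct and is essentially the same dichotomy the paper uses, both in the proof of Theorem~\ref{main thm} and in Part~2 of the paper's proof of this lemma: a component of a surjective image with $\chi^{\text{grp}}=0$ has cyclic $\pi_1^{\text{lab}}$ by Proposition~\ref{subgroups}, and by Lemma~\ref{subgrp lem} together with the commensurability argument in torsion-free $\Gamma$, such a component cannot receive maps from both $E_\gamma$ and $E'$. The resulting bijection $R^0\cong R_\gamma^0\times R_{E'}^0$ for natural resolutions is also fine. The problem is that your induction does not close.

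Trace the logic of your steps~2--3. Using the natural resolution and the bijection, the lemma is equivalent (given the inductive hypothesis for all strictly smaller pairs) to the multiplicativity $a_0(Z_{f_\gamma}\sqcup Z_{f'})=a_0(Z_{f_\gamma})a_0(Z_{f'})$ applied to the pair $(\mathrm{id},\mathrm{id})$, which is $a_0(E)=a_0(E_\gamma)a_0(E')$ --- the lemma itself. You correctly observe this is the covariance bound $\mathbb{E}_E(n)=\mathbb{E}_{E_\gamma}(n)\mathbb{E}_{E'}(n)+O(1/n)$. But the covariance bound, unwound through the same expansion and the same inductive hypothesis, is again exactly $a_0(E)=a_0(E_\gamma)a_0(E')$, so the reduction is circular; no work has been saved.

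Your proposed ``overlap'' argument does not fill that gap. It correctly shows that the expected number of overlapping pairs of lifts is $O(1/n)$: such a pair factors through a $Z_f$ with a mixed component $D$, which by the step-1 dichotomy cannot have cyclic $\pi_1^{\text{lab}}$, hence $\chi^{\text{grp}}(D)\leq -1$ and $\chi^{\text{grp}}(Z_f)\leq -1$, so Theorem~\ref{asym exp} gives $\mathbb{E}^{\text{emb}}_{Z_f}(n)=O(1/n)$. But this only gives $\mathbb{E}_E(n)=\mathbb{E}[\#\text{non-overlapping pairs}]+O(1/n)$. You still need $\mathbb{E}[\#\text{non-overlapping pairs}]=\mathbb{E}_{E_\gamma}(n)\mathbb{E}_{E'}(n)+O(1/n)$, and this is not elementary: expanding both sides over pairs of surjective morphisms brings you straight back to the very multiplicativity $a_0(Z_{f_\gamma}\sqcup Z_{f'})=a_0(Z_{f_\gamma})a_0(Z_{f'})$ that you are trying to prove. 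In short, ``$a_0$ of a disjoint union is a product of $a_0$'s'' is the whole content of the lemma, and your approach needs it as an input.

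The paper avoids this circularity by not using the natural resolution at all. It uses the special embedding-resolution $R^{\text{emb}}_E$ from Corollary~\ref{sp emb res} (outputs of the deterministic growing process of \cite{MP1}), which has the crucial property that every target with $\chi^{\text{grp}}=0$ is SBR and therefore has $a_0=1$ by Theorem~\ref{a zero one}. This collapses $a_0(E)$ to a pure count, $a_0(E)=\#\{h\in R^{\text{emb}}_E:\chi^{\text{grp}}(Z_h)=0\}$ (Proposition~\ref{imp prop}), and the lemma reduces to a combinatorial bijection between this set and the product of the analogous sets for $E_\gamma$ and $E'$. That bijection is then established by carefully tracking the growing process and showing that it factors over the disjoint union (using the key observation Proposition~\ref{key obs} to control $\chi^{\text{grp}}$ along the way). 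No multiplicativity of $a_0$ is ever invoked, because every $a_0$ in sight equals $1$. If you want to salvage your route, you would need to replace the natural resolution with this SBR-producing one; the natural resolution by itself leaves an irreducible residue equal to the statement you want to prove.
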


\subsection{Outline of the proof}
To prove Lemma ~\ref{imp lemma} we need a more direct understanding of the number $a_{0}(Y)$ for subcovers $Y$ of $X$.
As it turns out, computing $a_{0}(Y)$ boils down to counting the number of subcovers with a special property in some special resolution of $Y$.
A subcover having this special property is called "strongly boundary reduced" or SBR for short.
The reason we care about these 'special' subcovers is that for SBR $Z$ we have $a_{0}(Z)=1$.
In the proof to come we do not need an exact definition of when a subcover is SBR,
just the fact that a subcover is SBR if and only if all of its components are. The proof will consist of the following three steps:
\begin{enumerate}
    \item
          Fix a special embedding-resolution $R^{\text{emb}}_{E}$ of $E=E_{\gamma} \sqcup E'$ with the property that if
          $h:E\hookrightarrow Z_{h}$ is in $R^{\text{emb}}_{E}$ such that $\chi^{\text{grp}}(Z_{h}) = 0$ then $Z_{h}$ is SBR.
    \item
          Use Lemma \ref{res and exp} to show that
          \[
              a_{0}(E) = \#\{h\in R^{\text{emb}}_{E} : \chi^{\text{grp}}(Z_{h})=0\}.
          \]
    \item
          Set  $R^{\text{emb}}_{E_{\gamma}}$ and $R^{\text{emb}}_{E'}$ to be the special embedding-resolutions of $E_{\gamma}$ and $E'$, respecively.
          Finally, we show that elements in
          \[
              \{h\in R^{\text{emb}}_{E} : \chi^{\text{grp}}(Z_{h})=0\},
          \]
          are disjoint unions of morphisms $f\in R^{\text{emb}}_{E_{\gamma}}$
          and of a $g\in R^{\text{emb}}_{E'}$, such that:
          \[
              f:E_{\gamma}\hookrightarrow W_{\gamma} \;\;\;\;\; g:E'\hookrightarrow W'
          \]
          and $\chi^{\text{grp}}(W_{\gamma}) = \chi^{\text{grp}}(W')=0$,
          and that each such disjoint union appears in
          \[
              \{h\in R^{\text{emb}}_{E}:\chi^{\text{grp}}(Z_{h})=0\}.
          \]
          This concludes the proof.
\end{enumerate}

\subsection{Additional Background}
\subsubsection{background for step 1 of the proof}
We now give a brief recap of the terms to be used in the proof, a great reference that elaborates further is \cite{MP2}.
Recall that $X$ has a CW structure consisting of one vertex $o$, $2g$
1-cells (edges) and a single $2$-cell, and that every cover of $X$
inherits the CW structure from $X$. As covers of $X$ are surfaces
themselves, for a subcover $p:Y\to X$ where $Y\subseteq Z$ and
$Z$ is a cover of $X$, we can take a small
closed regular neighborhood of $Y$ in $Z$ and obtain the \textbf{thick
    version} of $Y$ denoted $\mathbb{Y}$. Note that the thick version
of a subcover is also a surface, possibly with boundary, and does
not depend on the cover $Z$ in which $Y$ is embedded, rather it
is a feature of $Y$ \cite[Section 3]{MP2}.

Denote by $\partial Y$ the boundary of
$\mathbb{Y}$ which, for compact $Y$, is just a finite collection of cycles. On each of the cycles
we can choose an orientation to obtain a \textbf{boundary cycle} of
$Y$. This boundary cycle corresponds to a (cyclic) word in the generators
of $\Gamma$ by the edge labels along the boundary cycle. Note that
every cover $Z$ of $X$ consists of vertices, directed edges
labeled by $a_{1},b_{1},...,a_{g},b_{g}$ and $4g$-gons, where the
cycle around every $4g$-gon reads the relation $[a_{1},b_{1}]...[a_{g},b_{g}]$.
We choose orientations on boundary cycles of $Y$ such that if $Y$
is embedded in $Z$, the boundary cycles read successive segments
of neighboring $4g$-gons (in $Z\backslash Y$) with the orientation
from the relation $[a_{1},b_{1}]...[a_{g},b_{g}]$ and not the inverse
word.

It might be the case that a subcover $p:Y\to X$ has a boundary
cycle that contains a sub-word of $[a_{1},b_{1}]...[a_{g},b_{g}]$
of length $\geq2g+1$, in that case, one can shorten the total boundary
length of $Y$ by annexing the $4g$-gon neighboring this sub-word,
hence $\partial Y$ is ``not reduced''. We call such a sub-word
of $[a_{1},b_{1}]...[a_{g},b_{g}]$ a \textbf{long block}. There are
more intricate cases where the boundary of $Y$ is ``not reduced''. Specifically, when
it contains certain words on $\{a_{1}^{\pm1},b_{1}^{\pm1},...,a_{g}^{\pm1},b_{g}^{\pm1}\}$
which we call \textbf{long chains}. The precise definition of long
chains can be found in \cite[Section 3]{MP2}.

\begin{defn} [Boundary reduced subcovers] A subcover $Y$ of $X$
    is called \textbf{boundary reduced }(BR) if $\partial Y$ contains
    neither long blocks nor long chains.
    \par
\end{defn}

In particular, from the alternative definition of a sub-cover $Y$, Definition \ref{alt def},
if $Y$ is BR then every path in $Y$ reading the relation $[a_{1},b_{1}]...[a_{g},b_{g}]$
must be the boundary of a $4g$-gon in $Y$.
\par
We do not need the explicit definition of a long chain, just the notion
of BR subcovers and the fact that a subcover $Y$ being BR only depends
on $\partial Y$.
\par
Given a subcover $Y$ of $X$ it is possible for $Y$ to have boundary
cycles which contain half of the relation $[a_{1},b_{1}]...[a_{g},b_{g}]$,
i.e. a cyclic sub-word of \newline $[a_{1},b_{1}]...[a_{g},b_{g}]$ of length $2g$.
In that case, one can annex the $4g$-gon neighboring this word (in
$Z\backslash Y$ where $Y\subseteq Z$ with $Z$ a full cover of $X$)
without increasing the total length of $\partial Y$, even possibly shortening it. Such a sub-word
is called a \textbf{half block}. Similarly, one has a notion of\textbf{
    half chains} whose precise definition can be found in \cite[Section 3]{MP2}.
\par

\begin{defn} [Strongly boundary reduced subcovers] A subcover $Y$
    of $X$ is called \textbf{strongly boundary reduced }(SBR) if $\partial Y$
    contains neither half blocks nor half chains.
    \par
\end{defn}

Note that as the naming suggests, an SBR subcover is also BR. See \newline \cite[Section 4]{MP2}. Again, we do not need the precise definition of half chains,
we only need the fact that a compact subcover being SBR only depends on its
boundary cycles. In addition, we need the following fact which is \newline \cite[Theorem 3.11, part 2]{PZ} and can also be found in \cite{MP1}:

\begin{thm} \label{a zero one}
    Let $Y$ be a compact SBR subcover of $X$, then:
    \[
        a_{0}(Y)=1.
    \]
\end{thm}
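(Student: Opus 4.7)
The plan is to establish the leading-order asymptotic $\mathbb{E}_{Y}^{\text{emb}}(n) = n^{\chi^{\text{grp}}(Y)}(1 + O(1/n))$ by a direct enumeration of injective lifts, which by Theorem~\ref{asym exp} immediately yields $a_{0}(Y) = 1$.

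First I would rewrite
\[
\#\mathrm{Hom}(\Gamma, S_{n}) \cdot \mathbb{E}_{Y}^{\text{emb}}(n) = \#\bigl\{(\phi, \iota) : \phi \in \mathrm{Hom}(\Gamma, S_{n}),\ \iota\colon Y \hookrightarrow X_{\phi}\text{ an embedding}\bigr\},
\]
and parametrize the right-hand side in two stages. Stage one: choose an injection $\iota_{0}\colon V(Y) \hookrightarrow [n]$ of the vertex set of $Y$; there are $(n)_{|V(Y)|} = n^{|V(Y)|}(1 + O(1/n))$ such choices. Stage two: count the number of $\phi \in \mathrm{Hom}(\Gamma, S_{n})$ whose partial permutation data agrees with the constraints imposed by $\iota_{0}$ along the labeled, directed edges of $Y$ (each edge pins down one value of one of the generators $\phi(a_{i})$ or $\phi(b_{i})$) and for which the induced map $\iota$ is a genuine embedding (no accidental vertex collisions, and the $4g$-gons of $Y$ lift correctly).

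The heart of the proof is the extension count in stage two. Using the formula $\#\mathrm{Hom}(\Gamma, S_{n}) = (n!)^{2g-1}\zeta^{S_{n}}(2g-2)$ from the introduction together with $\zeta^{S_{n}}(2g-2) = 1 + O(1/n^{2g-2})$ for $g \geq 2$ (the trivial representation dominates), and following character-theoretic techniques in the style of Magee--Puder, one can show that the extension count equals $(n!)^{2g-1} \cdot n^{-|E(Y)|+|F(Y)|}(1 + O(1/n))$. The factor $n^{|F(Y)|}$ arises because each 2-cell of $Y$ encodes an instance of the relator $[a_{1},b_{1}]\cdots[a_{g},b_{g}]$ that is already forced by the edge data, returning one degree of freedom to $\phi$. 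The SBR hypothesis is essential at this point: the absence of long and half blocks, and of long and half chains, along $\partial Y$ rules out precisely the partial-data configurations that would either force extra vertex identifications (depressing the leading-order count) or satisfy spurious algebraic relations (inflating the leading coefficient above $1$).

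Combining both stages yields
\[
\mathbb{E}_{Y}^{\text{emb}}(n) = n^{|V(Y)| - |E(Y)| + |F(Y)|}(1 + O(1/n)) = n^{\chi(Y)}(1 + O(1/n)),
\]
and since subcovers immerse into full covers (so $p_{\ast}\colon \pi_{1}(Y) \to \Gamma$ is injective on each component) and $Y$ is aspherical as a subcomplex of a surface cover, $\chi(Y) = \chi^{\text{grp}}(Y)$. Comparison with Theorem~\ref{asym exp} then forces $a_{0}(Y) = 1$. The main obstacle is carrying out the extension count with enough precision to pin down the leading constant exactly; the SBR property is designed so that each of the four forbidden boundary defects corresponds to a concrete mechanism that would otherwise perturb the leading-order constant away from $1$, and excluding all four simultaneously is what yields the clean value $a_{0}(Y) = 1$.
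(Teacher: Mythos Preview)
The paper does not prove this theorem; it is quoted from \cite[Theorem 3.11, part 2]{PZ} and \cite{MP1}. Your outline gestures at the strategy in those sources, but it has two substantive gaps.

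First, the ``heart of the proof'' --- the claimed formula $(n!)^{2g-1}\, n^{-|E(Y)| + |F(Y)|}(1 + O(1/n))$ for the extension count --- is asserted, not argued. This is precisely the deep content of the result: the representation-theoretic analysis in \cite{MP2,MP1} that controls how many $\phi\in\mathrm{Hom}(\Gamma,S_n)$ extend a prescribed partial assignment, and identifies the leading constant, is lengthy and delicate. The statement that SBR ``rules out precisely the partial-data configurations that would either force extra vertex identifications\ldots or satisfy spurious algebraic relations'' is a description of what must be proved, not a proof of it. Without this step you have only reproduced the statement of the theorem in different language.

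Second, your justification of $\chi(Y) = \chi^{\text{grp}}(Y)$ is incorrect. Subcovers do not in general induce injections on $\pi_{1}$: an immersion $p\colon Y\to X$ need not be $\pi_{1}$-injective. Concretely, take $Y$ to be the $1$-skeleton of $X$ itself, a wedge of $2g$ circles. Then $\pi_{1}(Y)\cong F_{2g}$ with $\chi(Y)=1-2g$, while $p_{\ast}(\pi_{1}(Y))=\Gamma$ has Euler characteristic $2-2g$; so $\chi(Y)\neq\chi^{\text{grp}}(Y)$ here. The equality $\chi=\chi^{\text{grp}}$ \emph{does} hold for boundary-reduced (hence SBR) subcovers, as the paper notes by citing \cite{PZ}, but this is a separate fact requiring its own argument and is not a consequence of $p$ being an immersion or of $Y$ being aspherical.
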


The first step of our proof requires the existence of special resolutions for compact subcovers. Their existence comes from the following theorem
and a corollary of it.

\begin{thm}\cite[Theorem 3.12]{PZ} \label{sp res}
    Let $Y$ be a compact subcover
    of $X$ and let $\chi_{0}\in\mathbb{Z}$. Then $Y$ admits a finite
    resolution $R_{Y}$ such that for every $f:Y\to Z_{f}$ in $R_{Y}$ we
    have
    \begin{itemize}
        \item
              the subcover $Z_{f}$ is compact and BR,
        \item
              if $\chi^{\text{grp}}(Z_{f})\geq\chi_{0}$ then $Z_{f}$ is SBR, and
        \item
              the image of $f$ meets every connected component of $Z_{f}$.
              \par
    \end{itemize}
\end{thm}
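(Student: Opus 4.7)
The plan is to build $R_{Y}$ in two reduction stages, starting from the natural resolution (Definition \ref{nat res}) of $Y$ and enlarging each codomain by successive $4g$-gon annexations. For the first stage, I would define a canonical BR-closure operation $Z \mapsto Z^{\text{BR}}$: whenever $Z$ has a long block or long chain on its boundary, the adjacent $4g$-gon is uniquely determined in any ambient full cover $\hat{X}$ (since a sub-word of the relation of length $>2g$ fixes the remaining edges), so annexing it is forced in the sense that every embedding $Z \hookrightarrow \hat{X}$ factors through the annexation. Iterated annexation terminates because each step strictly decreases the total boundary length: a length-$k$ boundary segment with $k > 2g$ is replaced by the opposing $4g-k < k$ edges of the $4g$-gon. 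Composing each surjection $f : Y \twoheadrightarrow Z_{f}$ in the natural resolution with its BR-closure yields a finite collection of morphisms $Y \to Z_{f}^{\text{BR}}$ with $Z_{f}^{\text{BR}}$ compact and BR, and the image-meets-every-component property is automatic since BR-closure only adds 2-cells attached along existing boundary.

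For the second stage, I would branch over optional annexations of half blocks and half chains. Starting from a BR subcover, for each half block or half chain one may either annex the adjacent $4g$-gon or not; each annexation is then followed by another BR-closure to restore the BR condition. I would declare a node of this branching tree terminal either as soon as the current subcover is SBR, or as soon as further annexation would drop $\chi^{\text{grp}}$ below $\chi_{0}$; taking $R_{Y}$ to consist of the composites from $Y$ to each terminal node then guarantees the desired SBR conclusion whenever $\chi^{\text{grp}}(Z_{f}) \geq \chi_{0}$. Finiteness of the branching tree should follow from a bound on the number of isomorphism classes of BR subcovers with bounded $\chi^{\text{grp}}$ admitting a surjection from $Y$, together with the $\chi_0$ cutoff, which prevents indefinite growth along branches where annexation does not shrink $\chi^{\text{grp}}$.

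The main obstacle is verifying the resolution property itself: that every morphism $h : Y \to \hat{X}$ into a full cover factors \emph{uniquely} through exactly one element of $R_{Y}$. The BR-closure stage is canonical, so uniqueness there is immediate; but for each half-block or half-chain branch point in the SBR stage one must argue that $h$ determines a unique choice (annex vs.\ do not annex) by inspecting whether the adjacent $4g$-gon of $\hat{X}$ lies in the image of $h$. A secondary difficulty is the quantitative finiteness of the branching tree, which will require the careful annexation bookkeeping developed in \cite{MP2}, together with the observation that annexing a half block or chain beyond a certain complexity threshold either closes off a boundary component (terminating locally) or strictly lowers $\chi^{\text{grp}}$ once new relations become forced on $\pi_1^{\text{lab}}$.
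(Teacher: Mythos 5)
The paper does not prove this theorem; it is cited from \cite[Theorem 3.12]{PZ}. But Section \ref{lemprf} records the construction behind it: $R_Y$ is the collection of outputs of the deterministic Growing Process of \cite{MP1}, run on morphisms $Y \to Q$ with $Q$ a full cover. Your proposal diverges exactly where the argument is delicate, and the divergence opens a real gap. You propose to \emph{branch} at each half block or half chain (annex or do not annex) and take $R_Y$ to be the set of terminal nodes. This cannot satisfy the uniqueness clause of the resolution property. A full cover $\hat{X}$ has no boundary, so the $4g$-gon bordering any half block of the current subcover always exists in $\hat{X}$; if $R_Y$ contains both the pre-annexation $Z$ and the post-annexation $Z' \supsetneq Z$, then a given $h : Y \to \hat{X}$ factors as an embedding through both, and the factorization is not unique. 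Your proposed tie-break---check whether the adjacent $4g$-gon lies in $\text{Im}(h)$---does not repair this: that $4g$-gon essentially never lies in $\text{Im}(h)$ (annexation by definition adjoins cells outside the image of $Y$), so this rule would systematically choose ``don't annex,'' leaving terminal $Z_f$ with $\chi^{\text{grp}}(Z_f) \geq \chi_0$ that fail to be SBR, contradicting the second bullet.

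The Growing Process sidesteps all of this by making no choices: at each step it annexes \emph{every} $4g$-gon in $\hat{X}$ adjacent to a half block, half chain, or long chain, and stops once the current subcover is SBR or is BR with $\chi^{\text{grp}} < \chi_0$ (already below, not ``about to drop below,'' which as you phrased it would again leave problematic terminal nodes). Determinism is what yields uniqueness: given $h : Y \to \hat{X}$, the process produces a canonical chain $\text{Im}(h) = W_0 \subseteq W_1 \subseteq \cdots \subseteq W_{i_{\text{final}}} \subseteq \hat{X}$ depending only on $\text{Im}(h)$, and this pins down a single element of $R_Y$ through which $h$ factors. Finiteness of $R_Y$ comes from \cite[Lemmas 2.11--2.12]{MP1} on termination and on the bounded number of possible outputs, not from counting surjective codomains as you suggest (the morphisms in $R_Y$ are not surjective in general; only the weaker third bullet holds). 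Your first-stage BR-closure via the forced $4g$-gon adjacent to a long block, and the boundary-length descent, are sound observations for long blocks, but long chains need the more careful bookkeeping of \cite[Section 3]{MP2}, and in any case the whole construction should be folded into one deterministic process rather than a branching tree.
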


The corollary is:
\begin{cor} \label{sp emb res}
    Let $Y$ be a compact subcover
    of $X$ and let $\chi_{0}\in\mathbb{Z}$. Then $Y$ admits a finite
    embedding-resolution $R^{\text{emb}}_{Y}$ with the same three properties as in Theorem \ref{sp res}.
\end{cor}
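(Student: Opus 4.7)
The plan is to deduce this corollary directly from Theorem \ref{sp res} by simply discarding the non-injective morphisms from the resolution it provides. First I would apply Theorem \ref{sp res} to $Y$ with the given $\chi_0$ to obtain a finite resolution $R_Y = \{f : Y \to Z_f\}$ satisfying the three listed properties. I would then define
\[
R^{\text{emb}}_Y \;=\; \{f \in R_Y : f \text{ is an injective morphism of subcovers}\},
\]
and claim that this is the required finite embedding-resolution.

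The main thing to verify is that $R^{\text{emb}}_Y$ really is an embedding-resolution, i.e.\ that every injective morphism $h : Y \hookrightarrow \hat X$ into a full cover $\hat X$ has a unique factorization $Y \stackrel{f}{\hookrightarrow} Z_f \stackrel{\bar h}{\hookrightarrow} \hat X$ with $f \in R^{\text{emb}}_Y$ and $\bar h$ an embedding. For existence, apply the resolution property of $R_Y$ to $h$ to get a unique factorization $h = \bar h \circ f$ with $f \in R_Y$ and $\bar h$ an embedding; since both $h$ and $\bar h$ are injective, $f$ must be injective as well, so $f \in R^{\text{emb}}_Y$. For uniqueness, note that any factorization of $h$ through an element of $R^{\text{emb}}_Y \subseteq R_Y$ is in particular a factorization through an element of $R_Y$, and is therefore unique by the original resolution property.

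The three listed properties carry over for free: compactness and BR-ness are properties of the codomain $Z_f$ (unchanged by the restriction), the conditional SBR property likewise depends only on $Z_f$, and the condition that the image of $f$ meets every connected component of $Z_f$ is a per-morphism condition that is preserved when we pass to a sub-collection. Finiteness of $R^{\text{emb}}_Y$ is immediate since it is a subset of the finite set $R_Y$. So there is no real obstacle; the corollary is essentially the observation that ``taking the injective part'' of a resolution yields an embedding-resolution that inherits every property that is formulated in terms of the target $Z_f$ or of individual morphisms.
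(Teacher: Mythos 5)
Your proposal is correct and is essentially the same construction the paper has in mind: later in the section on the growing process, the paper explicitly identifies $R^{\text{emb}}_Y$ as the set of all \emph{injective} outputs of the growing process, i.e.\ the injective elements of $R_Y$, exactly as you define it. Your verification that the factor $f$ is injective (because $h = \bar h \circ f$ is injective), that uniqueness is inherited from $R_Y$, and that the three properties are per-morphism/per-codomain and thus pass to a subcollection, fills in the details cleanly.
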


\subsubsection{Background for step 2 of the proof}
Theorem ~\ref{sp res} is stated for any $\chi_{0}$, but as in step 2 in the outline of the proof, the $\chi_{0}=0$
case is the most interesting for us as a result of the following statement:

\begin{prop} \label{imp prop} For a compact subcover $Y$ of $X$ with $\chi^{\text{grp}}(Y)=0$ we have:
    \[
        a_{0}(Y)=\#\{f\in R^{\text{emb}}_{Y}|\chi^{\text{grp}}(Z_{f})=0\},
    \]
    where $R^{\text{emb}}_{Y}$ is the finite resolution from Corollary
    ~\ref{sp emb res} when $\chi_{0}=0$.
    \par
\end{prop}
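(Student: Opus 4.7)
The plan is to expand $\mathbb{E}_{Y}^{\text{emb}}(n)$ using the special embedding-resolution $R^{\text{emb}}_{Y}$ from Corollary \ref{sp emb res} with $\chi_{0}=0$, and then read off $a_{0}(Y)$ from the leading-order contribution. Since $\chi^{\text{grp}}(Y)=0$, Theorem \ref{asym exp} gives $\mathbb{E}_{Y}^{\text{emb}}(n) = a_{0}(Y) + O(1/n)$. On the other hand, Lemma \ref{res and exp} yields
\[
    \mathbb{E}_{Y}^{\text{emb}}(n) = \sum_{f \in R^{\text{emb}}_{Y}} \mathbb{E}_{Z_{f}}^{\text{emb}}(n),
\]
while for each $f$ individually one has $\mathbb{E}_{Z_{f}}^{\text{emb}}(n) = n^{\chi^{\text{grp}}(Z_{f})}\bigl(a_{0}(Z_{f}) + O(1/n)\bigr)$. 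The goal is to match the constant terms: to show the right-hand sum equals $\#\{f \in R^{\text{emb}}_{Y} : \chi^{\text{grp}}(Z_{f})=0\} + O(1/n)$.

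The key step, which I expect to be the main obstacle, is ruling out $f \in R^{\text{emb}}_{Y}$ with $\chi^{\text{grp}}(Z_{f}) \geq 1$, since such a term would otherwise blow up the sum. The argument goes through a soft positivity-and-boundedness trick: each $\mathbb{E}_{Z_{f}}^{\text{emb}}(n)$ is a non-negative expected count and so is bounded above by $\mathbb{E}_{Y}^{\text{emb}}(n) = O(1)$. If some $f$ had $\chi^{\text{grp}}(Z_{f}) \geq 1$, the defining property of $R^{\text{emb}}_{Y}$ at $\chi_{0}=0$ would force $Z_{f}$ to be SBR, hence $a_{0}(Z_{f}) = 1$ by Theorem \ref{a zero one}, and then $\mathbb{E}_{Z_{f}}^{\text{emb}}(n) = n^{\chi^{\text{grp}}(Z_{f})}(1 + O(1/n))$ would grow without bound, a contradiction. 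Hence every $f$ satisfies $\chi^{\text{grp}}(Z_{f}) \leq 0$.

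With that established, the summation splits cleanly into two regimes. For each $f$ with $\chi^{\text{grp}}(Z_{f}) = 0$, the resolution again forces $Z_{f}$ to be SBR, so $a_{0}(Z_{f}) = 1$ by Theorem \ref{a zero one} and the summand contributes $1 + O(1/n)$. For each $f$ with $\chi^{\text{grp}}(Z_{f}) \leq -1$, the asymptotic expansion gives $\mathbb{E}_{Z_{f}}^{\text{emb}}(n) = O(1/n)$ directly. Since $R^{\text{emb}}_{Y}$ is a finite set, summing the two contributions and equating with $a_{0}(Y) + O(1/n)$ yields the proposition. Everything beyond the exclusion of large $\chi^{\text{grp}}(Z_{f})$ is routine asymptotic bookkeeping, so the heart of the argument is really the interplay between the non-negativity of embedding counts, the boundedness supplied by $\chi^{\text{grp}}(Y)=0$, and the SBR property tailored to the threshold $\chi_{0}=0$.
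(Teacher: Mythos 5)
Your argument is correct, and it reaches the proposition by a genuinely different route at the one nontrivial juncture. The paper rules out $f \in R^{\text{emb}}_{Y}$ with $\chi^{\text{grp}}(Z_{f}) > 0$ structurally: it invokes Lemma~\ref{EC lem} (an embedding of compact subcovers whose image meets every component can only decrease $\chi^{\text{grp}}$) together with the third bullet of Theorem~\ref{sp res}, directly giving $\chi^{\text{grp}}(Z_{f}) \leq \chi^{\text{grp}}(Y) = 0$. You instead run a soft positivity-and-boundedness contradiction: each $\mathbb{E}_{Z_{f}}^{\text{emb}}(n)$ is a non-negative summand in $\mathbb{E}_{Y}^{\text{emb}}(n) = O(1)$, so it cannot grow, yet the SBR guarantee (second bullet of Theorem~\ref{sp res} at threshold $\chi_{0}=0$) forces $a_{0}(Z_{f})=1$ whenever $\chi^{\text{grp}}(Z_{f}) \geq 1$, which would make $\mathbb{E}_{Z_{f}}^{\text{emb}}(n) \asymp n^{\chi^{\text{grp}}(Z_{f})}$ blow up. Both arguments are sound. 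Yours trades the topological input (Lemma~\ref{EC lem}) for an analytic one and is in that sense more self-contained given the asymptotic expansion; the paper's is cleaner in that it establishes $\chi^{\text{grp}}(Z_{f}) \leq 0$ a priori, without needing to invoke the SBR implication for the hypothetical case $\chi^{\text{grp}}(Z_{f}) \geq 1$. Once the sign constraint is in hand, the remaining bookkeeping (SBR at $\chi^{\text{grp}}=0$ gives $a_{0}=1$, negative $\chi^{\text{grp}}$ gives $O(1/n)$, finiteness of the resolution lets you sum) is the same in both.
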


To prove Proposition \ref{imp prop} we need the following lemma which gives an important relation between subcover embeddings and $\chi^{\text{grp}}$.
\begin{lem} [Lemma 3.14 of \cite{PZ}] \label{EC lem}
    If $f:Y\hookrightarrow Z$ is an embedding of compact subcovers of $X$ such that the image of $f$ meets every connected component of $Z$, then $\chi^{\text{grp}}(Z)\leq \chi^{\text{grp}}(Y)$
\end{lem}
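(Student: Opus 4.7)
The plan is to reduce the problem to surface topology by thickening and filling in relation cycles. First, I would reduce to the case of connected $Z$: since $\chi^{\text{grp}}$ is additive over connected components and the hypothesis that the image of $f$ meets every component of $Z$ ensures each component of $Z$ receives at least one component of $Y$, the inequality follows componentwise. Assume then that $Z$ is connected and write $Y = \bigsqcup_l Y_l$.

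For any compact subcover $W$, I would introduce the completion $W^{+}$: the subcover obtained from $W$ by attaching a $4g$-gon to every closed path in the $1$-skeleton of $W$ that reads the relation $[a_1,b_1]\cdots[a_g,b_g]$ and is not already a $2$-cell boundary of $W$. This is still a finite compact subcover. The key claim is that the natural map $\pi_1(W^{+}) \to \Gamma$ is an isomorphism onto $\pi_1^{\text{lab}}(W)$. The idea: every loop in $W$ null-homotopic in $\Gamma$ admits a van Kampen diagram whose $2$-cells are $4g$-gons reading the relation; using the defining subcover property that every path in $W$ spelling the identity in $\Gamma$ is closed, one argues inductively on the area of the diagram that each intermediate relation cycle closes up in the $1$-skeleton of $W$ and is therefore present in $W^{+}$, so that the whole diagram lifts to $W^{+}$. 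Consequently, the thickening $T(W^{+})$ is a compact surface (possibly with boundary) with fundamental group $\pi_1^{\text{lab}}(W)$, and its topological Euler characteristic equals $\chi^{\text{grp}}(W)$.

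The embedding $Y \hookrightarrow Z$ then extends to $Y^{+} \hookrightarrow Z^{+}$ (any relation cycle in $Y$ is a relation cycle in $Z$), so on thickenings $T(Y^{+})$ sits as a sub-surface of $T(Z^{+})$, meeting its unique component. Let $U = \overline{T(Z^{+}) \setminus T(Y^{+})}$. Treating the shared boundary as a disjoint union of circles (contributing $\chi = 0$), additivity of Euler characteristic gives
\[
\chi_{\text{top}}(T(Z^{+})) = \chi_{\text{top}}(T(Y^{+})) + \chi_{\text{top}}(U).
\]
Each component of $U$ is a compact surface with non-empty boundary, hence has $\chi_{\text{top}} \leq 1$ with equality iff it is a disk. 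Suppose for contradiction that some component of $U$ is a disk with boundary circle $C \subseteq \partial T(Y^{+})$. Since $C$ is null-homotopic in $T(Z^{+})$, the word it spells in $\Gamma$ is trivial, so by the identification $\pi_1(T(Y^{+})) = \pi_1^{\text{lab}}(Y)$ the loop $C$ is also null-homotopic in $T(Y^{+})$ and bounds a disk there. Gluing the two disks along $C$ yields an embedded $2$-sphere in the component of $T(Z^{+})$ containing $C$, forcing that component to be a sphere; but a subcover of the hyperbolic $X$ has no spherical components, a contradiction. Hence $\chi_{\text{top}}(U) \leq 0$ and
\[
\chi^{\text{grp}}(Z) = \chi_{\text{top}}(T(Z^{+})) \leq \chi_{\text{top}}(T(Y^{+})) = \chi^{\text{grp}}(Y).
\]

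The main obstacle is the claim $\pi_1(W^{+}) \cong \pi_1^{\text{lab}}(W)$ used in the completion step: one must show that every relation $2$-cell appearing in a van Kampen diagram for a loop null-homotopic in $\Gamma$ corresponds to a relation-reading closed path in the $1$-skeleton of $W$, which rests on the "every path spelling the identity is closed" clause in the definition of a subcover. A secondary subtlety is the case when $T(Y^{+})$ shares arcs (and not just circles) with $\partial T(Z^{+})$, where the additivity computation requires a slightly more careful Mayer--Vietoris accounting at the arc endpoints.
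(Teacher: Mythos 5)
First, note that the paper itself does not prove this lemma: it is quoted as \cite[Lemma 3.14]{PZ}, so your argument can only be measured against the literature it cites --- and it has a genuine gap at its central step, namely the claim that for a compact subcover $W$ the completion $W^{+}$ (fill every relator-reading closed path of the $1$-skeleton with a $4g$-gon) satisfies $\pi_{1}(W^{+})\cong\pi_{1}^{\text{lab}}(W)$. This is false. Take $W$ to be the boundary cycle of the union of two adjacent $4g$-gons in the universal cover $\tilde{X}$ (the Cayley complex of $\Gamma$): it is a compact subcover, being a subcomplex of a covering space of $X$; its $1$-skeleton is a single embedded circle of length $8g-2$; and the cyclic word it reads is a product of two conjugates of the relator, hence trivial in $\Gamma$, so $\pi_{1}^{\text{lab}}(W)$ is trivial and $\chi^{\text{grp}}(W)=1$. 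On the other hand, the relator is cyclically reduced, so a path reading it cannot backtrack, and a non-backtracking closed path in a circle of length $8g-2$ has length a multiple of $8g-2\neq 4g$; hence $W$ contains no relator-reading closed path at all, $W^{+}=W$, and $\pi_{1}(W^{+})\cong\mathbb{Z}$ merely surjects onto $\pi_{1}^{\text{lab}}(W)$. In particular the topological Euler characteristic of the thickening is $0$ while $\chi^{\text{grp}}(W)=1$, so the identification on which your whole chain $\chi^{\text{grp}}(Z)=\chi(T(Z^{+}))\leq\chi(T(Y^{+}))=\chi^{\text{grp}}(Y)$ rests is false, and it fails precisely in the direction needed on the $Z$-side (one would need $\chi^{\text{grp}}(Z)\leq\chi(T(Z^{+}))$, which the example violates).

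The reason the van Kampen induction does not get off the ground is that the interior edges of a van Kampen diagram for a word trivial in $\Gamma$ need not be present in the $1$-skeleton of $W$. The defining property of a subcover (``every path in $W$ reading the identity is closed'') constrains paths that already lie in $W$; it does not create the missing interior edges, so the ``intermediate relation cycles'' you propose to fill are simply not paths in $W$ --- in the example above the diagram is exactly the two polygons, and the shared edge is absent from $W$. This is exactly the difficulty that forces the machinery actually used in \cite{PZ} and \cite{MP1,MP2}: boundary reduction, half/long blocks and chains, the growing process and core surfaces, in which one annexes $4g$-gons of an ambient full cover along such configurations, thereby adding new vertices and edges rather than only $2$-cells, before $\chi^{\text{grp}}$ can be read off topologically. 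Your second step (the subsurface count ruling out disk components of the complement) is reasonable in itself, but it too invokes $\pi_{1}$-injectivity of the thickened completions, so the proof as a whole does not stand without the false claim.
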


\begin{proof}[Proof of Proposition \ref{imp prop}]
    Let $R^{\text{emb}}_{Y}$ be the finite embedding-resolution from Corollary ~\ref{sp emb res} when $\chi_{0}=0$. From Lemma \ref{EC lem} and the third bulletin of
    Theorem \ref{sp res} we know that $\chi^{\text{grp}}(Z_{f})\leq \chi^{\text{grp}}(Y)=0$ for all $f:Y\hookrightarrow Z_{f}$ in $R^{\text{emb}}_{Y}$.
    Using the asymptotic expansion (Theorem \ref{asym exp}) and Lemma \ref{res and exp} we have:

    \[
        \begin{split}
            a_{0}(Y)+O(1/n) & = \sum_{f\in R^{\text{emb}}_{Y}} \mathbb{E}^{\text{emb}}_{Z_{f}}(n)                          \\
                            & =\sum_{f\in R^{\text{emb}}_{Y}} n^{\chi^{\text{grp}}(Z_{f})}\left[a_{0}(Z_{f})+O(1/n)\right] \\
                            & = \sum_{f\in R^{\text{emb}}_{Y} : \chi^{\text{grp}}(Z_{f}) = 0} a_{0}(Z_{f}) +O(1/n)         \\
                            & = \#\{f\in R^{\text{emb}}_{Y} : \chi^{\text{grp}}(Z_{f})=0\} +O(1/n),
        \end{split}
    \]
    with the last equality following from Theorem \ref{a zero one} and the second bulletin of Theorem \ref{sp res}. Letting $n\to \infty$ we are done.

\end{proof}

\subsubsection{Background for step 3 of the proof}
To complete step 3 as in the outline we need an explicit form of the embedding-resolution as in Corollary \ref{sp emb res}.
As we shall see, for a compact subcover $Y$ of $X$ the elements of the special resolution in Theorem \ref{sp res} are outputs of a certain algorithm which Magee and Puder dub 'The Growing Process' in \cite{MP1}.
The algorithm takes as input a subcover morphism $f:Y\to Z$ where $Y$ is compact and $Z$ is without boundary, that is, $Z$ is a full cover of $X$, and spits out a morphism $g:Y\to W$ for $W\subseteq Z$ such that either $W$ is SBR, or $W$ is BR with $\chi(W)<0$ ($\chi$ being the normal topological Euler Characteristic).
The algorithm is as follows:
\begin{algorithm}[H]
    \textbf{Growing Process}:
    \par
    For a subcover morphism $f:Y\to Z$, with $Y$ compact and $Z$ without
    boundary:
    \par
    \begin{enumerate}
        \item
              Initialize $W_{0}=Im(f)$ and $i=0$.
              \par
        \item
              If $W_{i}$ is SBR, or is BR with $\chi(W_{i})<0$, terminate and return \newline
              $g:Y\to W_{i}$.
              \par
        \item \raggedright{}Obtain $W_{i+1}$ from $W_{i}$ by adding to $W_{i}$
              (the closure of) every $4g$-gon in $Z\backslash W_{i}$ which touches
              along its boundary an edge of $\partial W_{i}$ which is part of a
              half-block (this includes the case of a long block), a long-chain
              or a half-chain. Set $i = i+1$ and return to 2.
    \end{enumerate}
\end{algorithm}

Lemma 2.11 of \cite{MP1} states that this process always terminates.
With this terminology in mind, the special resolution $R_{Y}$ of a compact subcover $Y$ as in Theorem \ref{sp res} with $\chi_{0} = 0$ is exactly all possible outputs of the growing process.
Note that if $Y$ is compact then there are finitely many possible outputs of the growing process, see \cite[Lemma 2.12]{MP1}.
In addition, the special embedding-resolution $R^{\text{emb}}_{Y}$ of a compact subcover $Y$ as in Corollary \ref{sp emb res} with $\chi_{0} = 0$ is the set of all possible $\textbf{injective}$ outputs of the growing process.

For a subcover morphism $h:Y\to Q$ where $Y$ is compact and $Q$ without boundary,
we set $W_{i}(h)$ to be the subcover $W_{i}$ in the $i$'th iteration of the growing process on $h$.
We also denote by $i_{\text{final}}(h)$ the $i$ on which the growing process on $h$ terminated. In particular $W_{i_{\text{final}}(h)}(h)$ is always SBR or
is BR with $\chi\left(W_{i_{\text{final}}(h)}(h)\right) < 0$.
Before diving into the proof we make a key observation that enables our proof to work.

\begin{prop}[Key Observation]\label{key obs}
    Let $Y$ be a compact subcover and $Q$ without boundary. In addition let $h:Y\hookrightarrow Q$ be an embedding.
    Suppose that there exists a compact $Z\subseteq Q$ such that $W_{i_{\text{final}}(h)}(h)\subseteq Z$.
    Also assume that $\chi^{\text{grp}}(Y) = 0$, that $\chi^{\text{grp}}(Z)=0$ and that $Im(h)$ meets all of $Z$'s components.
    Then for all $i\leq i_{\text{final}}(h)$ we have:
    \[
        \chi^{\text{grp}}(W_{i}(h))=0.
    \]
    In particular, $W_{i_{\text{final}}(h)}(h)$ is SBR.
\end{prop}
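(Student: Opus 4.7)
The plan is to sandwich $\chi^{\text{grp}}(W_i(h))$ between $\chi^{\text{grp}}(Y)=0$ and $\chi^{\text{grp}}(Z)=0$ by two applications of Lemma \ref{EC lem}. The key structural observation is that the growing process only adds $4g$-gons attached along existing boundary edges, so every connected component of $W_i(h)$ contains at least one connected component of $W_0(h) = \text{Im}(h) \cong Y$ (no new components can spawn out of nothing; components may only grow, or at most merge).

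For any $i \leq i_{\text{final}}(h)$, the subcover $W_i(h)$ is compact, being a subcomplex of $W_{i_{\text{final}}(h)}(h) \subseteq Z$. We thus have a chain of embeddings of compact subcovers
\[
    Y \cong W_0(h) \hookrightarrow W_i(h) \hookrightarrow Z.
\]
By the observation above, the first embedding meets every connected component of $W_i(h)$. The second meets every component of $Z$, since $\text{Im}(h) \subseteq W_i(h)$ meets every component of $Z$ by hypothesis. Applying Lemma \ref{EC lem} to each embedding in turn yields
\[
    0 = \chi^{\text{grp}}(Z) \leq \chi^{\text{grp}}(W_i(h)) \leq \chi^{\text{grp}}(Y) = 0,
\]
which forces $\chi^{\text{grp}}(W_i(h)) = 0$ for every $i \leq i_{\text{final}}(h)$.

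For the final assertion that $W_{i_{\text{final}}(h)}(h)$ is SBR, I would argue by elimination. By construction the growing process halts only when the current subcover is either SBR, or BR with strictly negative topological Euler characteristic. To rule out the second possibility I would invoke the standard fact (within the framework of \cite{MP2, MP1}) that for a compact BR subcover the topological Euler characteristic $\chi$ coincides with $\chi^{\text{grp}}$: BR forces the map $\pi_{1}\to\Gamma$ to be injective, and the complex is a deformation retract of its thick version, a surface with boundary whose $\chi$ equals $1-\text{rank}(\pi_{1})$ on each component. Combined with the already-established equality $\chi^{\text{grp}}(W_{i_{\text{final}}(h)}(h)) = 0$, this gives $\chi(W_{i_{\text{final}}(h)}(h)) = 0$, which is not negative, so termination must be due to the SBR criterion.

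The sandwich argument itself is essentially a direct corollary of Lemma \ref{EC lem} and is not the hard part. The main obstacle I anticipate is precisely the last step: pinning down that the topological $\chi$ used in the termination criterion of the growing process agrees with $\chi^{\text{grp}}$ on BR subcovers. This identification is not stated explicitly in the excerpt and must be invoked from the background of \cite{MP2,MP1}, but once it is in hand the proof closes cleanly.
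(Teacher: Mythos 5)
Your proof is correct and follows essentially the same route as the paper: the chain $W_0(h) \subseteq W_i(h) \subseteq W_{i_{\text{final}}(h)}(h) \subseteq Z$, two applications of Lemma \ref{EC lem} (after checking the "meets every component" hypothesis in both directions) to sandwich $\chi^{\text{grp}}(W_i(h))$ between $0$ and $0$, and then the identification $\chi = \chi^{\text{grp}}$ on BR subcovers (which the paper cites from the paragraph following Theorem 3.12 of \cite{PZ}) to rule out the BR-with-$\chi<0$ termination branch. The one "obstacle" you flag at the end is precisely the background fact the paper invokes, so no gap remains.
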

\begin{proof}
    The proof is rather simple. Note that:
    \[
        Y\hookrightarrow Im(h)=W_{0}(h)\subseteq W_{1}(h)\subseteq...\subseteq W_{i_{\text{final}}(h)}(h) \subseteq Z.
    \]
    In addition, note that $Y$ being compact implies that all $W_{i}(h)$ are compact as well. Also, by definition of the growing process, it is easy to see that $Im(h)$ meets all components of  $W_{i}(h)$ for all $i$. Lemma \ref{EC lem} implies that
    \[
        \chi^{\text{grp}}(W_{i}(h)) \leq \chi^{\text{grp}}(Y) = 0,
    \]
    for all $i$.

    On the other hand, as $Im(h)$ meets all components of $Z$, for all $i$ there is a morphism which meets all components of $Z$ given by:
    \[
        W_{i}(h)\hookrightarrow W_{i_{\text{final}}(h)}(h) \hookrightarrow Z.
    \]
    Lemma \ref{EC lem} yields
    \[
        0 = \chi^{\text{grp}}(Z) \leq \chi^{\text{grp}}(W_{i}(h)).
    \]
    Combining the two inequalities we see that $\chi^{\text{grp}}(W_{i}(h)) = 0$ for all $i$, and in particular $\chi^{\text{grp}}(W_{i_{\text{final}}(h)}(h)) = 0$.

    From \cite[paragraph following Theorem 3.12]{PZ} we know that $\chi = \chi^{\text{grp}}$ for BR subcovers, which implies $\chi \left(W_{i_{\text{final}}(h)}(h)\right) = 0$.
    By definition, the growing process can not terminate when $W_{i_{\text{final}}(h)}(h)$ is BR and $\chi \left(W_{i_{\text{final}}(h)}(h)\right) = 0$ which means that  $W_{i_{\text{final}}(h)}(h)$ is SBR.
\end{proof}

\subsection{The Proof}
Let $R^{\text{emb}}_{E}$ be the embedding-resolution of $E$ as in Corollary \ref{sp emb res} with $\chi_{0} = 0$, and similarly let $R^{\text{emb}}_{E_{\gamma}}$ and $R^{\text{emb}}_{E'}$ be the special embedding-resolutions of $E_{\gamma},E'$ as in Corollary \ref{sp emb res} with $\chi_{0} = 0$ respectively. Denote by $R^{\text{emb}}_{E}(0)$ the subset $\{h\in R^{\text{emb}}_{E} | \chi^{\text{grp}}(Z_{h})=0\}$ of $R^{\text{emb}}_{E}$, and similarly define $R^{\text{emb}}_{E_{\gamma}}(0)$ and $R^{\text{emb}}_{E'}(0)$.
Proposition \ref{imp prop} gives
\[
    a_{0}(E)=\#R^{\text{emb}}_{E}(0)\;\;\;a_{0}(E_{\gamma}) = \#R^{\text{emb}}_{E_{\gamma}}(0) \;\;\;a_{0}(E') = \#R^{\text{emb}}_{E'}(0).
\]
Thus, it suffices to show that:
\begin{enumerate}
    \item[\textbf{Part 1}]
          Every disjoint union $f\sqcup g$ for $f\in R^{\text{emb}}_{E_{\gamma}}(0)$ and $g\in R^{\text{emb}}_{E'}(0)$ belongs to $R^{\text{emb}}_{E}(0)$.
    \item[\textbf{Part 2}]
          Every $h\in R^{\text{emb}}_{E}(0)$ can be written as $h=f\sqcup g$ for $f\in R^{\text{emb}}_{E_{\gamma}}(0)$ and $g\in R^{\text{emb}}_{E'}(0)$.
\end{enumerate}

\begin{proof}[\textbf{Part 1}]
    Let $f\in R^{\text{emb}}_{E_{\gamma}}(0)$ and  $g\in R^{\text{emb}}_{E'}(0)$ and write $f:E_{\gamma} \hookrightarrow Z_{f}$ and \newline$g:E'\hookrightarrow Z_{g}$.
    By definition of $R^{\text{emb}}_{E_{\gamma}}$ and $R^{\text{emb}}_{E'}$ there are morphisms \newline$h_{f}:E_{\gamma} \hookrightarrow Q_{\gamma}$ and $h_{g}:E' \hookrightarrow Q'$ such that $Q_{\gamma}$ and $Q'$ are without boundary and such that $f$ and $g$ are the output of the growing process on $h_{f}$ and $h_{g}$ respectively.

    Consider the morphism $h_{f\sqcup g}$ defined as
    \[
        h_{f}\sqcup h_{g}:E = E_{\gamma}\sqcup E' \hookrightarrow Q \stackrel{def}{=} Q_{\gamma}\sqcup Q'.
    \]
    Let us examine what happens when we run the growing process on $h_{f\sqcup g}$.

    Suppose that $i_{\text{final}}(h_{f})\leq i_{\text{final}}(h_{g})$. Because the process is deterministic, this implies that for all $i\leq i_{\text{final}}(h_{f})$ we have:
    \[
        W_{i}(h_{f\sqcup g}) = W_{i}(h_{f})\sqcup W_{i}(h_{g}).
    \]
    Let us consider what happens at time $i_{\text{final}}(h_{f})$. By definition of $h_{f}$, we know that at time $i_{\text{final}}(h_{f})$ the subcover $W_{i_{\text{final}}(h_{f})}(h_{f}) = Z_{f}$ is SBR. This implies that there are no half-blocks, half-chains, or long chains at the boundary of $W_{i_{\text{final}}(h_{f})}(h_{f})$. Thus, as the process continues, step 3 does not change $W_{i_{\text{final}}(h_{f})}(h_{f}) = Z_{f}$ so that for $i\geq i_{\text{final}}(h_{f})$
    \[
        W_{i}(h_{f\sqcup g}) = W_{i_{\text{final}}(h_{f})}(h_{f})\sqcup W_{i}(h_{g}) = Z_{f} \sqcup W_{i}(h_{g}).
    \]

    Note that for all $i\geq i_{\text{final}}(h_{f})$ we have $\chi^{\text{grp}}(Z_{f}) = \chi(Z_{f}) = 0$, where the equality $\chi^{\text{grp}}(Z_{f}) = \chi(Z_{f})$ follows from the fact that $Z_{f}$ is SBR. Thus for all $i\geq i_{\text{final}}(h_{f})$ we have
    \[
        \chi\left(W_{i}(h_{f\sqcup g})\right) = \chi\left(W_{i}(h_{g})\right).
    \]

    A general compact subcover is SBR (BR) if and only if all of its components are SBR, respectively BR, thus the condition that $W_{i}(h_{f\sqcup g})$ is SBR for $i\geq i_{\text{final}}(h_{f})$ is equivalent to the condition that $W_{i}(h_{g})$ is SBR. Therefore, the terminating conditions for the process on $h_{f\sqcup g}$ are the same terminating conditions as for the process on $h_{g}$, and in particular depend only on $W_{i}(h_{g})$. As $W_{i}(h_{f\sqcup g}) = Z_{f} \sqcup W_{i}(h_{g})$ for $i\geq i_{\text{final}}(h_{f})$, the process on $h_{f\sqcup g}$ must terminate in $i_{\text{final}}(h_{g})$ steps and output
    \[
        Z_{f}\sqcup W_{i_{\text{final}}(h_{g})}(h_{g}) = Z_{f} \sqcup Z_{g}.
    \]

    The morphism $f\sqcup g$ is thus an output of the growing process on $E$ and so is an element of $R_{E}$.
    As $f$ and $g$ are injective the map $f\sqcup g$ is in $R^{\text{emb}}_{E}$. As $\chi^{\text{grp}}$ is additive we have $f\sqcup g \in R^{\text{emb}}_{E}(0)$.
    The case where $i_{\text{final}}(h_{g})\leq i_{\text{final}}(h_{f})$ is symmetric.
\end{proof}

\begin{proof}[\textbf{Part 2}]
    Let $h\in R^{\text{emb}}_{E}(0)$ and write $h:E\hookrightarrow Z$. By the third bulletin of Theorem \ref{sp res} and the characterization of $R^{\text{emb}}_{E}$, the image of $h$ meets every component of $Z$. Every component of $Z$ either has a power of $\gamma$ in its $\pi_{1}^{\text{lab}}$ (up to conjugation) or does not. Write $Z = Z_{\gamma} \sqcup Z'$ where $Z_{\gamma}$ is the sub-subcover of $Z$ comprised of all the components of $Z$ which have a non-zero power of $\gamma$ in their $\pi_{1}^{\text{lab}}$, and $Z'$ is the rest of the components, i.e. $Z'$ is the sub-subcover of $Z$ comprised of all the components of $Z$ which do not have a non-zero power of $\gamma$ in their $\pi_{1}^{\text{lab}}$.

    As the image of $h$ meets all the components of $Z$, we know that for a component $D\subseteq Z$ there is a component $C\subseteq E$ such that $h(C)\subseteq D$. By our assumption on $E$ we know $\chi^{\text{grp}}(C)=0$ so that  by Lemma \ref{EC lem} we have $\chi^{\text{grp}}(D) \leq \chi^{\text{grp}}(C) = 0$. As $\chi^{\text{grp}}(Z) =0$ and by the fact that $\chi^{\text{grp}}$ is additive we conclude that $\chi^{\text{grp}}(D) = 0$ for every component $D\subseteq Z$. In particular $\chi^{\text{grp}}(Z_{\gamma}) = \chi^{\text{grp}}(Z') = 0$.

    Lemma \ref{subgrp lem} implies that $h(E_{\gamma}) \subseteq Z_{\gamma}$ and $h(E')\subseteq Z'$.
    For the first inclusion, if $h(E_{\gamma}) \cap Z' \neq \emptyset$ then there is some component $C\subseteq E_{\gamma}$ and a component $D\subseteq Z'$ such that $h(C)\subseteq D$. As $\gamma^{k} \in \pi_{1}^{\text{lab}}(C)$ for some $k\in \mathbb{N}$ we have by Lemma \ref{subgrp lem} that $\gamma^{k} \in \pi_{1}^{\text{lab}}(D)$ which is a contradiction to $D\subseteq Z'$.

    As for the second inclusion $h(E')\subseteq Z'$, if $h(E') \cap Z_{\gamma} \neq \emptyset$ then as before there is some component $C\subseteq E'$ and a component $D\subseteq Z_{\gamma}$ such that $h(C)\subseteq D$.
    As  $D\subseteq Z_{\gamma}$ there is some $k\in \mathbb{N}$ for which $\gamma^{k}\in \pi_{1}^{\text{lab}}(D)$ (up to conjugation) while Lemma \ref{subgrp lem} together with $h(C)\subseteq D$ implies that $\pi_{1}^{\text{lab}}(D)$ contains an element that is not a conjugate of any power of $\gamma$. From this, we deduce that the conjugacy class of subgroups which is $\pi_{1}^{\text{lab}}(D)$ cannot be a conjugacy class of infinite cyclic subgroups. However, $\chi^{\text{grp}}(D) = 0$ implies that $\pi_{1}^{\text{lab}}(D)$ is a conjugacy class of infinite cyclic subgroups of $\Gamma$ which is a contradiction.

    Note that $h(E_{\gamma}) \subseteq Z_{\gamma}$ and $h(E')\subseteq Z'$ implies $h^{-1}(Z_{\gamma})\subseteq E_{\gamma}$ and \newline$h^{-1}(Z')\subseteq E'$, as $E = E_{\gamma} \sqcup E'$. This implies one can write $h = f\sqcup g$ for:
    \[
        f = h|_{E_{\gamma}} : E_{\gamma} \hookrightarrow Z_{\gamma} \;\;\;\;\; g = h|_{E'} : E' \hookrightarrow Z'.
    \]

    We are almost done: all that remains to show is that both $f$ and $g$ are the outputs of the growing process for some morphisms $E_{\gamma}\hookrightarrow Q_{\gamma}$ and $E'\hookrightarrow Q'$.
    The morphisms we use are the natural ones. As $h\in R^{\text{emb}}_{E}$, it was returned by the growing process on a morphism $r:E\hookrightarrow Q$ where $Q$ is without boundary,
    thus we let $r_{f} = r|_{E_{\gamma}} : E_{\gamma} \hookrightarrow Q$ and $r_{g} = r|_{E'} : E' \hookrightarrow Q$.

    Let us show that $W_{i_{\text{final}}(r_{f})}(r_{f}) = Z_{\gamma}$ and $W_{i_{\text{final}}(r_{g})}(r_{g}) = Z'$. To this end, let us first show that
    \[
        i_{\text{final}}(r_{f}),i_{\text{final}}(r_{g})\leq i_{\text{final}}(r).
    \]

    Consider the growing process on $r$. The process gives rise to an increasing sequence of subcovers
    \[
        Im(r) = W_{0}(r) \subseteq W_{1}(r)\subseteq ... \subseteq W_{i_{\text{final}}(r)}(r) = Z_{\gamma}\sqcup Z'.
    \]
    This shows that for all $i\leq i_{\text{final}}(r)$ we have:
    \[
        W_{i}(r) = \left(W_{i}(r) \cap Z_{\gamma}\right) \sqcup \left(W_{i}(r) \cap Z'\right).
    \]

    We know that $Z_{\gamma}\sqcup Z'$ is SBR, thus so are $Z_{\gamma}$ and $Z'$.
    In addition\newline $W_{i_{\text{final}}(r)}(r) = Z_{\gamma}\sqcup Z'$ so that
    \[
        W_{i_{\text{final}}(r)}(r) \cap Z_{\gamma} = Z_{\gamma},
    \]
    and
    \[
        W_{i_{\text{final}}(r)}(r) \cap Z'= Z'.
    \]

    Note that
    \[
        W_{0}(r)\cap Z_{\gamma}= Im(r)\cap Z_{\gamma} = Im(r_{f})  =  W_{0}(r_{f}),
    \]
    and
    \[
        W_{0}(r)\cap Z' = Im(r)\cap Z' = Im(r_{g}) = W_{0}(r_{g}).
    \]
    Thus, due to the process being deterministic, after at most $i_{\text{final}}(r)$ steps the processes on $r_{f}$ and $r_{g}$
    reach a point where $W_{i}(r_{f})$ and $W_{i}(r_{g})$ are SBR and halt.
    Thus the processes on $r_{f}$ and $r_{g}$ finish possibly sooner, meaning that
    \[
        i_{\text{final}}(r_{f}),i_{\text{final}}(r_{g})\leq i_{\text{final}}(r).
    \]

    Given that the process on $r_{f}$ and on $r_{g}$ finishes not later than the process on $r$, we conclude:
    \[
        W_{i_{\text{final}}(r_{f})}(r_{f})\subseteq Z_{\gamma} \;\;\;\;\; W_{i_{\text{final}}(r_{g})}(r_{g})\subseteq Z'.
    \]
    Note that $E_{\gamma}$ is compact, has $\chi^{\text{grp}}(E_{\gamma}) = 0$ and that $Im(r_{f})\subseteq Z_{\gamma}$ meets all of $Z_{\gamma}$'s components
    (as the image of $r$ does and $r_{f} = r|_{E_{\gamma}}$). Also note that $Z_{\gamma}$ is compact as well and that $\chi^{\text{grp}}(Z_{\gamma}) = 0$. By the key observation (Lemma \ref{key obs}) we get that $W_{i_{\text{final}}(r_{f})}(r_{f})$ is SBR.
    A similar argument with $Z_{\gamma}$ replaced by $Z'$ shows that $W_{i_{\text{final}}(r_{g})}(r_{g})$ is SBR.

    As the growing process is deterministic and as the processes on $r_{f}$ and $r_{g}$ finish not later than the process on $r$, we conclude that during the process on $r$, the subcovers
    $W_{i}(r) \cap Z_{\gamma}$ and $W_{i}(r) \cap Z'$ reach a point when they are SBR. This happens at times $i_{\text{final}}(r_{f})$ and $i_{\text{final}}(r_{g})$ respectively and at those times:
    \[
        W_{i_{\text{final}}(r_{f})}(r) \cap Z_{\gamma} = W_{i_{\text{final}}(r_{f})}(r_{f}),
    \]
    and
    \[
        W_{i_{\text{final}}(r_{g})}(r) \cap Z' = W_{i_{\text{final}}(r_{g})}(r_{g}).
    \]

    As in part 1, when $W_{i}(r) \cap Z_{\gamma}$ or $W_{i}(r) \cap Z'$ reach a point in the growing process on $r$ when they are SBR, there are no more half-blocks, half-chains, and long chains left to adjoin to them.
    Thus:
    \[
        W_{i_{\text{final}}(r)}(r) = W_{i_{\text{final}}(r_{f})}(r_{f}) \sqcup W_{i_{\text{final}}(r_{g})}(r_{g}),
    \]
    on the other hand
    \[
        W_{i_{\text{final}}(r)}(r) = Z_{\gamma} \sqcup Z'.
    \]
    As $W_{i_{\text{final}}(r_{f})}(r_{f})\subseteq Z_{\gamma}$ and $W_{i_{\text{final}}(r_{g})}(r_{g}) \subseteq Z'$ we get
    \[
        W_{i_{\text{final}}(r_{f})}(r_{f}) =  Z_{\gamma} \;\;\;\;\; W_{i_{\text{final}}(r_{g})}(r_{g}) = Z'.
    \]
    This shows that the morphisms $f:E_{\gamma}\hookrightarrow Z_{\gamma}$ and $g:E'\hookrightarrow Z'$ are results of the growing process on $E_{\gamma}$ and $E'$ respectively.
    In particular $f\in R^{\text{emb}}_{E_{\gamma}}$ and $g\in R^{\text{emb}}_{E'}$, while $\chi^{\text{grp}}(Z_{\gamma}) = \chi^{\text{grp}}(Z') = 0$ implies, by definition, that $f\in R^{\text{emb}}_{E_{\gamma}}(0)$ and $g\in R^{\text{emb}}_{E'}(0)$. In total
    \[
        h = f \sqcup g,
    \]
    for $f\in R^{\text{emb}}_{E_{\gamma}}(0)$ and $g\in R^{\text{emb}}_{E'}(0)$.
\end{proof}

\bibliographystyle{abbrv}
\bibliography{ASYMPTOTIC_INDEPENDENCE.bib}

\begin{thebibliography}{10}

\bibitem{BR2}
M.~V. Berry.
\newblock {Semiclassical mechanics of regular and irregular motion}.
\newblock {\em Les Houches lecture series}, 36:171--271, 1983.

\bibitem{BR1}
M.~V. Berry.
\newblock {Fluctuations in numbers of energy levels}.
\newblock 1986.

\bibitem{BP}
P.~Billingsley.
\newblock {\em {Probability and Measure}}.
\newblock Wiley Series in Probability and Statistics. Wiley, 2012.

\bibitem{Bro}
K.~Brown.
\newblock {\em {Cohomology of Groups}}.
\newblock Graduate Texts in Mathematics. Springer New York, 2012.

\bibitem{HW}
G.~H. Hardy and E.~M. Wright.
\newblock {\em {An introduction to the theory of numbers}}.
\newblock Oxford university press, 1979.

\bibitem{HT}
A.~Hatcher.
\newblock {\em {Algebraic Topology}}.
\newblock Algebraic Topology. Cambridge University Press, 2002.

\bibitem{LS}
M.~W. Liebeck and A.~Shalev.
\newblock {Fuchsian groups, coverings of Riemann surfaces, subgroup growth,
  random quotients and random walks}.
\newblock {\em Journal of Algebra}, 276(2):552--601, 2004.

\bibitem{MPN}
M.~Magee, F.~Naud, and D.~Puder.
\newblock {A random cover of a compact hyperbolic surface has relative spectral
  gap $\frac{3}{16}-\varepsilon$}.
\newblock {\em Geometric and Functional Analysis}, 32(3):595--661, may 2022.

\bibitem{MP2}
M.~Magee and D.~Puder.
\newblock {Core surfaces}.
\newblock {\em Geometriae Dedicata}, 216(4):46, 2022.

\bibitem{MP1}
M.~Magee and D.~Puder.
\newblock {The Asymptotic Statistics of Random Covering Surfaces}.
\newblock {\em Forum of Mathematics, Pi}, 11:e15, 2023.

\bibitem{eigenval}
Y.~Maoz.
\newblock Smooth linear eigenvalue statistics on random covers of compact
  hyperbolic surfaces--a central limit theorem and almost sure rmt statistics.
\newblock {\em arXiv preprint arXiv:2310.18663}, 2023.

\bibitem{Naud}
F.~Naud.
\newblock Random covers of compact surfaces and smooth linear spectral
  statistics.
\newblock In {\em Annales Henri Poincar{\'e}}, pages 1--27. Springer, 2025.

\bibitem{moments}
L.~C. Petersen.
\newblock {On the relation between the multidimensional moment problem and the
  one-dimensional moment problem.}
\newblock {\em Mathematica Scandanavica}, 51:361–366, Jun. 1982.

\bibitem{PZ}
D.~Puder and T.~Zimhoni.
\newblock {Local Statistics of Random Permutations from Free Products}.
\newblock {\em International Mathematics Research Notices}, page rnad207, 09
  2023.

\bibitem{Ru}
Z.~Rudnick.
\newblock Goe statistics on the moduli space of surfaces of large genus.
\newblock {\em Geometric and Functional Analysis}, 33(6):1581--1607, 2023.

\bibitem{RW}
Z.~Rudnick and I.~Wigman.
\newblock On the central limit theorem for linear eigenvalue statistics on
  random surfaces of large genus.
\newblock {\em Journal d'Analyse Math{\'e}matique}, 151(1):293--302, 2023.

\bibitem{RW2}
Z.~Rudnick and I.~Wigman.
\newblock Almost sure goe fluctuations of energy levels for hyperbolic surfaces
  of high genus.
\newblock In {\em Annales Henri Poincar{\'e}}, pages 1--13. Springer, 2025.

\bibitem{carlman}
K.~Schmüdgen.
\newblock {Ten Lectures on the Moment Problem}, 2020.

\bibitem{Sco}
P.~Scott.
\newblock {Subgroups of Surface Groups are Almost Geometric}.
\newblock {\em Journal of the London Mathematical Society}, s2-17(3):555--565,
  1978.

\bibitem{SELB}
A.~Selberg.
\newblock {On the estimation of Fourier coefficients of modular forms}.
\newblock In {\em Proceedings of Symposia in Pure Mathematics}, pages 1--15.
  American Mathematical Society, 1965.

\end{thebibliography}

\end{document}